
\documentclass[10pt]{amsart}
\usepackage{amsmath}
\usepackage{amsfonts}
\usepackage{amssymb}
\usepackage{graphicx}
\usepackage{amsthm,graphicx,color,yfonts}
\usepackage{pdfsync}
\usepackage{epstopdf}
\usepackage{marginnote}
\usepackage{esint}



\usepackage[colorlinks=true]{hyperref}
\hypersetup{linkcolor=red,citecolor=blue,filecolor=dullmagenta,urlcolor=darkblue} 

\addtolength{\hoffset}{-2.5cm}
\addtolength{\textwidth}{4cm}
\addtolength{\voffset}{-1cm}
\addtolength{\textheight}{1.5cm}

\numberwithin{equation}{section}

\theoremstyle{plain}
\newtheorem{theorem}{Theorem}[section]

\newtheorem{lemma}[theorem]{Lemma}
\newtheorem*{de-lemma}{Lemma}

\theoremstyle{remark}

\theoremstyle{definition}

\DeclareMathOperator{\supp}{supp}

\newcommand{\dd}{\mathrm{d}}
\newcommand{\R}{\mathbb{R}}
\newcommand{\SF}{\mathbb{S}}
\newcommand{\n}{\textbf{\em n}}

\providecommand{\MR}{\relax\ifhmode\unskip\space\fi MR }

\providecommand{\href}[2]{#2}

\pagestyle{plain}

\begin{document}

\title[Vortex solutions in the Ginzburg-Landau-Painlev\'e theory of phase transition]{Vortex solutions in the Ginzburg-Landau-Painlev\'e theory of phase transition}

\author{Panayotis Smyrnelis} \address[P.~ Smyrnelis]{Institute of Mathematics,
Polish Academy of Sciences, ul. \'{S}niadeckich 8, 00-656 Warsaw, Poland}
\email[P. ~Smyrnelis]{psmyrnelis@impan.pl}

\subjclass{Primary 35J47; 35J50 35Q56; Secondary 35B40; 35B07.}
\keywords{Painlev\'e equation, Ginzburg-Landau system, vortex, minimizer, liquid crystals.}

\begin{abstract}
The extended Painlev\'e P.D.E. system $\Delta y -x_1 y - 2 |y|^2y=0$, $(x_1,\ldots,x_n)\in \mathbb{R}^n$, $y:\mathbb{R}^n\to\mathbb{R}^m$, is obtained by multiplying by $-x_1$ the linear term of the Ginzburg-Landau equation $\Delta \eta=|\eta|^2\eta-\eta$,  $\eta:\mathbb{R}^{n}\to\mathbb{R}^{m}$. The two dimensional model $n=m=2$ describes in the theory of light-matter interaction in liquid crystals, the orientation of the molecules at the boundary of the illuminated region. On the other hand, the one dimensional model reduces to the second Painlev\'e O.D.E. $y''-xy-2y^3=0$, $x\in \mathbb{R},$ which has been extensively studied, due to its importance for applications. The solutions of the extended Painlev\'e P.D.E. share some characteristics both with the Ginzburg-Landau equation and the second Painlev\'e O.D.E. The scope of this paper is to construct standard vortex solutions $y:\mathbb{R}^{n}\to\mathbb{R}^{n-1}$ ($\forall n\geq 3$) of the extended Painlev\'e equation. These solutions have in every hyperplane $x_1=\mathrm{Const.}$, a profile similar to the standard vortices $\eta:\mathbb{R}^{n-1}\to\mathbb{R}^{n-1}$ of the Ginzburg-Landau equation, but their amplitude is determined by the Hastings-McLeod solution $h$ of the second Painlev\'e O.D.E. evaluated at $x_1$.

\end{abstract}

\maketitle

\section{The extended Painlev\'e P.D.E.}\label{sec:sec2}
\subsection{Origin of the model}

In the physical context of light-matter interaction in liquid crystals (cf. \cite{clerc2}), we recently discovered that the extended Painlev\'e equation:
\begin{equation}\label{pain 1}
\Delta y-x_1 y-2|y|^2y=0 , \qquad \forall x=(x_1,\ldots,x_n)\in \R^n, y=(y_1,\ldots,y_m):\R^n\to\R^m,
\end{equation}
is relevant to describe the orientation of the molecules at the boundary of the illuminated region (cf. \cite{Clerc2017} when $n=m=1$, \cite{Clerc2018} when $n=m=2$, and \cite{panayotis_4} when $n=2$, $m=1$).
It can alternatively be written as
\begin{equation}
\label{pain 1db}
\Delta y(x)= H_y(x_1,y(x)), \qquad x\in \R^n,
\end{equation}
with a non autonomous potential $H(x_1,y):=\frac{1}{2} x_1 |y|^2 +\frac{1}{2} |y|^4$, $(x_1,y)\in\R\times\R^m$, and $H_y=(\frac{\partial H}{\partial y_1},\ldots,\frac{\partial H}{\partial y_m})\in\R^m$. Also note that equation \eqref{pain 1} has variational structure.
Let
\begin{equation}\label{funcp}
E_{\mathrm{P_{II}}}(u, \Omega)=\int_\Omega \left[ \frac{1}{2} |\nabla u|^2 +\frac{1}{2}  x_1 |u|^2 +\frac{1}{2}| u|^4\right],\ u \in H^1(\Omega;\R^m), \ \Omega\subset \R^n,
\end{equation}
be its associated functional.
In the physical models studied in the aforementioned works, the extended Painlev\'e equation is obtained as the singular limit (after appropriate rescaling) of the system
\begin{equation}\label{sing}
\epsilon^2\Delta u_\epsilon-\mu(x)u_\epsilon-|u_\epsilon|^2u_\epsilon+\epsilon a(\epsilon) f(x)=0 , \ \forall x=(x_1,\ldots,x_n)\in \R^n, u_\epsilon:\R^n\to\R^m, \ \epsilon>0,
\end{equation}
where $\mu(x)=e^{\,-|x|^2}-\chi$, $\chi\in (0,1)$, $f:\R^n\to\R^m$ is a specific map related to $\mu$, and $a(\epsilon)$ is a nonnegative parameter.
More precisely, the function $\mu$ describes light intensity and is sign changing due to the fact that the light is applied to the sample
locally, and areas where $\mu< 0$ are interpreted as shadow zones, while areas
where $\mu > 0$ correspond to illuminated zones. On the other hand the map $f$ describes the electric field induced by the light. Finally, the intensity
of the applied laser light is represented by the parameter $a$. The relevant solution of \eqref{sing} to model the orientation of the molecules in the liquid crystal sample, is a minimizer $v_\epsilon\in H^1(\R^n;\R^m)$ of the energy functional associated to equation \eqref{sing}:
\begin{equation}
\label{funct00}
E(u)=\int_{\R^n}\left[\frac{\epsilon}{2}|\nabla u|^2-\frac{1}{2\epsilon}\mu(x)|u|^2+\frac{1}{4\epsilon}|u|^4-a(\epsilon)f(x) \cdot u\right],
\end{equation}
where $\cdot$ stands for the inner product in $\R^m$. Assuming that $\lim_{\epsilon\to 0}a(\epsilon)=0$ (cf. \cite[Theorems 1.2 and 1.3 (i)]{Clerc2017}, and \cite[Theorem 1.1 (ii)]{Clerc2018}), we discovered that the minimizers $v_\epsilon$ appropriately rescaled in a neighbourhood of a point where $\mu$ vanishes, converge as $\epsilon \to 0$, to a solution $y$ of \eqref{pain 1}. In addition, $y$ is by construction bounded in the half-spaces $[s_0,\infty)\times \R^{n-1}$, $\forall s_0\in\R$, and \emph{minimal} in the sense that
\begin{equation}\label{minnn}
E_{\mathrm{P_{II}}}(y, \mathrm{supp}\, \phi)\leq E_{\mathrm{P_{II}}}(y+\phi, \mathrm{supp}\, \phi)
\end{equation}
for all $\phi\in C^\infty_0(\R^n;\R^m)$.
To explain formally the relation between (\ref{pain 1}) and the energy $E$, one can see from the expression of $E$, that as $\epsilon\to 0$, the modulus of the minimizer $v_{\epsilon}$ should approach a nonnegative root of the polynomial $-\mu(x)z+z^3=0$, or in other words, $|v_{\epsilon}|\to \sqrt{\mu^+}$ as $\epsilon\to 0$, in some perhaps weak sense. This function called the Thomas-Fermi limit of the minimizer is nonsmooth, so the transition near the set $\mu(x)=0$ has to be mediated somehow via a solution of \eqref{pain 1}.
\subsection{Topological defects}
One of the most interesting phenomenon occuring in the two dimensional model ($n=m=2$) considered in \cite{Clerc2018} is the presence in the liquid crystal sample of a new type of topological defect that we named the \emph{shadow vortex}\footnote{The shadow vortex was discovered experimentally in \cite{clerc2}. Then, its existence was confirmed mathematically in \cite[Theorem 1.2 (ii)]{Clerc2018}. Manipulating light vortices has applications in quantic computation, telecommunications, and astronomy (improvementof images, detection of exoplanets).}. It appears at the boundary of the illuminated region when the intensity of the applied laser light is of order $a=o(\epsilon |\ln \epsilon|)$. The computation of the distance of the shadow vortex from the boundary of the illuminated region is a difficult open problem. In view of \cite[Theorem 1.1 (ii)]{Clerc2018}, the shadow vortex is located at a distance of order $O(\epsilon^{\frac{2}{3}})$ from the boundary, if and only if its local profile is given by a \emph{minimal} solution $y:\R^2\to\R^2$ of \eqref{pain 1} having at least one zero. This explains why the investigation of equation \eqref{pain 1} is crucial to understand the formation mechanism, and the properties of shadow vortices in liquid crystals. We expect that equation \eqref{pain 1} may also be relevant to describe the singularities of other Ginzburg-Landau type models similar to \eqref{funct00}, for instance in the context of Bose-Einstein condensates (cf. \cite{ignat,MR2062641,MR3355003,sourdis0}). 
On the other hand, in the one dimensional model ($n=m=1$) topological defects appear at the boundary of the illuminated region, only when $\lim_{\epsilon\to 0}a(\epsilon)\in (0,\sqrt{2})$.
In that case (cf. \cite{sourdis2}) the local profile of the \emph{shadow kink} is given by a sign changing minimal solution of the \emph{nonhomogeneous} O.D.E. $y''-x y-2y^3-\alpha=0$. We also refer to \cite{troy1} for an alternative constuction of this sign changing solution, and to \cite{Clerc2017} for the existence of a positive minimal solution of the nonhomogeneous Painlev\'e O.D.E.

\subsection{One dimensional solutions}\label{subsec:1dd}
In the one dimensional case ($n=m=1$), \eqref{pain 1} reduces to the second Painlev\'e O.D.E.:
\begin{equation}\label{pain 0}
y''-x y-2y^3=0 , \qquad \forall x\in \R,
\end{equation}
which is known to play an important role in the theory of integrable systems \cite{MR1149378}, random matrices \cite{2006math.ph...3038D, Flaschka1980,2005math.ph...8062C}, Bose-Einstein condensates \cite{MR2062641, MR2772375, MR3355003,sourdis0} and other problems \cite{alikoakos_1,helffer1998,KUDRYASHOV1997397}. It has been extensively studied by Painlev\'e and others since the early 1900's. In particular, the solutions of \eqref{pain 0} satisfying the boundary condition $\lim_{+\infty}y=0$ have been classified in \cite{MR555581}. In view of this result, we could establish \cite[Theorem 1.3 (i)]{Clerc2017} that the Hastings-McLeod solution, denoted in this paper by $h$, is up to sign change, the only minimal solution of \eqref{pain 0} which is bounded at $+\infty$. We recall (cf. \cite{MR555581}) that $h:\R\to\R$ is positive, strictly decreasing ($h' <0$) and such that
\begin{align}\label{asy0}
h(x)&\sim \mathop{Ai}(x), \qquad x\to \infty, \nonumber \\
h(x)&\sim \sqrt{|x|/2}, \qquad x\to -\infty,
\end{align}
where $\mathop{Ai}$ is the Airy function. 
Having a closer look at the potential $H(x,y)=\frac{1}{2}xy^2+\frac{1}{2}y^4$, one can explain formally the asymptotic behaviour of $h$. Indeed, for $x$ fixed, $H$ attains its global minimum equal to $0$ when  $y=0$ and $x\geq 0$, and equal to $-\frac{x^2}{8}$ when $y=\pm \sqrt{|x|/2}$ and $x<0$. Thus, the global minima of $H$ bifurcate from the origin, and the two minimal solutions $\pm h$ of \eqref{pain 0} interpolate these two branches of minima.

\subsection{Scalar solutions}
In higher dimensions (cf. \eqref{pain 1} with $n\geq 2$, $m=1$), the scalar P.D.E.
\begin{equation}\label{scalarpde}
\Delta y -x_1y-2y^3=0,\ x=(x_1,\ldots,x_n)\in\R^n,\ y:\R^n\to\R,
\end{equation}
involves the non autonomous potential $H(x_1,y)=\frac{1}{2} x_1 y^2 +\frac{1}{2} y^4$ which is bistable for every fixed $x_1<0$. We have shown in \cite{panos405}, that \eqref{scalarpde} describes a phase transition model, as the Allen-Cahn equation below:
\begin{equation}\label{ac}
\Delta u=W'(u)=u^3-u, \ u:\R^n\to\R, \ W:\R\to [0,\infty), \ W(u)=\frac{1}{4}(u^2-1)^2.
\end{equation}
For the latter the phase transition connects the two minima $\pm1$ of $W$, while for the former the phase transition connects the two branches $\pm \sqrt{(-x_1)^+/2}$ of minima of $H$ parametrized by $x_1$.
More precisely, there exists (cf. \cite{panos405}) a solution $y:\R^2\to\R$ of (\ref{scalarpde})\footnote{This solution was constructed by taking the limit of odd minimizers $v_\epsilon:\R^2\to\R$ of $E$, in a neighbourhood of an appropriate point of the circle $\{x\in\R^2:\mu(x)=0\}$ (cf. \cite{panos405}, and \cite{panayotis_4} for further relation between shadow domain walls and the extended Painlev\'e equation).} converging as $x_2\to \pm \infty$ and $x_1$ is fixed, to the two minimal solutions $\pm h(x_1)$ of \eqref{pain 0}. We detail below its main properties:

\begin{itemize}
\item[(i)] $y$ is positive in the upper half-plane and odd with respect to $x_2$ i.e. $y(x_1,x_2)=-y(x_1,-x_2)$.
\item[(ii)] $y_{x_2}(x_1,x_2)>0$, $\forall x_1, x_2\in\R$, and $\lim_{l\to\pm\infty} y(x_1,x_2+l)=\pm h(x_1)$ in $C^2_{\mathrm{loc}}(\R^2)$.
\item[(iii)] $y$ is minimal (cf. definition \eqref{minnn})\footnote{The minimality of $y$ for general perturbations was pointed out to us by Sourdis \cite{sour}.}.
\item[(iv)] For every $x_2\in\R$ fixed, let $\tilde y(t_1,t_2):=\frac{\sqrt{2}}{(-\frac{3}{2} t_1)^{\frac{1}{3}}}\, y\big(-(-\frac{3}{2} t_1)^{\frac{2}{3}}, x_2+t_2(-\frac{3}{2} t_1)^{-\frac{1}{3}}\big)$. Then
\begin{equation}\label{scale1}
\lim_{l\to -\infty} \tilde y(t_1+l,t_2)=
\begin{cases}
\tanh(t_2/\sqrt{2})  &\text{when } x_2=0, \\
1  &\text{when } x_2>0, \\
-1  &\text{when } x_2<0,
\end{cases}
\end{equation} for the $C^1_{\mathrm{ loc}}(\R^2)$ convergence.
\item[(v)] $y_{x_1}(x_1,x_2)<0$, $\forall x_1\in\R$, $\forall x_2>0$.
\end{itemize}

In view of (i), (ii) and (iii) above, the solution $y$ plays a similar role that the heteroclinic orbit $\gamma( x)= \tan (x/\sqrt{2})$, of the Allen-Cahn O.D.E. $\gamma''=\gamma^3-\gamma$.
First of all, both solutions $y$ and $\gamma$ are minimal and odd. Next, $y$ connects monotonically along the vertical direction $x_2$, the two minimal solutions $\pm h(x_1)$, in the same way that $\gamma$ connects monotonically the two global minimizers $\pm 1$ of the potential $W$. What's more, the two global minimizers $\pm 1$ of the Allen-Cahn functional $E_{\mathrm{AC}}=\int_\R\big(\frac{1}{2}|u'|^2+W(u)\big)$ have their counterparts in the two minimal solutions $\pm h$ of the Painlev\'{e} equation. While $\gamma$ is a one dimensional object, the solution $y(x_1,x_2)$ is two dimensional, since $x_1$ parametrizes the branches of minima of the potential $H$, and only $x_2$ is involved in the phase transition. The analogy between equations \eqref{scalarpde} and \eqref{ac} also appears in property (iv). Indeed, after rescaling, the solution $y$ converges as $x_1\to-\infty$, to a minimal solution of the Allen-Cahn O.D.E., which is depending on the case either $\gamma$ or $\pm 1$. This is not so surprising because the scalar Painlev\'e P.D.E. \eqref{scalarpde} is obtained by multiplying by $-x_1$, the linear term $u$ in the Allen-Cahn P.D.E. \eqref{ac}, and after rescaling as in \eqref{scale1}, the dependence on $x_1$ disappears as $x_1\to-\infty$.

\subsection{The vector Painlev\'e P.D.E. and the Ginzburg-Landau system}

The scope of the present paper is to investigate the vector equation \eqref{pain 1} (with $m\geq 2$), and construct the first to our knowledge nontrivial examples of solutions. First of all, we shall point out the deep connection of  \eqref{pain 1} with
the Ginzburg-Landau system
\begin{equation}\label{gl}
\Delta u=\nabla W(u)=|u|^2 u-u, \ u:\R^n\to\R^m, \ W:\R^m\to [0,\infty), \ W(u)=\frac{1}{4}(|u|^2-1)^2,
\end{equation}
which has been extensively studied (cf. in particular \cite{bethuel1} and \cite{SS}) due to its application in the theory of superconductors and superfluids.  Actually, the vector Painlev\'e P.D.E. \eqref{pain 1} only differs from \eqref{gl} by the factor $-x_1$ multiplying its linear term. 

On the one hand, the non autonomous potential $H(x_1,y)=\frac{1}{2}x_1|y|^2+\frac{1}{2}|y|^4$ associated to \eqref{pain 1} attains its global minimum equal to $0$ when  $y=0$ and $x_1\geq 0$, and equal to $-\frac{x^2_1}{8}$ when $|y|=\sqrt{|x_1|/2}$ and $x_1<0$. Thus, for every fixed $x_1<0$, the set of minima of $H$ is the sphere $\{y\in\R^m: |y|= \sqrt{|x_1|/2}\}$.
On the other hand, the Ginzburg-Landau potential $W$ is nonnegative, radial, and vanishes only on the unit sphere.
These properties of $W$ imply that \eqref{gl} admits when $n=m\geq 2$, a unique \emph{standard vortex} solution $\eta \in C^\infty(\R^n;\R^n)$ such that
\begin{itemize}
\item[(a)] $\eta$ is $O(n)$-equivariant (i.e. $\eta(gx)=g \eta (x)$, $\forall x\in \R^n$, $\forall g\in O(n)$), or equivalently $\eta(x)=\eta_{\mathrm{rad}}(|x|)\frac{x}{|x|}$, $\forall x\neq 0$, where $\eta_{\mathrm{rad}}$ is a function having an odd extension in $C^\infty(\R)$.
\item[(b)] $\eta_{\mathrm{rad}}$ is increasing, and converges to $1$ at $+\infty$.
\end{itemize}
In addition the solution $\eta$ is \emph{minimal} in the sense that $E_{\mathrm{GL}}(\eta, \mathrm{supp}\, \phi)\leq E_{\mathrm{GL}}(\eta+\phi, \mathrm{supp}\, \phi)$, for all $\phi\in C^\infty_0(\R^n;\R^n)$, where
\begin{equation}\label{funcgl}
E_{\mathrm{GL}}(u, \Omega)=\int_\Omega \left[ \frac{1}{2} |\nabla u|^2 +\frac{1}{4}  (|u|^2 -1)^2\right],\ \Omega\subset \R^n,
\end{equation}
is the Ginzburg-Landau energy functional.
Actually in dimension $n=2$, Mironescu \cite{mironescu} established (cf. also \cite{MR1267609}) that any minimal solution of (\ref{gl}) is either constant of modulus $1$ or
(up to orthogonal transformation in the range and translation in the domain) the standard vortex $\eta$.
In higher dimensions $n=m\geq 3$, the minimality of $\eta$ was proved by Pisante \cite{pisante}, however it is not clear if there exist other nontrivial minimal solutions.

Our purpose in this paper is to construct the analog of the standard vortex solution $\eta$ for the vector Painlev\'e P.D.E. \eqref{pain 1}. Since for every fixed $x_1<0$, the potential $H(x_1,y)$ attains its global minimum on the sphere $\{y\in\R^m: |y|= \sqrt{|x_1|/2}\}$, we should have $m=n-1$ in order to allow the formation of vortices in the hyperplanes $x_1=\mathrm{Const}$. On the other hand, the amplitude of these vortices will depend on the radius $\sqrt{|x_1|/2}$. Thus, the standard vortex solution of \eqref{pain 1} should be a solution $y:\R^n\to\R^{n-1}$, $n\geq 3$, such that
\begin{itemize}
	\item[(a)] $y$ is $O(n-1)$-equivariant with respect to $(x_2,\ldots,x_n)=:z$ (i.e. $y(x_1,gz)=g y(x_1,z)$, $\forall x_1\in \R$, $\forall z \in \R^{n-1}$, $\forall g\in O(n-1)$).
	\item[(b)] $|y(x_1,z)|\approx \sqrt{|x_1|/2}$, as $|z|\to\infty$, and $x_1<0$ is fixed.
\end{itemize}

More precisely, we have:

\begin{theorem}\label{corpain2}
There exists a solution $y\in C^\infty(\R^n;\R^{n-1})$ (with $ n \geq 3$) to
\begin{equation}\label{painhom}
\Delta y-x_1 y-2|y|^2y=0, \text{ with } x=(x_1,\ldots,x_n)\in \R^n,\ y=(y_2,\ldots,y_n):\R^n\to\R^{n-1},
\end{equation}
such that
\begin{itemize}
\item[(i)] Setting $z:=(x_2,\ldots,x_n)$, $e_z:=\frac{z}{|z|}$, and $\sigma:=|z|$, we have $y(x)= y_{\mathrm{rad}}(x_1,\sigma)e_z$, where $y_{\mathrm{rad}}(x_1,\sigma)$ is a function having an odd with respect to $\sigma$ extension in $C^\infty(\R^2;\R)$.
\item[(ii)] In addition, $y_{\mathrm{rad}}(x_1,\sigma)>0$, $\frac{\partial y_{\mathrm{rad}}}{\partial x_1}(x_1,\sigma)<0$, and $\frac{\partial y_{\mathrm{rad}}}{\partial \sigma}(x_1,\sigma)>0$,  $\forall x_1\in\R$, $\forall \sigma>0$.
\item[(iii)] $|y(x)|<h(x_1)$, $\forall x \in \R^{n}$, and $\lim_{l\to\infty}  y_{\mathrm{rad}}(x_1,\sigma+l)=h(x_1)$ in $C^1_{\mathrm{loc}}(\R^2;\R)$,
where $h$ is the Hastings-McLeod solution of (\ref{pain 0}).
\item[(iv)] For every $z=(x_2,\ldots,x_n)\in\R^{n-1}$ fixed, let
\begin{equation}\label{ytilde}
\tilde y(t_1,\ldots,t_n):=\frac{\sqrt{2}}{(-\frac{3}{2} t_1)^{\frac{1}{3}}}\, y\big(-(-\frac{3}{2} t_1)^{\frac{2}{3}}, x_2+t_2(-\frac{3}{2} t_1)^{-\frac{1}{3}}, \ldots, x_n+t_n(-\frac{3}{2} t_1)^{-\frac{1}{3}}    \big).
\end{equation}
Then
\begin{equation}\label{scale2}
\lim_{l\to -\infty} \tilde y(t_1+l,t_2,\ldots, t_n)=
\begin{cases}
\eta(t_2,\ldots,t_n)  &\text{when } z=0, \\
e_z  &\text{when } z\neq 0,
\end{cases}
\end{equation} for the $C^1_{\mathrm{ loc}}(\R^n;\R^{n-1})$ convergence, where $\eta\in C^\infty(\R^{n-1};\R^{n-1})$ is the standard vortex solution of the Ginzburg-Landau system \eqref{gl}.
\end{itemize}
\end{theorem}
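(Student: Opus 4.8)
The plan is to realize $y$ inside the $O(n-1)$-equivariant class, which reduces \eqref{painhom} to a scalar semilinear equation in the two variables $(x_1,\sigma)$, and then to produce the profile $g:=y_{\mathrm{rad}}$ as a limit of minimizers on expanding rectangles, controlled from above by the Hastings--McLeod solution. Inserting the ansatz $y=g(x_1,\sigma)e_z$ (with $\sigma=|z|$, $e_z=z/|z|$) into \eqref{painhom}, a direct computation shows that $y$ solves \eqref{painhom} on $\R^n\setminus\{\sigma=0\}$ if and only if $g$ solves
\begin{equation}\label{redeq}
L[g]:=g_{x_1x_1}+g_{\sigma\sigma}+\frac{n-2}{\sigma}g_\sigma-\frac{n-2}{\sigma^2}g-x_1g-2g^3=0,\qquad \sigma>0,
\end{equation}
while the associated energy becomes, up to the factor $|\SF^{n-2}|$,
\begin{equation}\label{reden}
\int\Big[\tfrac12 g_{x_1}^2+\tfrac12 g_\sigma^2+\tfrac{n-2}{2\sigma^2}g^2+\tfrac12 x_1g^2+\tfrac12 g^4\Big]\,\sigma^{n-2}\,\dd\sigma\,\dd x_1 .
\end{equation}
Smoothness of $y$ across the vortex core $\{\sigma=0\}$ is equivalent to $g$ admitting a smooth odd extension in $\sigma$ with $g(x_1,0)=0$, which is the content of assertion (i).

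The construction itself would proceed by minimizing \eqref{reden} on rectangles $Q_k=(-k,k)\times(0,k)$ among functions with $g=0$ on $\{\sigma=0\}$ and $g=h(x_1)$ on the remaining boundary, obtaining minimizers $g_k$ solving \eqref{redeq}. The key a priori bound is $0\le g_k\le h(x_1)$: since $h$ solves the Painlev\'e O.D.E. \eqref{pain 0}, one computes $L[h]=-\frac{n-2}{\sigma^2}h<0$ for $\sigma>0$, so $h$ is a strict supersolution of \eqref{redeq}, and truncating $g_k$ against $h$ (respectively against $0$) does not increase the energy; the strong maximum principle then upgrades these to $0<g_k<h$ in $Q_k$. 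This $L^\infty$ bound furnishes uniform local $C^{2,\alpha}$ estimates via interior and boundary Schauder theory, so a subsequence converges in $C^2_{\mathrm{loc}}$ to a solution $g$ of \eqref{redeq} on $\{\sigma>0\}$, smooth up to $\{\sigma=0\}$ after odd reflection. This yields $y\in C^\infty(\R^n;\R^{n-1})$ together with the strict bound $|y|<h(x_1)$ of (iii).

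The monotonicity (ii) and the sharp far-field behaviour in (iii) are where the real work lies. Differentiating \eqref{redeq} gives, for $q=\partial_{x_1}g$ and $p=\partial_\sigma g$,
\begin{equation}\label{diffeq}
q_{x_1x_1}+q_{\sigma\sigma}+\frac{n-2}{\sigma}q_\sigma-\Big(\frac{n-2}{\sigma^2}+x_1+6g^2\Big)q=g,
\end{equation}
together with an analogous equation for $p$ whose right-hand side equals $-\frac{2(n-2)}{\sigma^3}g$. Since both right-hand sides are sign-definite ($g>0$ and $-\frac{2(n-2)}{\sigma^3}g<0$), the maximum principle and Hopf's lemma force $q<0$ and $p>0$, \emph{provided} the zeroth-order coefficient $\frac{n-2}{\sigma^2}+x_1+6g^2$ is nonnegative; controlling this sign is the delicate point, and I would handle it by combining the lower barrier below with the asymptotics $h(x_1)\sim\sqrt{|x_1|/2}$ from \eqref{asy0} and the minimality of $y$. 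For $\lim_{\sigma\to\infty}g=h(x_1)$ I would squeeze $g$ between $h$ and a subsolution of the form $h(x_1)\varphi(\sigma)$ with $0\le\varphi\le1$ and $\varphi\to1$: using $h''=x_1h+2h^3$ one finds $L[h\varphi]=h\big(\varphi''+\frac{n-2}{\sigma}\varphi'-\frac{n-2}{\sigma^2}\varphi\big)+2h^3\varphi(1-\varphi^2)$, and the restoring term $2h^3\varphi(1-\varphi^2)\ge0$ lets a suitable $\varphi$ make this nonnegative, providing the lower barrier that forces $g\to h$.

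Finally, for the rescaling limit (iv) I would insert the anisotropic blow-up \eqref{ytilde} into \eqref{painhom}. Under this change of variables the factor $-x_1$ is normalized to $+1$ as $x_1\to-\infty$, so $\tilde y$ converges, thanks to the uniform bounds and local estimates, in $C^1_{\mathrm{loc}}$ to a solution of the Ginzburg--Landau system \eqref{gl} on $\R^{n-1}$. When $z=0$ the blow-up keeps the whole vortex core in view, and equivariance together with minimality identify the limit with the unique standard vortex $\eta$ of \cite{mironescu,pisante}; when $z\neq0$ the window is pushed to a point where $|y|\to\sqrt{|x_1|/2}\sim h(x_1)$, so the limit is the constant $e_z$ of modulus one. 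I expect the two hardest steps to be the monotonicity in the Painlev\'e variable $x_1$ (equation \eqref{diffeq}, because of the sign of its zeroth-order coefficient) and the identification of the blow-up limit with $\eta$, both of which rely essentially on the minimality of $y$ and on the Hastings--McLeod asymptotics \eqref{asy0}.
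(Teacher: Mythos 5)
Your existence step (equivariant reduction, minimization on expanding domains, truncation against the supersolution $h$, Schauder estimates and a diagonal argument) is essentially the paper's construction in Lemma \ref{lem1} and is sound. The genuine gaps sit exactly at the two places you yourself flag as ``where the real work lies'', and in both cases the mechanism you propose does not close them. For the monotonicity (ii), applying a maximum principle to $q=\partial_{x_1}g$ and $p=\partial_\sigma g$ in the half-plane $\{\sigma>0\}$ requires the zeroth-order coefficient $\frac{n-2}{\sigma^2}+x_1+6g^2$ to be nonnegative (and, for an unbounded domain as in \cite[Lemma 2.1]{beres}, also that $q$, $p$ be bounded above, which is not known a priori since the elliptic estimates only give $|\nabla y|=O(|x_1|^{3/2})$). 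At the stage where you would invoke this, all you know is $0\le g\le h(x_1)$ and $g>0$ for $\sigma>0$: wherever $g$ is small compared to $\sqrt{|x_1|}$ with $x_1\to-\infty$ and $\sigma\gtrsim 1$, the coefficient is $\approx x_1\to-\infty$, and nothing proven so far excludes such points. Ruling them out amounts to a uniform quantitative lower bound $6g^2\ge |x_1|-\frac{n-2}{\sigma^2}$ outside a shrinking core, i.e.\ essentially conclusions (iii)--(iv) in uniform form; the soft compactness limits do not provide this, so your appeal to ``the lower barrier, the asymptotics and minimality'' is circular. The paper's mechanism is structurally different: reflection across $\{x_1=\lambda\}$ with $\lambda\ge0$, where the relevant coefficient is $\ge x_1\ge0$ (Lemma \ref{lll1}); barriers confined to $\{x_1\ge1\}$ giving $\nabla y_2\cdot\n>0$ for $x_1>s_\n$ (Lemma \ref{lll2}); then oblique moving planes slid down to $\lambda=-\infty$ (Lemma \ref{movingplane}), where no global sign condition is needed because the strong maximum principle is applied only at touching points where $\psi_{\lambda_0}\le0$ is already known, the sliding being sustained by compactness together with the asymptotics of Lemmas \ref{lem44} and \ref{ass22} --- which are proved \emph{before} the monotonicity, by minimality and not by barriers.

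The barrier you propose for (iii) is in fact impossible. Writing $L_0\varphi:=\varphi''+\frac{n-2}{\sigma}\varphi'-\frac{n-2}{\sigma^2}\varphi$, your identity reads $L[h\varphi]=h\bigl(L_0\varphi+2h^2\varphi(1-\varphi^2)\bigr)$, so a subsolution valid for all $x_1$ needs $L_0\varphi\ge -2h(x_1)^2\varphi(1-\varphi^2)$ for every $x_1$; letting $x_1\to+\infty$, where $h\to0$ by \eqref{asy0}, this forces $L_0\varphi\ge0$ on $(\sigma_0,\infty)$. But the homogeneous solutions of $L_0$ are $\sigma$ and $\sigma^{-(n-2)}$, and comparison on $[\sigma_0,R]$ with $R\to\infty$ shows that any bounded subsolution of $L_0$ satisfies $\varphi(\sigma)\le C\sigma^{-(n-2)}\to0$, contradicting $\varphi\to1$. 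Restricting the barrier to $\{x_1\le M\}$ does not help: you would then need $h(M)\varphi(\sigma)\le g(M,\sigma)$ on the lateral boundary, which is exactly the kind of lower bound on $g$ you are trying to prove. The paper obtains (iii) without barriers: translates $y(\cdot,z+z_k)$, $|z_k|\to\infty$, converge to a nonnegative \emph{minimal} solution of the scalar equation \eqref{scalarpde} bounded by $h(x_1)$, which must equal $h(x_1)$ by the classification Lemma \ref{lema} (itself proved by induction on dimension, again by sliding); this requires carrying minimality through the limiting construction, which your outline never does. Two further holes of the same kind: you never exclude that the limiting $g$ is identically zero (the paper rules this out via the second-variation argument in Lemma \ref{lem1}), and in (iv) with $z=0$ the citation of \cite{mironescu,pisante} is insufficient as stated, since those results concern minimizers for \emph{general} perturbations while your blow-up limit inherits only equivariant minimality; the paper instead identifies the limit using equivariance, equivariant minimality, and the $t_1$-monotonicity inherited from (ii).
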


Theorem \ref{corpain2} provides a solution $y:\R^n\to\R^{n-1}$ having in every hyperplane $x_1=\mathrm{Const.}$, a profile similar to $\eta$. Indeed, for fixed $x_1$, the $O(n-1)$-equivariant map $z\mapsto y(x_1,z)$ only vanishes at $z=0$, and its modulus increases as $|z|$ increases. The amplitude of these vortices is determined by the Hastings-McLeod solution $h$ evaluated at $x_1$. As we mentioned in subsection \ref{subsec:1dd}, $h$ interpolates smoothly the function $x_1\mapsto \sqrt{(-x_1)^+/2}$ describing the radius of the sphere where the potential $H(x_1,y)$ attains its global minimum. On the other hand, for every fixed $z\neq 0$, the map $x_1\mapsto |y(x_1,z)|$ is decreasing, like $h$.
Finally, property (iv) shows that after rescaling, the solution $y$ converges as $x_1\to-\infty$, to a solution of the Ginzburg-Landau system \eqref{gl}, which is depending on the case either $\eta$ or a constant of modulus $1$
(compare with \eqref{scale1}).
Actually, it is proved in Lemma \ref{ass}, that the rescaling \eqref{scale2} applied to any solution of \eqref{pain 1} satisfying the bound \eqref{boundass}, provides after passing to limit, a solution of \eqref{gl}.

\subsection{Minimal solutions of the vector Painlev\'e P.D.E}
Despite the deep connection of the vector Painlev\'{e} P.D.E. with the Ginzburg-Landau system, we are not aware if the structure of minimal solutions of \eqref{pain 1} exactly mirrors that of \eqref{gl} at least in low dimensions.
Although by construction the solution $y:\R^n\to\R^{n-1}$ provided by Theorem \ref{corpain2} is only minimal for $O(n-1)$-equivariant perturbations, we expect that $y$ is actually minimal for general perturbations, as the standard vortex $\eta:\R^{n-1}\to\R^{n-1}$ of the Ginzburg-Landau system. While $\eta$ is defined in $\R^{n-1}$, the solution $y$ depends on $n$ variables, since $x_1$ parametrizes the set of minima of the potential $H$, and only $x_2,\ldots,x_n$ are involved in the vortex formation mechanism. 

In the one dimensional case, one can easily see (cf. Lemma \ref{cara}) that the only minimal solutions $y:\R\to\R^m$ of \eqref{pain 1} bounded at $+\infty$, are the maps $\R\ni x_1\mapsto h(x_1)\n$, with $\n\in\R^m$ a unit vector. These maps have their counterparts in the constant solutions of modulus $1$ of \eqref{gl} (which are also the global minimizers of $E_{\mathrm{GL}}$).
On the other hand, in dimension two, the existence of nontrivial minimal solutions $y:\R^2\to\R^m$ of \eqref{pain 1} is not clear, since the Painlev\'e system with $n=2$ and $m\geq 2$, is related to the O.D.E. system $u''=|u|^2-u$, $u:\R\to\R^m$, which has only constant minimal solutions (cf. \cite[Remark 3.5.]{antonop}, and also \cite{farina} for nonexistence results of minimal solutions of the Ginzburg-Landau system in higher dimensions).

As far as the liquid crystal model \eqref{sing} is concerned, with $n=m=2$ as in \cite{Clerc2018}, the nonexistence of a minimal solution $y:\R^2\to\R^2$ of \eqref{pain 1} having an isolated zero, would imply the following two important results. Firstly, that the profile of the shadow vortex (appearing when the intensity of the applied laser light is of order $o(\epsilon |\ln \epsilon|)$, cf. \cite[Theorem 1.2 (ii)]{Clerc2018}), is given by the Ginzburg-Landau system \eqref{gl}. Secondly, that it is located at a distance $d \gg \epsilon^{2/3}$ from the boundary of the illuminated region.


\section{General results for solutions $y:\R^n\to\R^m$ of \eqref{pain 1}}\label{sec:sec3}


In this section we collect some general results holding for every solution $y:\R^n\to\R^m$ of \eqref{pain 1}. These results may be 
useful to construct different types of solutions of \eqref{pain 1}. They will be used and particularized in the proof of Theorem \ref{corpain2}, in Section \ref{sec:sec4}. 

In the next two lemmas, we shall first examine the asymptotic behavior of solutions of \eqref{pain 1}, as $x_1\to+\infty$, and $x_1\to-\infty$.

\begin{lemma}\label{expcvv}
Let $y:\R^n\to\R^m$ be a solution of \eqref{pain 1} which is bounded in the half-space $\{x_1\geq 1\}$. Then, we have
$|y(x)|=O(e^{-\frac{2}{3}x_1^{3/2}})$, as $x_1\to\infty$ (uniformly in $z=(x_2,\ldots,x_n)$).
\end{lemma}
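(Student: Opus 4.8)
The plan is to bound the scalar function $v:=|y|$ by a one–dimensional Airy barrier depending only on $x_1$. First I would record the Kato-type differential inequality satisfied by $v$. At every point where $y\neq 0$ the function $v$ is smooth, and a direct computation using $\nabla v=(y\cdot\nabla y)/|y|$ together with the Cauchy–Schwarz bound $|\nabla v|\le|\nabla y|$ gives
$$\Delta v=\frac{y\cdot\Delta y}{|y|}+\frac{|\nabla y|^2-|\nabla v|^2}{|y|}\ge \frac{y\cdot\Delta y}{|y|}=x_1 v+2v^3\ge x_1 v,$$
where I substituted $y\cdot\Delta y=x_1|y|^2+2|y|^4$ from \eqref{pain 1}. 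Thus $v$ is, wherever $v>0$, a classical subsolution of the linear operator $L:=\Delta-x_1$, whose zeroth–order coefficient $-x_1$ has the favourable sign on the half–space $\{x_1\ge a\}$ for any $a\ge1$.

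Next I would introduce the natural barrier. Since the Airy function $\mathrm{Ai}$ solves $\mathrm{Ai}''(x_1)=x_1\,\mathrm{Ai}(x_1)$, the function $\bar v(x):=C\,\mathrm{Ai}(x_1)$ is an exact solution of $L\bar v=0$, is positive for $x_1\ge a$, and has precisely the decay $\mathrm{Ai}(x_1)\sim \tfrac1{2\sqrt\pi}\,x_1^{-1/4}e^{-\frac23 x_1^{3/2}}$ appearing in the statement. Writing $M:=\sup_{\{x_1\ge a\}}|y|<\infty$ (finite by hypothesis) and choosing $C:=M/\mathrm{Ai}(a)$, the difference $w:=v-\bar v$ is $\le0$ on the boundary hyperplane $\{x_1=a\}$ and satisfies $\Delta w\ge x_1 w$ wherever $v>0$. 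The goal is then to conclude $w\le0$ on the whole half-space, which would give $|y(x)|\le C\,\mathrm{Ai}(x_1)=O(e^{-\frac23 x_1^{3/2}})$ uniformly in $z$.

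The main obstacle is that the domain $\{x_1>a\}\times\R^{n-1}$ is unbounded, so a priori the supremum of $w$ could escape to infinity rather than be attained. I would overcome this by subtracting a small multiple of a positive supersolution that blows up in every unbounded direction. Fix $a\ge\max\{1,2(n-1)\}$ and set
$$\Phi(x):=\mathrm{Bi}(x_1)+1+|z|^2,$$
where $\mathrm{Bi}$ is the second Airy function (so $\mathrm{Bi}''=x_1\mathrm{Bi}$, with $\mathrm{Bi}>0$ and $\mathrm{Bi}\to+\infty$ on $[a,\infty)$). A one–line computation gives $\Delta\Phi=x_1\mathrm{Bi}(x_1)+2(n-1)\le x_1\Phi$ on $\{x_1\ge a\}$, because $x_1(1+|z|^2)\ge a\ge 2(n-1)$; hence $\Phi$ is a positive supersolution of $L$ with $\Phi\to+\infty$ whenever $x_1\to\infty$ or $|z|\to\infty$. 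For $\epsilon>0$ the function $w_\epsilon:=w-\epsilon\Phi$ then satisfies $\Delta w_\epsilon\ge x_1 w_\epsilon$ wherever it is positive, is $\le0$ on $\{x_1=a\}$, and tends to $-\infty$ at infinity in $\overline{\{x_1\ge a\}}$.

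Finally I would run the maximum principle. If $\sup w_\epsilon>0$, it is attained at some interior point $x_0$; there $v(x_0)=w_\epsilon(x_0)+C\,\mathrm{Ai}(x_1)+\epsilon\Phi>0$, so the inequality $\Delta w_\epsilon(x_0)\ge x_1(x_0)\,w_\epsilon(x_0)>0$ holds classically, contradicting the fact that the Laplacian is $\le0$ at an interior maximum. (Conveniently, positivity of $w_\epsilon$ forces $v>0$ at $x_0$, so the Kato subtlety at the zero set of $y$ never enters.) Hence $w_\epsilon\le0$, i.e. $v\le C\,\mathrm{Ai}(x_1)+\epsilon\Phi$; letting $\epsilon\to0$ for fixed $x$ yields $v\le C\,\mathrm{Ai}(x_1)$ on $\{x_1\ge a\}$, and the Airy asymptotics deliver the claimed uniform bound $|y(x)|=O(e^{-\frac23 x_1^{3/2}})$.
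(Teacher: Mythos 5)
Your proof is correct, and while it rests on the same core strategy as the paper's (comparison with a barrier decaying like $e^{-\frac{2}{3}x_1^{3/2}}$ on a half-space, via the maximum principle), the execution differs in three substantive ways. First, the paper sidesteps Kato's inequality entirely by working componentwise: each $y_i$ is smooth everywhere and satisfies the exact equation $\Delta y_i=(x_1+2|y|^2)y_i$, so one compares $\pm y_i$ with the barrier and never worries about the zero set of $y$; you instead work with $v=|y|$, which forces the Kato computation, though you correctly observe that at a positive maximum of $w_\epsilon$ one automatically has $v>0$, so the singular set never interferes. Second, the paper's barrier is the bare exponential $\psi(x)=Me^{-\frac{2}{3}x_1^{3/2}}$, verified to be a supersolution by the elementary computation $\Delta\psi=(x_1-\tfrac12 x_1^{-1/2})\psi\leq x_1\psi\leq (x_1+2|y|^2)\psi$, whereas you use the exact Airy solution $C\,\mathrm{Ai}(x_1)$; both work, and both give the same asymptotic rate. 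Third, and most significantly, the paper handles the unboundedness of the half-space by citing the Berestycki--Caffarelli--Nirenberg maximum principle as a black box, while you prove the needed principle from scratch via the auxiliary blowing-up supersolution $\Phi(x)=\mathrm{Bi}(x_1)+1+|z|^2$ and the $\epsilon$-perturbation argument (noting $\Delta\Phi\leq x_1\Phi$ once $a\geq 2(n-1)$, which is harmless since only the asymptotics as $x_1\to\infty$ are at stake). What each buys: the paper's version is shorter and avoids all regularity caveats; yours is self-contained, requiring no external lemma, at the cost of the extra supersolution construction and the Kato step.
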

\begin{proof}
Let $M=e^{\frac{2}{3}}\sup_{x_1\geq 1} |y(x)|$, let $y=(y_1,\ldots,y_m)\in\R^m$, and let $\Omega=\{ x_1>1\}$.
We shall compare $\pm y_i$, $\forall i=1,\ldots,m$, with the function $\psi(x):=M e^{-\frac{2}{3} x_1^{3/2}}$. It is clear that $\Delta \psi \leq x_1\psi\leq (x_1+2|y|^2)\psi$ on $\Omega$, and $\Delta (y_i-\psi) \geq (x_1+2|y|^2) (y_i-\psi)$ on $\Omega$.
Since we have $y_i-\psi\leq 0$ on $\partial \Omega$, it follows from the maximum principle (cf. \cite[Lemma 2.1]{beres}) that $y_i-\psi\leq 0$ holds on $\Omega$. Similarly, we obtain that $-y_i-\psi\leq 0$ holds on $\Omega$.
\end{proof}

\begin{lemma}\label{ass}
Let $y:\R^n\to\R^m$ be a solution of \eqref{pain 1} such that the function 
\begin{equation}\label{boundass}
x\mapsto \frac{|y(x)|}{\sqrt{|x_1|}} \text{ is bounded in the half-space $\{x_1\leq -1\}$},
\end{equation}
and let $$t=(t_1,t_2,\ldots,t_n):=\big(-\frac{2}{3} (-x_1)^{\frac{3}{2}}, (-x_1)^{\frac{1}{2}}r_2, \ldots, (-x_1)^{\frac{1}{2}}r_n,\big), \ \forall x_1\leq -1, \ \forall r:=(r_2,\ldots, r_n)\in\R^{n-1}.$$
Equivalently, setting $\tau:=(t_2,\ldots, t_n)\in\R^{n-1}$, we have $(x_1,r)=\big(-(-\frac{3}{2} t_1)^{\frac{2}{3}}, (-\frac{3}{2} t_1)^{-\frac{1}{3}}\tau\big)$.
Next, define $\tilde y(t_1,\tau):=\frac{\sqrt{2}}{(-\frac{3}{2} t_1)^{\frac{1}{3}}}\, y(x_1, r+z)$, for every $z:=(x_2,\ldots,x_n)\in\R^{n-1}$ \emph{fixed}, or equivalently
\begin{equation}\label{eqder}
y(x_1, r+z)=\frac{(-x_1)^{\frac{1}{2}}}{\sqrt{2}}\tilde y(t_1,\tau).
\end{equation}
Then, up to subsequence,
\begin{equation}\label{scale2p}
\lim_{l\to -\infty} \tilde y(t_1+l,t_2,\ldots, t_n)=u(t_1,t_2,\ldots,t_n),
\end{equation}
for the $C^1_{\mathrm{ loc}}(\R^n;\R^{m})$ convergence, where $u\in C^\infty(\R^n;\R^m)$ solves $\Delta u=|u|^2u-u$ in $\R^n$. In addition, if $y$ is a minimal solution of \eqref{pain 1}, then $u$ is also minimal.
\end{lemma}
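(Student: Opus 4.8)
The plan is to differentiate the defining relation \eqref{eqder} and show that, in the $t$-variables, $\tilde y$ satisfies a uniformly elliptic equation whose coefficients converge to those of \eqref{gl} as $l\to-\infty$; a compactness argument then produces the limit $u$. Write $\lambda:=(-x_1)^{1/2}=(-\tfrac32 t_1)^{1/3}$, so that $y=\tfrac{\lambda}{\sqrt2}\tilde y$, and note that as the base point is pushed to $t_1\to-\infty$ (i.e.\ $l\to-\infty$) the corresponding $x_1\to-\infty$ and hence $\lambda\to+\infty$. A direct chain-rule computation gives in the transverse directions the exact identity $\partial_{x_j}^2 y=\tfrac{\lambda^3}{\sqrt2}\,\partial_{t_j}^2\tilde y$ for $j\geq2$, while in the $x_1$-direction $\partial_{x_1}^2 y=\tfrac{\lambda^3}{\sqrt2}\,\partial_{t_1}^2\tilde y+R$, where the remainder $R$ carries at most the derivatives $\partial_{t_1}\tilde y$ and $\partial_{t_1 t_j}\tilde y$ with coefficients that are $O(1)$ on compact $t$-sets. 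Since $-x_1 y-2|y|^2 y=\tfrac{\lambda^3}{\sqrt2}\big(\tilde y-|\tilde y|^2\tilde y\big)$, equation \eqref{pain 1} becomes, after dividing by $\tfrac{\lambda^3}{\sqrt2}$,
\[
\Delta_t\tilde y+\mathcal L_\lambda\tilde y=|\tilde y|^2\tilde y-\tilde y,
\]
where $\mathcal L_\lambda$ is a second-order operator (including first- and zeroth-order parts) whose coefficients are $O(\lambda^{-3})$ uniformly on compact sets. Thus the shifted family $u^{(l)}(t):=\tilde y(t_1+l,\tau)$ solves, for $l$ sufficiently negative, a uniformly elliptic equation of the form $\mathcal A_l u^{(l)}=|u^{(l)}|^2u^{(l)}-u^{(l)}$ with $\mathcal A_l\to\Delta_t$ in the coefficients.

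Next I would run a standard bootstrap. Hypothesis \eqref{boundass} is precisely the statement that $|u^{(l)}|=\sqrt2\,|y|/\sqrt{|x_1|}$ is bounded on compact sets, uniformly in $l$, so the right-hand side $|u^{(l)}|^2u^{(l)}-u^{(l)}$ is bounded in $L^\infty_{\mathrm{loc}}$. Keeping the first-order part inside the operator $\mathcal A_l$, interior $W^{2,p}$ estimates give uniform $W^{2,p}_{\mathrm{loc}}$ bounds for every $p<\infty$, hence uniform $C^{1,\alpha}_{\mathrm{loc}}$ bounds by Sobolev embedding; Schauder estimates then upgrade these to uniform $C^{2,\alpha}_{\mathrm{loc}}$ bounds, and a further bootstrap yields uniform $C^k_{\mathrm{loc}}$ bounds for all $k$. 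By Arzel\`a--Ascoli I extract a subsequence $l_j\to-\infty$ with $u^{(l_j)}\to u$ in $C^1_{\mathrm{loc}}$ (in fact $C^2_{\mathrm{loc}}$). Since the coefficients of $\mathcal A_{l_j}$ tend to those of $\Delta_t$ uniformly on compacts, passing to the limit in the equation gives $\Delta u=|u|^2u-u$, and elliptic regularity gives $u\in C^\infty(\R^n;\R^m)$.

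For the minimality statement I would transport test functions. The same change of variables, together with the exact Jacobian $\mathrm dx=\lambda^{-n}\,\mathrm dt$ and the identity $\tfrac12 x_1|y|^2+\tfrac12|y|^4=\tfrac{\lambda^4}{8}\big((|\tilde y|^2-1)^2-1\big)$, gives for any bounded domain
\[
E_{\mathrm{P_{II}}}(y,\Omega)=\int_{\tilde\Omega}\Big[\tfrac{\lambda^{4-n}}{2}\,e_{\mathrm{GL}}(\tilde y)-\tfrac{\lambda^{4-n}}{8}+O(\lambda^{1-n})\Big]\,\mathrm dt,
\]
where $e_{\mathrm{GL}}$ denotes the integrand of $E_{\mathrm{GL}}$. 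Given $\phi\in C_0^\infty(\R^n;\R^m)$, I define the perturbation $\psi_l(x):=\tfrac{\lambda}{\sqrt2}\,\phi(t(x))$ (with the $l$-shift), which is compactly supported with $\mathrm{supp}\,\psi_l\subset\{x_1\leq-1\}$ once $l$ is sufficiently negative. Applying \eqref{minnn} to $y$ with $\psi_l$, the additive constants $-\tfrac{\lambda^{4-n}}{8}$ cancel because the two domains coincide; dividing by the leading factor $(\tfrac32|l|)^{(4-n)/3}$, which approximates $\lambda^{4-n}$ uniformly on the fixed support of $\phi$, the remaining contributions are of relative order $O(\lambda^{-3})$ and vanish. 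Letting $l_j\to-\infty$ along the convergent subsequence yields $E_{\mathrm{GL}}(u,\mathrm{supp}\,\phi)\leq E_{\mathrm{GL}}(u+\phi,\mathrm{supp}\,\phi)$, i.e.\ $u$ is minimal.

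The main obstacle is the bookkeeping in the $x_1$-direction: the second $x_1$-derivative produces, besides the desired leading term $\tfrac{\lambda^3}{\sqrt2}\partial_{t_1}^2\tilde y$, several corrections — including genuine second-order terms $\partial_{t_1 t_j}\tilde y$ — and one must verify that after division by $\lambda^3$ every correction carries a small, locally bounded coefficient, so that the operators $\mathcal A_l$ remain uniformly elliptic with coefficients converging to those of $\Delta_t$. The analogous care is needed in the energy expansion, where the gradient cross-terms and the Jacobian must be checked to contribute only at relative order $\lambda^{-3}$, so that the Ginzburg--Landau energy difference is the exact limit of the renormalized Painlev\'e energy difference.
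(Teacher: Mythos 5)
Your proposal is correct, and at its core it follows the same strategy as the paper: the identical change of variables, the identical observation that every correction term carries a coefficient of relative order $O(\lambda^{-3})=O\big((-\tfrac{3}{2}t_1)^{-1}\big)$ on compact $t$-sets, compactness via Arzel\`a--Ascoli, and minimality by transporting test functions and renormalizing the energy --- your expansion with the additive constant $-\lambda^{4-n}/8$ is exactly the paper's decomposition into $B(\tilde y)$ (which is $e_{\mathrm{GL}}-\tfrac14$) and the error $C(\tilde y)$, with the same cancellation and the same division by the leading factor. The one point where you genuinely diverge is how the limit equation is identified. The paper never writes the strong equation for $\tilde y$: it stays with the weak (variational) formulation, transports test functions $\tilde\phi^{-l}$ back to the $x$-variables, and estimates the error functional $A(\tilde y,\tilde\phi^{-l})$; the uniform bounds \eqref{eqder8}--\eqref{eqder9} needed for compactness are obtained by differentiating the chain-rule identities \eqref{eqderder} and applying elliptic estimates to $y$ in the original variables. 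You instead read off the classical equation $\Delta_t\tilde y+\mathcal L_\lambda\tilde y=(|\tilde y|^2-1)\tilde y$, observe that the shifted operators $\mathcal A_l$ are uniformly elliptic with smooth coefficients converging to those of $\Delta_t$, and run a standard interior $W^{2,p}$/Schauder bootstrap directly in the $t$-variables. Your route buys stronger compactness ($C^{2,\alpha}_{\mathrm{loc}}$, indeed all $C^k_{\mathrm{loc}}$) and makes the passage to the limit in the equation immediate and classical, at the price of verifying uniform ellipticity and uniform smoothness of the coefficients of $\mathcal A_l$; the paper's weak-form route needs only $C^1_{\mathrm{loc}}$ compactness and lets a single change-of-variables computation serve simultaneously for the equation and for the minimality inequality. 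Both arguments are complete; I see no gap in yours.
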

\begin{proof}

We are going to show that $\tilde y(t_1,\ldots,t_n)$ is uniformly bounded up to the second derivatives, when $\tau=(t_2,\ldots,t_n)$ belongs to a compact set and $t_1\to-\infty$.
By differentiating \eqref{eqder} with respect to $x_1$ and $r$ we obtain
\begin{subequations}\label{eqderder}
\begin{equation}\label{eqder1}
\sqrt{2}y_{x_i}(x_1, r+z)=(-x_1)\tilde y_{t_i}(t_1,\tau), \ \forall i=2,\ldots,n.
\end{equation}
\begin{equation}\label{eqder2}
\sqrt{2}y_{x_ix_j}(x_1, r+z)=(-x_1)^{\frac{3}{2}}\tilde y_{t_it_j}(t_1,\tau),\ \forall i,j=2,\ldots,n.
\end{equation}
\begin{equation}\label{eqder3}
 \sqrt{2}y_{x_1}(x_1, r+z)=-\frac{(-x_1)^{-\frac{1}{2}}}{2}\tilde y(t_1,\tau)+(-x_1) \tilde y_{t_1}(t_1,\tau)-\sum_{i=2}^n\frac{r_i}{2}\tilde y_{t_i}(t_1,\tau).
\end{equation}
\begin{equation}\label{eqder4}
 \sqrt{2}y_{x_1x_j}=-\tilde y_{t_j}+(-x_1)^{\frac{3}{2}}\tilde y_{t_1t_j}-(-x_1)^{\frac{1}{2}}\sum_{i=2}^n\frac{r_i}{2}\tilde y_{t_it_j}, \ \forall j=2,\ldots,n.
\end{equation}
\begin{equation}\label{eqder5}
 \sqrt{2}y_{x_1x_1}=-\frac{(-x_1)^{-\frac{3}{2}}}{4}\tilde y-\frac{3}{2}\tilde y_{t_1} +\frac{(-x_1)^{-1}}{4} \sum_{i=2}^n r_i \tilde y_{t_i}
+(-x_1)^{\frac{3}{2}}\tilde y_{t_1t_1}-(-x_1)^{\frac{1}{2}}\sum_{i=2}^n r_i \tilde y_{t_1t_i}+\frac{(-x_1)^{-\frac{1}{2}}}{4}\sum_{i,j=2}^nr_ir_j \tilde y_{t_i,t_j}.
\end{equation}
\end{subequations}
Since by assumption $y$ satisfies $|y(x)|=O(|-x_1|^{\frac{1}{2}})$ as $x_1\to-\infty$ (i.e. $\tilde y$ is bounded), we obtain by \eqref{pain 1} and standard elliptic estimates \cite[\S 3.4 p. 37 ]{1987130} that
\begin{equation}\label{eqder6}
\text{$|D y(x)|=O(|-x_1|^{\frac{3}{2}})$ and $|D^2 y(x)|=O(|-x_1|^{\frac{5}{2}})$, as $x_1\to-\infty$.}
\end{equation}
From \eqref{eqder6} and \eqref{eqderder} it follows that
\begin{equation}\label{eqder7}
\text{$|\nabla \tilde y(t)|=O(|-x_1|^{\frac{1}{2}})$ and $|D^2 \tilde y(t)|=O(|-x_1|)$, as $x_1\to-\infty$,}
\end{equation}
provided that $t=(t_1,\tau)\in \Sigma_{\alpha,\beta}:=\{ t\in \R^n: t_1\leq \alpha, \, |\tau|\leq \beta (-\frac{3}{2} t_1)^{\frac{1}{3}}\}$, where $\alpha<0$ and $\beta>0$ are arbitrary constants. In particular, we have $\sqrt{2}\Delta y(x_1,r+z)=(-x_1)^{\frac{3}{2}}\Delta \tilde y(t)+O(|-x_1|^{\frac{3}{2}})$, for $t\in \Sigma_{\alpha,\beta}$. On the other hand it is clear by \eqref{pain 1} that
$\sqrt{2}\Delta y(x_1,r+z)=(-x_1)^{\frac{3}{2}}(|\tilde y(t)|^2-1)\tilde y(t)$, thus
\begin{equation}\label{eqder8}
\text{$|\Delta \tilde y(t)|$ and $|\nabla\tilde y(t)|$ are bounded, $\forall t\in \Sigma_{\alpha,\beta}$.}
\end{equation}
Similarly, by differentiating once more equations \eqref{eqderder} with respect to $x_1$ and $r$, one can show that
\begin{equation}\label{eqder9}
\text{$|D^2\tilde y(t)|$ is bounded, $\forall t\in \Sigma_{\alpha,\beta}$.}
\end{equation}
Next, we apply the theorem of Ascoli to the sequence $\tilde y(t_1+ l,t_2,\ldots t_n)$ as $ l\to-\infty$. Up to a subsequence $l_k\to -\infty$, we obtain via a diagonal argument, the convergence in $C^1_{\mathrm{ loc}}(\R^n;\R^m)$ of $\tilde y_k(t_1,t_2,\ldots,t_n):=\tilde y(t_1+l_k,t_2,\ldots,t_n)$ to a bounded function $ u(t_1,t_2,\ldots,t_n)$ that we are going to determine. 

Let $(e_1,\ldots,e_n)$ be the canonical basis of $\R^n$, and let $\tilde \phi(t_1,\ldots,t_n)\in C^\infty_0(\R^n;\R^m)$ be a test function such that
$\tilde S:=\supp \tilde \phi \subset\{(t_1,\ldots,t_n): c-d\leq t_1\leq c\} $, for some constants $c \in\R$ and $d>0$.
Given $l\in\R$, we consider the translated functions $\tilde \phi^{-l}(t_1,\ldots,t_n):=\tilde \phi(t_1-l,t_2,\ldots,t_n)$, and
$\tilde y^l(t_1,\ldots,t_n):=\tilde y(t_1+l,t_2,\ldots,t_n)$.
Note that $\tilde S^{l}:=\supp \tilde \phi^{-l}=\tilde S+l e_1$, and
$\supp\tilde  \phi^{-l} \subset\{(t_1,\ldots,t_n):  t_1<-1\} $ when $l<-1-c$. Thus, for $l<1-c$, we can define $\phi^{-l}\in C^\infty_0(\R^n;\R^m)$ by $\phi^{-l}(x_1, r+z)=\frac{(-x_1)^{\frac{1}{2}}}{\sqrt{2}}\tilde \phi^{-l}(t_1,\tau)$
as in \eqref{eqder}, and we set $S^{l}:=\{(x_1(t_1),r(t_1,\ldots,t_n)+z): \, (t_1,\ldots,t_n)\in \tilde S^{l}\}$.
As a consequence, we have
\begin{equation}\label{weak}
\int_{\R^n}\nabla y\cdot\nabla \phi^{-l}+x_1 y\cdot\phi^{-l}+2|y|^2y\cdot\phi^{-l}=0, \ \forall l<-1-c,
\end{equation}
that becomes after changing variables as in (\ref{eqder}):
\begin{equation}\label{weak2}
\int_{\R^n}\frac{1}{2}\big(-\frac{3}{2}t_1\big)^{\frac{4-n}{3}}[\nabla \tilde y\cdot\nabla \tilde \phi^{-l}+( |\tilde y|^2-1)\tilde y\cdot\tilde \phi^{-l}+A(\tilde y,\tilde \phi^{-l})]=0, \ \forall l<-1-c,
\end{equation}
with
\begin{multline*}
A(\psi,\xi):=-\frac{1}{2}\big(-\frac{3}{2}t_1\big)^{-1}[\psi\cdot\xi_{t_1}+\psi_{t_1}\cdot \xi+\sum_{i=2}^nt_i (\psi_{t_i}\cdot\xi_{t_1}+\psi_{t_1}\cdot \xi_{t_i})]\\
+\frac{1}{4}\big(-\frac{3}{2}t_1\big)^{-2}[\psi+\sum_{i=2}^nt_i \psi_{t_i}][\xi+\sum_{j=2}^nt_j \xi_{t_j}].
\end{multline*}
On the other hand, in view of \eqref{eqder8}, one can see that
\begin{equation}\label{ll1}
(-t_1)^\beta=(-c-l_k)^\beta+O((-c-l_k)^{\beta-1}), \ \forall \beta<1, \ \forall (t_1,\ldots,t_n)\in \tilde S^{l_k},
\end{equation}
\begin{equation*}\label{ll2}
\int_{\R^n}(-t_1)^{\frac{4-n}{3}}A(\tilde y,\tilde \phi^{-l_k})=O((-c-l_k)^{\frac{1-n}{3}}), 
\end{equation*}
and
\begin{equation*}\label{ll3}
\int_{\R^n}(-t_1)^{\frac{4-n}{3}}[\nabla \tilde y\cdot\nabla \tilde \phi^{-l_k}+( |\tilde y|^2-1)\tilde y\cdot\tilde \phi^{-l_k}]=(-c-l_k)^{\frac{4-n}{3}}\int_{\R^n}[\nabla \tilde y\cdot\nabla \tilde \phi^{-l_k}+( |\tilde y|^2-1)\tilde y\cdot\tilde \phi^{-l_k}]+O((-c-l_k)^{\frac{1-n}{3}}).
\end{equation*}
Gathering the previous results, it follows from \eqref{weak2} that
\begin{equation*}\label{ll4}
(-c-l_k)^{\frac{4-n}{3}}\int_{\R^n}[\nabla \tilde y\cdot\nabla \tilde \phi^{-l_k}+( |\tilde y|^2-1)\tilde y\cdot\tilde \phi^{-l_k}]+O((-c-l_k)^{\frac{1-n}{3}})=0.
\end{equation*}
Finally, since
\begin{align*}\label{ll4}
\lim_{k\to\infty}\int_{\R^n}[\nabla \tilde y\cdot\nabla \tilde \phi^{-l_k}+( |\tilde y|^2-1)\tilde y\cdot\tilde \phi^{-l_k}]&=
\lim_{k\to\infty}\int_{\R^n}[\nabla \tilde y_k \cdot\nabla \tilde \phi+( |\tilde y_k|^2-1)\tilde y\cdot\tilde \phi]\\
&=\int_{\R^n}[\nabla u\cdot\nabla \tilde \phi+( |\tilde u|^2-1)u\cdot\tilde \phi],
\end{align*}
we deduce that $\int_{\R^n}[\nabla u\cdot\nabla \tilde \phi+( |\tilde u|^2-1)u\cdot\tilde \phi]$, $\forall \tilde \phi \in C^\infty_0(\R^n;\R^m)$, i.e. $u$ solves $\Delta u=|u|^2u-u$ in $\R^n$.

Similarly, if $y$ is a minimal solution of \eqref{pain 1}, we have
\begin{equation}\label{weakb}
E_{\mathrm{P_{II}}}(y, S^l)\leq E_{\mathrm{P_{II}}}(y+ \phi^{-l},S^l), \ \forall l<-1-c,
\end{equation}
that becomes after changing variables as in (\ref{eqder}):
\begin{equation}\label{weak2b}
\int_{\tilde S^l}\frac{1}{2}\big(-\frac{3}{2}t_1\big)^{\frac{4-n}{3}}[B( \tilde y)+C(\tilde y)]\leq  \int_{\tilde S^l}\frac{1}{2}\big(-\frac{3}{2}t_1\big)^{\frac{4-n}{3}}[B(\tilde y+\tilde \phi^{-l})+C(\tilde y+\tilde \phi^{-l})], \ \forall l<-1-c,
\end{equation}
with
\begin{equation}\label{bdef}
B(\psi)=[\frac{1}{2}|\nabla \psi|^2-\frac{|\psi|^2}{2}+\frac{|\psi|^4}{4}],
\end{equation}
and
\begin{equation}\label{cdef}
C(\psi):=-\frac{1}{2}\big(-\frac{3}{2}t_1\big)^{-1}[\psi\cdot\psi_{t_1}+\sum_{i=2}^n t_i (\psi_{t_i}\cdot\psi_{t_1})]+\frac{1}{8}\big(-\frac{3}{2}t_1\big)^{-2}|\psi +\sum_{i=2}^nt_i \psi_{t_i}|^2.
\end{equation}
As previously \eqref{ll1} holds, and one can see that
\begin{equation*}\label{ll2}
\int_{\tilde S^{l_k}}(-t_1)^{\frac{4-n}{3}}C(\tilde y)=O((-c-l_k)^{\frac{1-n}{3}}),\ \int_{\tilde S^{l_k}}(-t_1)^{\frac{4-n}{3}}C(\tilde y+\tilde \phi^{-l_k})=O((-c-l_k)^{\frac{1-n}{3}}),
\end{equation*}
\begin{equation*}\label{ll3}
\int_{\tilde S^{l_k}}(-t_1)^{\frac{4-n}{3}}B(\tilde y)=(-c-l_k)^{\frac{4-n}{3}}\int_{\tilde S^{l_k}}B(\tilde y)+O((-c-l_k)^{\frac{1-n}{3}}),
\end{equation*}
and
\begin{equation*}\label{ll3}
\int_{\tilde S^{l_k}}(-t_1)^{\frac{4-n}{3}}B(\tilde y+\tilde \phi^{-l_k})=(-c-l_k)^{\frac{4-n}{3}}\int_{\tilde S^{l_k}}B(\tilde y+\tilde \phi^{-l_k})+O((-c-l_k)^{\frac{1-n}{3}}).
\end{equation*}
Gathering the previous results, it follows from \eqref{weak2b} that
\begin{equation*}\label{ll4}
(-c-l_k)^{\frac{4-n}{3}}E_{\mathrm{GL}}(\tilde y, \mathrm{supp}\, \tilde \phi^{-l_k})\leq (-c-l_k)^{\frac{4-n}{3}}E_{\mathrm{GL}}(\tilde y+\tilde \phi^{-l_k}, \mathrm{supp}\, \tilde \phi^{-l_k})+O((-c-l_k)^{\frac{1-n}{3}}),
\end{equation*}
or equivalently
\begin{equation*}\label{ll5}
E_{\mathrm{GL}}(\tilde y_k, \mathrm{supp}\, \tilde \phi)
\leq
E_{\mathrm{GL}}(\tilde y_k+\tilde\phi, \mathrm{supp}\, \tilde \phi)
+O((-c-l_k)^{-1}),
\end{equation*}
Finally, passing to the limit, we obtain
\begin{equation*}\label{ll4}
E_{\mathrm{GL}}(u, \mathrm{supp}\, \tilde \phi)\leq E_{\mathrm{GL}}(u+\tilde \phi, \mathrm{supp}\, \tilde \phi),
\end{equation*}
i.e. $u$ is a minimal solution of \eqref{gl}.

\end{proof}

Next, we establish some properties of minimal solutions of \eqref{pain 1}. More precisely, we prove in Lemma \ref{lemzero} that a minimal solution is not identically zero, while in Lemma \ref{cara} (resp. Lemma \ref{lema}) we characterize the one dimensional minimal solutions of \eqref{pain 1} (resp. the nonnegative minimal solution of the scalar equation \eqref{scalarpde}).

\begin{lemma}\label{lemzero}
Let $y:\R^n\to\R^m$ be a minimal solution of \eqref{pain 1}. Then, $y$ is not identically $0$.
\end{lemma}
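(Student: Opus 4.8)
The plan is to argue by contradiction: assuming $y\equiv 0$, I would exhibit a single compactly supported perturbation $\phi$ whose energy $E_{\mathrm{P_{II}}}(\phi,\supp\phi)$ is strictly negative, which violates the minimality inequality \eqref{minnn}, since $E_{\mathrm{P_{II}}}(0,\supp\phi)=0$. The mechanism that makes this possible is the sign-changing linear coefficient $x_1$ in the potential $H$: on the half-space $\{x_1\ll 0\}$ the term $\tfrac12 x_1|u|^2$ is negative and, after localizing there, it dominates both the Dirichlet energy and the quartic term.

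Concretely, I would fix a nonzero $\psi\in C^\infty_0(\R^n;\R^m)$ supported in the unit ball, and for a parameter $T>0$ set $\psi_T(x):=\psi(x_1+T,x_2,\ldots,x_n)$, so that $\supp\psi_T\subset\{x_1\leq -T+1\}$. Then $\int|\nabla\psi_T|^2=\int|\nabla\psi|^2$ and $\int|\psi_T|^4=\int|\psi|^4$ are independent of $T$, whereas $\int x_1|\psi_T|^2\leq(-T+1)\int|\psi|^2$. Hence the quadratic part $Q(T):=\int\big(|\nabla\psi_T|^2+x_1|\psi_T|^2\big)$ satisfies $Q(T)\to-\infty$ as $T\to\infty$; fix $T$ so large that $Q(T)<0$.

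Finally I would rescale the amplitude. Putting $\phi=\lambda\psi_T$ with $\lambda>0$, one has
\begin{equation*}
E_{\mathrm{P_{II}}}(\phi,\supp\phi)=\frac{\lambda^2}{2}\,Q(T)+\frac{\lambda^4}{2}\int|\psi_T|^4 .
\end{equation*}
Since $Q(T)<0$, the quadratic term dominates the (positive) quartic term for $\lambda$ small, so $E_{\mathrm{P_{II}}}(\phi,\supp\phi)<0$ for all sufficiently small $\lambda>0$. This contradicts $E_{\mathrm{P_{II}}}(y,\supp\phi)=E_{\mathrm{P_{II}}}(0,\supp\phi)=0\leq E_{\mathrm{P_{II}}}(y+\phi,\supp\phi)=E_{\mathrm{P_{II}}}(\phi,\supp\phi)$, proving $y\not\equiv 0$.

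There is no serious obstacle here; the only point requiring care is the order of the two limits. One must first push the support into $\{x_1\ll 0\}$ to render the quadratic form negative, and only then take the amplitude small, so that the quartic term (which is what would otherwise keep the energy nonnegative) becomes subdominant. Equivalently, this is just the statement that the second variation of $E_{\mathrm{P_{II}}}$ at $y\equiv 0$, namely the Schr\"odinger-type quadratic form $u\mapsto\int\big(|\nabla u|^2+x_1|u|^2\big)$, fails to be positive semidefinite on compactly supported fields, precisely because the potential $x_1$ is unbounded below.
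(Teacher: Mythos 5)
Your proof is correct and rests on exactly the same mechanism as the paper's: translating a fixed compactly supported test function into the region $x_1\ll 0$, where the unbounded-below coefficient $x_1$ makes the quadratic form $\int\big(|\nabla\phi|^2+x_1|\phi|^2\big)$ negative. The paper phrases this as the failure of nonnegativity of the second variation of $E_{\mathrm{P_{II}}}$ at $y\equiv 0$, which is precisely what your small-amplitude scaling $\phi=\lambda\psi_T$ establishes directly at the level of the energy, so the two arguments coincide in substance.
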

\begin{proof}
The minimality of $y$ implies that the second variation of the energy $E_{\mathrm{P_{II}}}$ is nonnegative:
\begin{equation}\label{secondvabis}
\int_{\R^n} (|\nabla \phi(x)|^2+(2|y(x)|^2+4(y\cdot\phi)^2+x_1)|\phi(x)|^2)\dd x \geq 0,\quad  \forall \phi\in C^1_0(\R^n;\R^m). 
\end{equation}
Clearly \eqref{secondvabis} does not hold when $y\equiv 0$, if we take $\phi(x_1,z)=\phi_0(x_1+l,z)$, with $l\to\infty$, and $\phi_0\in C^1_0(\R^n;\R^m)$, such that $\phi_0 \not\equiv 0$.
\end{proof}

\begin{lemma}\label{cara}
Let $y:\R\to\R^m$ be a solution of \eqref{pain 1} which is bounded at $+\infty$. Then, $y$ is minimal iff $y(x)= h(x)\n$, for some unit vector $\n\in \R^m$.
\end{lemma}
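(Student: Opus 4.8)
The plan is to prove the two implications separately, reducing in each case the vector problem to the scalar Painlev\'e equation \eqref{pain 0} recalled in Subsection \ref{subsec:1dd}. The two analytic inputs I will use are the pointwise inequality $|u'|^2\geq\big||u|'\big|^2$ (the vector energy density dominates the energy density of its modulus) and the conservation law expressing that $y''$ is parallel to $y$.

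For the implication $(\Leftarrow)$ I would establish directly that $y=h\n$ is minimal. Given a competitor $u=h\n+\phi$ with $\phi\in C^\infty_0(\R;\R^m)$ and $\Omega:=\supp\phi$, the bound $|u'|^2\geq\big||u|'\big|^2$ together with the fact that the potential $\tfrac12 x|u|^2+\tfrac12|u|^4$ depends only on $|u|$ gives $E_{\mathrm{P_{II}}}(u,\Omega)\geq E_{\mathrm{sc}}(|u|,\Omega)$, where $E_{\mathrm{sc}}$ denotes the scalar functional, i.e. \eqref{funcp} with $m=1$. Since $u=h\n$ outside $\Omega$, the perturbation $|u|-h$ is supported in $\Omega$, so the scalar minimality of $h$ yields $E_{\mathrm{sc}}(h,\Omega)\leq E_{\mathrm{sc}}(|u|,\Omega)$. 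Because $|h\n|=h$ and $|(h\n)'|=|h'|$ imply $E_{\mathrm{P_{II}}}(h\n,\Omega)=E_{\mathrm{sc}}(h,\Omega)$, chaining these three relations gives $E_{\mathrm{P_{II}}}(h\n,\Omega)\leq E_{\mathrm{P_{II}}}(u,\Omega)$, as desired.

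For $(\Rightarrow)$, assume $y$ is minimal, hence $y\not\equiv0$ by Lemma \ref{lemzero}, and bounded at $+\infty$. The heart of the matter is that $y$ has constant direction. In one dimension equation \eqref{pain 1} reads $y''=(x+2|y|^2)y$, so $y''\parallel y$ and the antisymmetric tensor $y\otimes y'-y'\otimes y$ is independent of $x$. By Lemma \ref{expcvv} one has $|y|=O(e^{-\frac{2}{3}x^{3/2}})$ as $x\to+\infty$, and the equation forces $y'$ to remain bounded (indeed to decay) there; hence the constant tensor vanishes, i.e. $y$ and $y'$ are everywhere parallel. Fix $x_0$ with $y(x_0)\neq0$ and set $\n:=y(x_0)/|y(x_0)|$. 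For any unit vector $w\perp\n$, the scalar functions $\rho:=y\cdot\n$ and $p:=y\cdot w$ solve the same linear equation $v''=(x+2|y|^2)v$, and the vanishing of the tensor says precisely that their Wronskian $\rho p'-p\rho'$ vanishes identically; evaluating at $x_0$ (where $p(x_0)=0$ and $\rho(x_0)\neq0$) gives $p'(x_0)=0$, whence $p\equiv0$ by uniqueness. Therefore $y=\rho\n$ with $\rho$ a solution of \eqref{pain 0} bounded at $+\infty$. Finally, testing the minimality of $y$ against perturbations of the form $\psi\n$ (with $\psi\in C^\infty_0(\R)$) shows that $\rho$ is a minimal scalar solution, so $\rho=\pm h$ by the classification of Subsection \ref{subsec:1dd}, and thus $y=h\n$ after possibly replacing $\n$ by $-\n$.

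The conceptual core is the constant-direction reduction in $(\Rightarrow)$; once $y=\rho\n$ is known, the statement follows from the scalar theory. I expect the only delicate points to be the vanishing of the conserved tensor, which needs the decay of $y'$ at $+\infty$ (obtained by upgrading Lemma \ref{expcvv} through the equation), and, in $(\Leftarrow)$, the admissibility of the merely Lipschitz function $|u|$ as a scalar competitor: this is harmless, since $|u|-h$ has compact support and $|u|\in H^1_{\mathrm{loc}}$, so the scalar minimality of $h$ applies after the usual density approximation.
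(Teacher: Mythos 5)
Your proof is correct. The backward implication coincides with the paper's argument (radialize the competitor via $\bigl||u|'\bigr|\leq |u'|$, use that the potential depends only on $|u|$, and invoke the scalar minimality of $h$), but your forward implication takes a genuinely different route. The paper establishes the constant direction of $y$ variationally: it compares $y$ with the competitor $\xi$ equal to $y$ on $(-\infty,x_0]$ and to $|y|\n_0$ on $[x_0,\infty)$, uses the decay of Lemma \ref{expcvv} to patch $\xi$ back to $y$ near a far point at energy cost less than $\epsilon$, deduces from minimality the equality $E_{\mathrm{P_{II}}}(\xi,[x_0,\infty))=E_{\mathrm{P_{II}}}(y,[x_0,\infty))$, and reads off $\n'\equiv 0$ from $|y'|^2=\bigl||y|'\bigr|^2+|y|^2|\n'|^2$ on an interval where $y\neq 0$; only then does it use the linear equation $\chi''=(x+2|y|^2)\chi$ and ODE uniqueness to propagate the collinearity to all of $\R$. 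You instead get collinearity from the integrable structure of the ODE: the tensor $y_iy_j'-y_jy_i'$ is conserved because $y''\parallel y$, it tends to $0$ at $+\infty$ by Lemma \ref{expcvv} together with the boundedness of $y'$ extracted from the equation (indeed $|y''|\leq(x+2\sup|y|^2)|y|\to 0$ super-exponentially, so interpolation gives $y'\to 0$), hence it vanishes identically, and the Wronskian evaluated at $x_0$ kills every component of $y$ orthogonal to $\n$. Your route is shorter and avoids the cut-and-paste energy estimates; moreover it uses minimality only through Lemma \ref{lemzero} and through the final reduction to the scalar classification, so it actually proves the stronger structural fact that \emph{every} solution $y:\R\to\R^m$ of \eqref{pain 1} bounded at $+\infty$ and not identically zero has constant direction $y=\rho\n$ with $\rho$ a scalar solution of \eqref{pain 0}. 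What the paper's argument buys in exchange is robustness: the radializing-competitor technique does not rely on a conservation law special to the one-dimensional problem, and the same device reappears in the PDE setting (e.g.\ in the proof of Lemma \ref{lem1}). The two points you flag are indeed harmless: the decay of $y'$ follows as above, and extending the minimality \eqref{minnn} from $C^\infty_0$ to compactly supported $H^1$ perturbations is immediate in one dimension, where $H^1$ embeds locally in $L^\infty$.
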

\begin{proof}
Assume that $y$ is minimal. Since $y\not\equiv 0$ (cf. Lemma \ref{lemzero}), let $x_0\in\R$ be such that $y(x_0)\neq 0$, and let $\n_0:=\frac{y(x_0)}{|y(x_0)|}$. Next, we consider the competitor map
\begin{equation}
\xi(x)=\begin{cases}
y(x)  &\text{when } x\leq x_0, \\
|y(x)|\n_0  &\text{when } x\geq x_0.
\end{cases}
\end{equation}
It is clear that $|\xi'|\leq |y'|$ holds on $\R$, and that $E_{\mathrm{P_{II}}}(\xi,[a,b])\leq E_{\mathrm{P_{II}}}(y,[a,b])$, $\forall a<b$. Assume by contradiction that $E_{\mathrm{P_{II}}}(\xi,[x_0,\infty))+2\epsilon< E_{\mathrm{P_{II}}}(y,[x_0,\infty))$, for some $\epsilon>0$. This implies that for $b>b_0$ large enough we have $E_{\mathrm{P_{II}}}(\xi,[x_0,b])+\epsilon< E_{\mathrm{P_{II}}}(y,[x_0,b])$. Setting
\begin{equation}
\psi(x)=\begin{cases}
\xi(x)  &\text{when } x\in [x_0,b] \\
(y(b+1)-|y(b)|\n_0 )(x-b)+|y(b)|\n_0 &\text{when } x\in [b,b+1],
\end{cases}
\end{equation}
it follows from Lemma \ref{expcvv} that $E_{\mathrm{P_{II}}}(\psi,[b,b+1])<\epsilon$ provided that $b\geq b_1\geq b_0$ is large enough. Thus we deduce that $E_{\mathrm{P_{II}}}(\psi,[x_0,b_1+1])< E_{\mathrm{P_{II}}}(y,[x_0,b_1+1])$, with $y(x_0)=\psi(x_0)$, and $y(b_1+1)=\psi(b_1+1)$, in contradiction with the minimality of $y$. This proves that $E_{\mathrm{P_{II}}}(\xi,[x_0,\infty))= E_{\mathrm{P_{II}}}(y,[x_0,\infty))$. In particular, we have  $E_{\mathrm{P_{II}}}(\xi,[x_0,b])=E_{\mathrm{P_{II}}}(y,[x_0,b])$, on an interval $[x_0,b]$ where $y\neq 0$, and  setting $\n(x):=\frac{y(x)}{|y(x)|}$ on $[x_0,b]$, we obtain  $|\xi'|^2=|y'|^2=|\xi'|^2+|\xi|^2|\n'|^2$ on $[x_0,b]$.
As a consequence, $\n'\equiv 0$, and $y(x)=|y(x)|\n_0$ hold on $[x_0,b]$. Finally, for any unit vector $\nu$ perpendicular to $\n_0$, let $\chi(x)=y(x)\cdot \nu$. In view of \eqref{pain 1}, $\chi$ solves $\chi''=x\chi+2|y|^2\chi$ on $\R$, and since  $\chi\equiv 0$ on $[x_0,b]$, we conclude by the uniqueness result for O.D.E. that $\chi\equiv 0$ on $\R$. This proves that $y(x)=(y(x)\cdot\n_0)\n_0$ holds on $\R$. Therefore, it follows from the characterization of minimal solutions of \eqref{pain 0} established in \cite[Theorem 1.3 (i)]{Clerc2017}, that $y(x)=h(x)\n_0$, $\forall x\in\R$.

Conversely, let $y(x)=h(x)\n$, for some unit vector $\n\in\R^m$, and let us check that $y$ is minimal. 
For every test function $\phi\in H^1_0([a,b];\R^m)$, let $\xi(x):=|y(x)+\phi(x)|\n$. Since $|\xi'|\leq |y'+\phi'|$ holds on $\R$, we have $E_{\mathrm{P_{II}}}(\xi,[a,b])\leq E_{\mathrm{P_{II}}}(y+\phi,[a,b])$. On the other hand, it follows from the minimality of $h:\R\to\R$ that 
$E_{\mathrm{P_{II}}}(y,[a,b])\leq E_{\mathrm{P_{II}}}(\xi,[a,b])$. This establishes that $y$ is minimal.

\end{proof}
\begin{lemma}\label{lema}\footnote{A similar result holds for the nonnegative minimal solutions of the Allen-Cahn equation \eqref{ac} (cf. for instance \cite[Corollary 5.2]{book} which also applies in the vector case).} 
Let $y:\R^n\to\R$ be a nonnegative minimal solution of \eqref{scalarpde} such that $y(x)\leq h(x_1)$, $\forall x\in\R^n$. Then $y(x_1,\ldots,x_n)=h(x_1)$, $\forall x\in \R^n$.
\end{lemma}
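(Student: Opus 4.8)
The plan is to show that $\bar y(x):=h(x_1)$ is the maximal admissible competitor and that any strictly smaller minimal solution would have strictly larger energy, contradicting \eqref{minnn}.

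First I would record two preliminary facts. Since $h$ solves \eqref{pain 0} and $\bar y$ is independent of $x_2,\dots,x_n$, the function $\bar y$ solves \eqref{scalarpde}; moreover it is minimal in $\R^n$, as one sees by freezing $z=(x_2,\dots,x_n)$, discarding the nonnegative term $\tfrac12|\nabla_z(\bar y+\phi)|^2$, and invoking the one–dimensional minimality of $h$ (Lemma \ref{cara}) on each line parallel to the $x_1$–axis. By Lemma \ref{lemzero}, $y\not\equiv 0$, and since $y\ge 0$ solves $\Delta y=(x_1+2y^2)y$, the strong maximum principle gives $y>0$ everywhere. Setting $v:=\bar y-y\ge 0$ and subtracting the two equations, $v$ solves the linear equation $\Delta v=\big(x_1+2(h^2+hy+y^2)\big)v$ with locally bounded zeroth–order coefficient; the strong maximum principle then yields the dichotomy: either $v\equiv 0$ (in which case we are done) or $v>0$ throughout $\R^n$. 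I assume from now on that $v>0$ and seek a contradiction.

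The core is an energy identity. Using $\Delta\bar y=x_1\bar y+2\bar y^3$ and integrating by parts on a ball $B_R$, the linear-in-$v$ terms cancel and I obtain
\[
E_{\mathrm{P_{II}}}(y,B_R)-E_{\mathrm{P_{II}}}(\bar y,B_R)=\int_{B_R}\Big(\tfrac12|\nabla v|^2+\tfrac{v^2}{2}\big(v^2-4\bar y v+x_1+6\bar y^2\big)\Big)-\int_{\partial B_R}(\partial_\nu\bar y)\,v .
\]
The bulk integrand is nonnegative: for $0\le v\le\bar y$ the quadratic $v^2-4\bar yv+x_1+6\bar y^2$ is decreasing in $v$, so it is bounded below by its value at $v=\bar y$, namely $x_1+3h(x_1)^2\ge 0$. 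This last inequality is the one genuinely special fact to be checked; it follows from the asymptotics \eqref{asy0} together with the monotonicity of $h$, which give $h(x_1)^2\ge (-x_1)/3$ for $x_1\le 0$. One in fact gets the coercive lower bound $\tfrac12|\nabla v|^2+c\,(1+|x_1|)v^2$ for $x_1\le 0$.

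Next I would use minimality of $y$ against the competitor $w:=y+\eta_R v$, where $\eta_R\in C^\infty_0(B_{2R})$, $0\le\eta_R\le1$ and $\eta_R\equiv1$ on $B_R$; note $w\equiv\bar y$ on $B_R$ and $w\equiv y$ outside $B_{2R}$. Splitting the energies over $B_R$ and the annulus $A=B_{2R}\setminus B_R$, \eqref{minnn} yields $E_{\mathrm{P_{II}}}(y,B_R)-E_{\mathrm{P_{II}}}(\bar y,B_R)\le E_{\mathrm{P_{II}}}(w,A)-E_{\mathrm{P_{II}}}(y,A)$, which combined with the identity gives
\[
\int_{B_R}\Big(\tfrac12|\nabla v|^2+\tfrac{v^2}{2}\big(v^2-4\bar y v+x_1+6\bar y^2\big)\Big)\le \big(E_{\mathrm{P_{II}}}(w,A)-E_{\mathrm{P_{II}}}(y,A)\big)+\int_{\partial B_R}(\partial_\nu\bar y)\,v .
\]
If the right-hand side is negligible compared with the nonnegative left-hand side as $R\to\infty$, then $\int_{\R^n}(\cdots)=0$, forcing $v\equiv 0$ — the desired contradiction, whence $y\equiv h(x_1)$.

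The main obstacle is precisely the control of this right-hand side, that is, showing the annular and boundary terms are of lower order than the bulk term. This is where the global (not merely stable) character of minimality is essential and where the behaviour of the defect $v$ at infinity must be understood: for $x_1\to+\infty$ one has $|y|,\bar y=O(e^{-\frac{2}{3}x_1^{3/2}})$ by Lemma \ref{expcvv}, so the contribution of $\{x_1>0\}$ is harmless; for $x_1\to-\infty$ the Painlev\'e rescaling of Lemma \ref{ass} turns $y$ into a nonnegative minimal solution $u$ of \eqref{gl} with $0\le u\le 1$, and the analogous Ginzburg--Landau/Allen--Cahn rigidity (the vector statement quoted in the footnote) forces $u\equiv 1$, i.e. $v$ becomes negligible after rescaling. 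Feeding this asymptotic smallness of $v$ back into the estimate of the annular energy (choosing, if needed, a good sequence of radii $R_k$ to absorb the boundary terms) should make the right-hand side vanish in the limit and close the argument.
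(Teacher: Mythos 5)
Your reduction is set up correctly: the identity
\[
E_{\mathrm{P_{II}}}(y,B_R)-E_{\mathrm{P_{II}}}(\bar y,B_R)=\int_{B_R}\Bigl(\tfrac12|\nabla v|^2+\tfrac{v^2}{2}\bigl(v^2-4\bar y v+x_1+6\bar y^2\bigr)\Bigr)-\int_{\partial B_R}(\partial_\nu\bar y)\,v
\]
is valid (the linear terms cancel because $\bar y=h(x_1)$ solves \eqref{scalarpde}), and the dichotomy $v\equiv0$ or $v>0$ is sound. But there are two genuine gaps. The first is the pointwise inequality $x_1+3h(x_1)^2\ge 0$ for all $x_1$, on which the nonnegativity of your bulk term entirely rests. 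Your justification of it is invalid: the asymptotics \eqref{asy0} control $h$ only as $x_1\to\pm\infty$, and monotonicity gives only $h(x_1)\ge h(0)$ for $x_1\le 0$, which yields the inequality merely on $[-3h(0)^2,+\infty)$; on the remaining compact interval these two facts say nothing. The bound $h^2\ge -x_1/3$ is a nontrivial quantitative property of the Hastings--McLeod solution (it appears true numerically, but note that $h$ lies \emph{below} $\sqrt{-x_1/2}$ as $x_1\to-\infty$, so there is no soft comparison with the parabola), and nothing in this paper or in the facts you cite establishes it. As written, this step is unproven.

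The second gap is the one you yourself flag as ``the main obstacle'', and it is precisely where the whole difficulty of the lemma sits: the tools you invoke cannot close it. Lemma \ref{expcvv} controls the region $x_1\to+\infty$, and Lemma \ref{ass} controls the rescaled regime $x_1\to-\infty$, but neither gives any decay of $v=h(x_1)-y$ in the slab $\{|x_1|\le C,\ |z|\to\infty\}$, and that slab dominates your error terms: its intersection with $\partial B_R$ has area of order $R^{n-2}$, and its intersection with the annulus $B_{2R}\setminus B_R$ has volume of order $R^{n-1}$, so without a rate of decay for $v$ there the right-hand side of your inequality can grow like a power of $R$ and no contradiction follows (the rescaling of Lemma \ref{ass} gives only locally uniform convergence along subsequences, with no rate, which cannot be integrated over such large sets). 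Worse, asymptotic control of a solution in exactly this slab is the content of Lemma \ref{ass22}, which the paper proves \emph{using} Lemma \ref{lema}, so appealing to it here would be circular. The paper's own proof avoids both issues by an entirely different mechanism: it compares $y$ from below with the Dirichlet minimizers $y_R$ on balls (a cut-and-paste energy argument plus unique continuation gives $y_R\le y$), establishes monotonicity of $y_R$ in $x_n$ by the moving plane method, passes to the limit $R\to\infty$, and then runs an induction on the dimension $n$: the limits of $y_\infty$ as $x_n\to\pm\infty$ are nonnegative minimal solutions in $\R^{n-1}$ bounded by $h$, hence equal to $h(x_1)$ by the induction hypothesis, and the squeeze $h=y_\infty\le y\le h$ concludes. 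That route requires no quantitative lower bound on $h$ and no decay of $v$ at infinity --- which are exactly the two inputs your energy-comparison scheme cannot supply.
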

\begin{proof}
First of all we show that $y>0$. Indeed, if $y(p)=0$ for some $p\in\R^n$, then the maximum principle implies that $y \equiv 0$, and this is ruled out by Lemma \ref{lemzero}. Let $B_R\subset \R^n$ be the open ball of radius $R$, centered at the origin, and let $y_R$ be a minimizer of $E_{\mathrm{P_{II}}}$ in $H_0^{1}(B_R;\R)$ (cf. Lemma \ref{lem1} for the existence of minimizers). We notice that $|y_R|$ is also a minimizer of $E_{\mathrm{P_{II}}}$ in $H_0^{1}(B_R;\R)$, thus we can choose $y_R$ such that $y_R\geq 0$. It is clear that $y_R\in C^\infty(\overline B_R;\R)$ is a smooth solution of \eqref{scalarpde} in $B_R$. To establish Lemma \ref{lema}, we shall compare $y$ with $y_R$. Our claim is that $y_R\leq y$, $\forall R$. Indeed, assume by contradiction that $S:=\{x\in B_R: y_R>y \}\neq \emptyset$, and let $\phi:=\max(y_R-y,0)\in H_0^{1}(B_R;\R)$. On the one hand, we have by the minimality of $y$ that
\begin{equation*}\label{mm1}
 E_{\mathrm{P_{II}}}(y,B_R)\leq E_{\mathrm{P_{II}}}(y+\phi,B_R)=E_{\mathrm{P_{II}}}(y_R,S)
+E_{\mathrm{P_{II}}}(y,B_R\setminus S),
\end{equation*}
thus $ E_{\mathrm{P_{II}}}(y,S)\leq E_{\mathrm{P_{II}}}(y_R,S)$. On the other hand, by definition of the minimizer $y_R$, we obtain
\begin{equation*}\label{mm2}
E_{\mathrm{P_{II}}}(y_R,B_R)\leq E_{\mathrm{P_{II}}}(y_R-\phi,B_R)=E_{\mathrm{P_{II}}}(y,S)
+E_{\mathrm{P_{II}}}(y_R,B_R\setminus S),
\end{equation*}
that is, $E_{\mathrm{P_{II}}}(y_R,S)\leq E_{\mathrm{P_{II}}}(y,S)$. As a consequence, 
$E_{\mathrm{P_{II}}}(y_R,S)= E_{\mathrm{P_{II}}}(y,S)$, and the function $ \tilde y_R(x):=\min 
(y_R,y)$ is another minimizer of $E_{\mathrm{P_{II}}}$ in $H_0^{1}(B_R;\R)$. In particular $\tilde y_R$ is a $C^\infty(\overline B_R;\R)$ smooth solution of \eqref{scalarpde} in $B_R$. Finally, since $y_R$ and $\tilde y_R$ coincide on an open ball $B\subset \{x\in B_R:y_R(x)<y(x)\}\neq \emptyset$, it follows by unique continuation that $y_R\equiv \tilde y_R$, which is a contradiction.

At this stage, we are going to prove that $y(x_1,\ldots,x_n)=h(x_1)$, $\forall x\in \R^n$ by induction on the dimension $n$. For $n=1$ the statement is true, since $h$ is the only nonnegative minimal solution of O.D.E. \eqref{pain 0}, which is bounded at $+\infty$ (cf. \cite[Theorem 1.3 (i)]{Clerc2017}). Now, assume that $n\geq 2$. We will first establish that $y_R>0$ on $B_R$, provided that $R$ is large enough. As we mentioned before, the existence of $p\in B_R$ such that $y_R(p)=0$ implies that $y_R\equiv 0$. On the other hand, one can see that $0$ is not a minimizer of $E_{\mathrm{P_{II}}}$ in $H_0^{1}(B_R;\R)$, when $R$ is large enough.
Indeed, by taking $\phi(x_1,z)=\phi_0(x_1+l,z)$, with $z=(x_2,\ldots,x_n)$, $l>0$, and $\phi_0\in C^
\infty_0(B_1;\R)$ such that $\phi_0 \not\equiv 0$, we have $\phi \in H_0^{1}(B_{l+1};\R)$, and 
$E_{\mathrm{P_{II}}}(\phi,B_{l+1})<0$, provided that $l$ is large enough. Our claim, is that $\frac{\partial y_R}{\partial x_n}(x)<0$ (resp. $\frac{\partial y_R}{\partial x_n}(x)>0$) provided that $x\in B_R$ (with $R$ large enough) and $x_n>0$ (resp. $x\in B_R$ and $x_n>0$). This follows from the moving plane method applied to $\psi_\lambda(x)=y_R(x)-y_R(x_1, x_2, \ldots,x_{n-1},2\lambda-x_n)$ in the domain $B_{R,\lambda}:=\{x\in B_R:x_n>\lambda\}$, for every $\lambda\in (0,R)$, since $\psi_\lambda$ satisfies 
$\Delta\psi_\lambda(x)=c(x)\psi_\lambda(x)$ in $B_{R,\lambda}$ with $c(x)=x_1+2(y_R^2(x)+y_R^2(x_1,\ldots, x_{n-1},2\lambda-x_n)+y_R(x)y_R(x_1,\ldots,x_{n-1},2\lambda-x_n))$. We refer to \cite[section 9.5.2.]{evans} for more details. The bound $y_R(x)\leq y(x)\leq h(x_1)$ implies that for every $L>0$ fixed, the functions $y_R$, with $R>L+1$, are uniformly bounded in $B_L$, up to the second derivatives. Therefore, by applying the theorem of Ascoli to $y_R$, via a diagonal argument, we can see that (up to subsequence) $y_R$ converges in $C^2_{\mathrm{loc}}(\R^n;\R)$ to a solution  $y_\infty\in C^\infty(\R^n;\R)$ of \eqref{painhom}. Furthermore, $y_\infty$ is by construction minimal, and satisfies $0\leq y_\infty\leq y$ in $\R^n$, as well as 
$\frac{\partial y_\infty}{\partial x_n}(x)\leq 0$ (resp. $\frac{\partial y_\infty}{\partial x_n}(x)\geq 0$) provided that $x_n\geq 0$ (resp. $x_n\leq 0$). In view of this monotonicity property, a second application of the theorem of Ascoli to the sequence $\tilde y_l(x):=y_\infty(x_1,\ldots,x_{n-1},x_n+l)$, shows that $\lim_{l\to\pm\infty}\tilde y_l(x)=Y^\pm(x_1,\ldots,x_{n-1})$, where $Y^\pm:\R^{n-1}\to\R$ is a nonnegative minimal solution of \eqref{scalarpde}. It is clear that $Y^\pm(x)\leq h(x_1)$, thus our induction hypothesis implies that $Y^\pm(x_1,\ldots,x_{n-1})=h(x_1)$. Finally, since $\lim_{l\to\pm\infty} y_\infty(x_1,\ldots,x_{n-1},x_n+l)=h(x_1)$, $\forall x\in\R^n$, and the function $x_n\mapsto y_\infty(x_1,\ldots,x_n)$ is decreasing on $(0,\infty)$ (resp. increasing on $(-\infty,0)$) for every $(x_1,\ldots,x_{n-1})$ fixed, we deduce that $y_\infty(x_1,\ldots,x_n)=h(x_1)\leq y(x)$, and $y(x_1,\ldots,x_n)=h(x_1)$.
\end{proof}

\section{Proof of Theorem \ref{corpain2}}\label{sec:sec4}
In the next lemma we show the existence of an $O(n-1)$-equivariant solution $y$ of \eqref{painhom}. We first construct in every ball $B_R\subset \R^n$ of radius $R$, an $O(n-1)$-equivariant minimizer $y_R$ of $E_{\mathrm{P_{II}}}$. Then, by passing to the limit as $R\to\infty$, we obtain the solution $y$. By construction $y$ vanishes only on the $x_1$ coordinate axis, and $|y(x)|$ is bounded by $h(x_1)$.
\begin{lemma}\label{lem1}
There exists a solution $y\in C^\infty(\R^n;\R^{n-1})$ (with $ n \geq 3$) to \eqref{painhom} such that
\begin{itemize}
\item[(i)] $y$ is $O(n-1)$-equivariant with respect to $z:=(x_2,\ldots,x_n)$, i.e.
\begin{equation}\label{equivv}
y(x_1,gz)=g y(x_1,z),\  \forall x_1\in \R, \ \forall z \in \R^{n-1}, \ \forall g\in O(n-1).
\end{equation}
Consequently, $y$ can be written as $y(x)= y_{\mathrm{rad}}(x_1,|z|)\frac{z}{|z|}$, where $y_{\mathrm{rad}}(x_1,\sigma)$ is a function having and odd with respect to $\sigma$ extension in $C^\infty(\R^2;\R)$.
\item[(ii)] $y$ is minimal with respect to $O(n-1)$-equivariant perturbations, i.e.
\begin{equation}\label{minnn1}
E_{\mathrm{P_{II}}}(y, \mathrm{supp}\, \phi)\leq E_{\mathrm{P_{II}}}(y+\phi, \mathrm{supp}\, \phi) \text{ for all $\phi\in C^\infty_0(\R^n;\R^{n-1})$ satisfying \eqref{equivv}.}
\end{equation}
\item[(iii)] $y(x_1,z)\cdot z>0$, $\forall x_1\in\R$, $\forall z\neq 0$, and $|y(x)|\leq h(x_1)$, $\forall x \in \R^{n}$.
\item[(iv)] \eqref{minnn1} also holds for maps $\psi$ that can be written as $\psi(x_1,z)=\chi(x_1,z)\frac{z}{|z|}$, with $\chi\in C^\infty_0(\R^n;\R)$ and $\supp \chi\subset\R\times (\R^{n-1}\setminus\{0\})$.

\end{itemize}
\end{lemma}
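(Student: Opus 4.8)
The plan is to construct $y$ as a $C^2_{\mathrm{loc}}$ limit of $O(n-1)$-equivariant minimizers on balls, and to read off (i)--(iv) from this construction. First, for each $R>0$ I would minimize $E_{\mathrm{P_{II}}}(\,\cdot\,,B_R)$ over the closed linear subspace of $H^1_0(B_R;\R^{n-1})$ consisting of equivariant maps; coercivity holds because the Dirichlet and quartic terms dominate $\frac12\int_{B_R}x_1|u|^2$ (with $x_1$ bounded on $B_R$), the subspace is weakly closed, and the functional is weakly lower semicontinuous, so the direct method yields a minimizer $y_R$, which solves \eqref{painhom} by Palais' principle of symmetric criticality and is smooth by elliptic regularity. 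Equivariance forces $y_R=\rho_R(x_1,\sigma)e_z$ (for $z\neq0$ the stabiliser $O(n-2)$ fixes only $\R e_z$), and since $E_{\mathrm{P_{II}}}(we_z,\,\cdot\,)=\int[\frac12|\nabla w|^2+\frac{n-2}{2\sigma^2}w^2+\frac12 x_1w^2+\frac12 w^4]=:\mathcal F(w)$ sees $w$ only through $|\nabla w|$, $w^2$ and $w^4$, I may replace $\rho_R$ by $|\rho_R|$ and assume $\rho_R\ge0$, which yields the radial form in (i).

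The delicate point, and the one I expect to be the main obstacle, is the bound $\rho_R\le h(x_1)$, because the naive comparison principle is unavailable: the zeroth-order coefficient $x_1+2h^2$ of the relevant linear operator is not sign-definite. I would circumvent this with the substitution $\rho_R=h(x_1)\,v$, legitimate since $h>0$. Off the axis $\rho_R$ solves $\Delta\rho_R=(x_1+\tfrac{n-2}{\sigma^2}+2\rho_R^2)\rho_R$, and using $h''=x_1h+2h^3$ this becomes
\[
\Delta v+2\tfrac{h'}{h}\,v_{x_1}=\tfrac{n-2}{\sigma^2}\,v+2h^2\,v(v^2-1).
\]
Since $\rho_R=0$ on $\partial B_R$ and on the axis, a value of $v$ exceeding $1$ would be attained at an interior point with $\sigma>0$; there $\nabla v=0$ and $\Delta v\le0$, while the right-hand side is strictly positive, a contradiction. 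Hence $0\le\rho_R\le h(x_1)$ on $B_R$.

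With this uniform bound, interior elliptic estimates give $C^2_{\mathrm{loc}}$ bounds on $y_R$, so a diagonal subsequence converges in $C^2_{\mathrm{loc}}$ to a smooth equivariant solution $y=\rho\,e_z$ with $0\le\rho\le h(x_1)$; the smooth odd-in-$\sigma$ extension of $\rho$ follows from the standard regularity of equivariant vortices, giving (i) and the bound in (iii). Equivariant minimality (ii) passes to the limit, since any admissible perturbation is supported in some $B_R$. To see $y\not\equiv0$ I would test the second variation (as in Lemma \ref{lemzero}) with an off-axis equivariant bump translated so that $x_1\to-\infty$, where $\int x_1|\phi|^2\to-\infty$ defeats nonnegativity of the second variation of the zero map. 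Finally, $\rho\ge0$ solves a linear elliptic equation $\Delta\rho=c(x)\rho$ off the axis, so by the strong maximum principle an interior zero would force $\rho\equiv0$ on the connected set $\{z\neq0\}$ (here $n\ge3$ enters), hence $y\equiv0$, a contradiction; thus $\rho>0$, i.e. $y\cdot z>0$ for $z\neq0$, completing (iii).

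For the sharper minimality (iv), where $\psi=\chi e_z$ with $\supp\chi\subset\R\times(\R^{n-1}\setminus\{0\})$, I would use a ground-state substitution rather than symmetrisation (which fails here because $x_1$ changes sign). Since $\rho>0$ on $\supp\chi$, write $\rho+\chi=\rho g$ with $g\equiv1$ off a compact set; then, using the Euler--Lagrange equation for $\rho$ and a single integration by parts,
\[
\mathcal F(\rho g)-\mathcal F(\rho)=\tfrac12\int\rho^2|\nabla g|^2+\tfrac12\int\rho^4\,(g^2-1)^2\ \ge\ 0,
\]
which is precisely \eqref{minnn1} for such $\psi$; the requirement that $\supp\chi$ avoid the axis is exactly what makes $g=(\rho+\chi)/\rho$ admissible. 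The crux of the whole argument remains the second step: every other step is a variant of standard Ginzburg--Landau techniques, whereas the bound $|y|\le h(x_1)$ genuinely requires exploiting that $h$ solves the one-dimensional equation through the change of unknown $\rho=hv$.
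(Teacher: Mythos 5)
Your construction is correct, and its skeleton --- direct minimization over the equivariant subspace of $H^1_0(B_R;\R^{n-1})$, Palais' symmetric criticality, the sign choice $\rho_R\ge 0$, uniform bounds plus Ascoli and a diagonal argument, the translated-bump second-variation test for $y\not\equiv 0$, and the strong maximum principle off the axis (using that $\{z\neq0\}$ is connected for $n\ge 3$) for strict positivity --- coincides with the paper's proof. Where you genuinely diverge is in the two delicate steps, and both of your alternatives check out. First, for the bound $|y_R|\le h(x_1)$, the paper argues variationally: it truncates $y_{\mathrm{rad},R}$ at $h$, invokes the minimality of $(x_1,\sigma)\mapsto h(x_1)$ as a solution of \eqref{scalarpde} together with the strict decrease of the centrifugal term $\frac{n-2}{2}\frac{w^2}{\sigma^2}$ under truncation (cf.\ \eqref{pp11}, \eqref{pp12}, \eqref{pp33}), and contradicts the minimality of $y_R$; you instead substitute $\rho_R=hv$ and exclude an interior maximum with $v>1$ by a pointwise maximum principle. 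I verified your equation for $v$, and the argument is sound: $v$ is continuous on $\overline{B_R}$, vanishes on $\partial B_R$ and on the axis, and at an interior maximum one has $v_{x_1}=0$ and $\Delta v\le 0$ while the right-hand side $\frac{n-2}{\sigma^2}v+2h^2v(v^2-1)$ is strictly positive. This is more elementary and bypasses the cylindrical-coordinate energy bookkeeping, at the price of needing $y_R$ to be a classical solution of \eqref{painhom} --- which Palais supplies. Second, for item (iv), the paper reduces to the equivariant minimality (ii) by symmetrization: it applies \eqref{minnn1} to the equivariant competitors with profiles $\chi_\nu(x_1,\sigma)=\chi(x_1,\sigma\nu)$, $\nu\in\SF^{n-2}$, and integrates over $\nu$ using the pointwise inequality \eqref{cyl5}; your ground-state (Picone-type) identity $\mathcal F(\rho g)-\mathcal F(\rho)=\tfrac12\int\rho^2|\nabla g|^2+\tfrac12\int\rho^4(g^2-1)^2$ is a genuinely different route and in fact stronger, since it derives (iv) from the positivity of $\rho$ on $\supp\chi$ alone, with an explicit nonnegative energy excess, and never uses (ii). The identity is correct: it follows by expanding and testing the equation of $\rho$ against $\tfrac12\rho(g^2-1)$, which is legitimate precisely because $\supp\chi$ is a compact subset of $\{z\neq0\}$, where $\rho>0$. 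In summary, the paper's arguments are purely variational and apply to minimizers as such, whereas yours are shorter and more quantitative once the Euler--Lagrange equation is in hand; the only points you leave to standard theory --- the smooth odd extension of $y_{\mathrm{rad}}$ and the passage of \eqref{minnn1} to the limit $R\to\infty$ --- are asserted with the same brevity in the paper.
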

\begin{proof}
Let $B_R\subset \R^n$ (resp. $D_R\subset \R^2$) be the open ball (resp. the open disc) of radius $R$, centered at the origin. 
The existence of a minimizer $y_R$ of $E_{\mathrm{P_{II}}}$ in the class
$$\mathcal A=\{u\in H_0^{1}(B_R;\R^{n-1}): u(x_1,gz)=g u(x_1,z),\text{ for a.e. }  x=(x_1,z)\in B_R,\ \forall g\in O(n-1)\}$$
follows from the direct method. We first show that $\inf\{\,E_{\mathrm{P_{II}}}(u):\ u \in \mathcal A \}>-\infty$. 
To see this, we notice that $ \frac{x_1}{2}|u|^2+\frac{1}{4}|u|^4< 0 \Longleftrightarrow \frac{1}{2}|u|^2<-x_1$, thus $\frac{x_1}{2}|u|^2+\frac{1}{4}|u|^4\geq -x_1^2$.
Now, let $m:=\inf_{\mathcal A} E_{\mathrm{P_{II}}}>-\infty$, and let $u_n$ be a sequence such that $E_{\mathrm{P_{II}}}(u_n) \to m$. According to what precedes, we obtain the bound $\int_{\R^2}\big[\frac{1}{2}| \nabla u_n|^2+\frac{1}{4}|u_n|^4\big]\leq E_{\mathrm{P_{II}}}(u_n)+\int_{B_R} x_1^2 $, hence $\left\|u_n \right\|_{H^{1}(B_R,\R^m)}$ is bounded. 
As a consequence, for a subsequence still called $u_n$, we have $u_n\rightharpoonup y_R$ weakly in $H^1$, as well as $u_n\to y_R$ strongly in $L^4$ and $L^2$. In particular,
$\int_{B_R} |\nabla y_R|^2\leq \liminf_{n \to \infty} \int_{\R^2}  |\nabla u_n|^2$ holds by lower semicontinuity, while $\int_{B_R} |y_R|^4=\lim_{n \to \infty} \int_{B_R}  |u_n|^4$, and $\int_{B_R}x_1|v|^2 = \lim_{n \to \infty} \int_{B_R}x_1 |u_n|^2$ hold due to the strong convergence. Gathering the previous results it is clear that $m= E_{\mathrm{P_{II}}}(y_R,B_R)$. Finally, since $\mathcal A$ is a closed subspace of $ H_0^{1}(B_R;\R^{n-1})$, we deduce that $y_R$ is a minimizer of $ E_{\mathrm{P_{II}}}$ in $\mathcal A$, and a critical point of $ E_{\mathrm{P_{II}}}$ for $O(n-1)$-equivariant perturbations.

Next, since the potential $H$ is invariant with respect to the group $O(n-1)$ (i.e. $H(x_1,gy)=H(x_1,y)$, $\forall x_1\in \R$, $\forall y\in \R^{n-1}$, $\forall g \in O(n-1)$), we obtain in view of \cite{palais}, that $y_R$ is a critical point of $E_{\mathrm{P_{II}}}$ for general $H_0^{1}(B_R;\R^{n-1})$ perturbations. Thus, $y_R\in C^\infty(\overline{B_R};\R^m)$ is a classical solution of \eqref{painhom} in $B_R$, and moreover $y_R$ can be written as $y_R(x)= y_{\mathrm{rad},R}(x_1,|z|)\frac{z}{|z|}$, where $y_{\mathrm{rad},R}(x_1,\sigma)$ is a function having and odd with respect to $\sigma$ extension in $C^\infty(D_R;\R)$.
Computing the energy in cylindrical coordinates, we obtain:
\begin{equation}\label{cyl}
E_{\mathrm{P_{II}}}(y_R, B_R)=A(\SF^{n-2})E_{\mathrm{P_{II},rad}}(y_{\mathrm{rad},R}, D_R\cap\{\sigma>0\})
\end{equation}
with $A(\SF^{n-2})$ the area of the $(n-2)$-dimensional sphere $\SF^{n-2}$, and
\begin{equation}\label{cyl2}
E_{\mathrm{P_{II},rad}}(y_{\mathrm{rad},R},D_R\cap\{\sigma>0\})=\int_{D_R\cap\{\sigma>0\}  } \Big[\frac{1}{2}|\nabla y_{\mathrm{rad},R}|^2 +\frac{n-2}{2}\frac{y_{\mathrm{rad},R}^2}{\sigma^2}+\frac{x_1y_{\mathrm{rad},R}^2}{2}+\frac{y_{\mathrm{rad},R}^4}{2}\Big]\dd x_1\dd \sigma.
\end{equation}
From this expression of $E_{\mathrm{P_{II}}}$, it follows that $\tilde y_R(x):=|y_R(x)|\frac{z}{|z|}$ is another minimizer of $E_{\mathrm{P_{II}}}$ in $\mathcal A$, and also a solution of \eqref{painhom} in $B_R$. Thus, one can choose $y_R$ such that
\begin{equation}\label{psca}
y_R(x_1,z)\cdot z\geq 0,  \ \forall x\in B_R.
\end{equation}
Finally, we are going to show that $|y_R(x)|\leq h(x_1)$ holds on $B_R$. Indeed, assuming by contradiction that $|y_{\mathrm{rad},R}(\tilde x_1,\tilde \sigma)|>h(\tilde x_1)$ holds for some $(\tilde x_1,\tilde \sigma)\in D_R\cap\{\sigma>0\}$, we consider the competitor
$\tilde y_{\mathrm{rad},R}(x_1, \sigma):=\min( y_{\mathrm{rad},R}( x_1,\sigma),h(x_1))$, for $(x_1,\sigma)\in D_R\cap\{\sigma>0\} $, and we notice that the function $$\xi(x_1,\sigma):=\max( y_{\mathrm{rad},R}(x_1, \sigma),h(x_1))-h(x_1)$$ is such that $\supp\xi\subset D_R\cap\{\sigma>0\} $. Thus, since $\R^2\ni (x_1,\sigma)\mapsto h(x_1,\sigma)$ is a minimal solution of \eqref{scalarpde}, we have
\begin{equation}\label{pp11}
                      \int_{\supp\xi}  \Big[\frac{1}{2}|\nabla \tilde y_{\mathrm{rad},R}|^2 +\frac{x_1\tilde y_{\mathrm{rad},R}^2}{2}+\frac{\tilde y_{\mathrm{rad},R}^4}{2}\Big]\dd x_1\dd \sigma  \leq \int_{\supp\xi  } \Big[\frac{1}{2}|\nabla y_{\mathrm{rad},R}|^2 +\frac{x_1y_{\mathrm{rad},R}^2}{2}+\frac{y_{\mathrm{rad},R}^4}{2}\Big]\dd x_1\dd \sigma.
\end{equation}
On the other hand, it is obvious that
\begin{equation}\label{pp12}
                      \int_{\supp\xi} \frac{n-2}{2}\frac{\tilde y_{\mathrm{rad},R}^2}{\sigma^2}\dd x_1\dd \sigma  < \int_{\supp\xi  }\frac{n-2}{2}\frac{y_{\mathrm{rad},R}^2}{\sigma^2}\dd x_1\dd \sigma,
\end{equation}
and
\begin{equation}\label{pp33}
E_{\mathrm{P_{II},rad}}(\tilde y_{\mathrm{rad},R},(D_R\cap\{\sigma>0\})\setminus\supp\xi\})= E_{\mathrm{P_{II},rad}}(y_{\mathrm{rad},R},(D_R\cap\{\sigma>0\})\setminus\supp\xi\}).
\end{equation}
Combining, \eqref{pp11}, \eqref{pp12}, and \eqref{pp33}, we deduce that the map $\tilde y_R(x):= \tilde y_{\mathrm{rad},R}(x_1,|z|)\frac{z}{|z|}$ belonging to $\mathcal A$, satisfies $E_{\mathrm{P_{II}}}(\tilde y_R, B_R)<E_{\mathrm{P_{II}}}(y_R, B_R)$, which is a contradiction. Thus, we have established that $|y_R(x)|\leq h(x_1)$ holds in $B_R$, for every $R>0$. 

The previous bounds imply that for every $L>0$ fixed, the maps $y_R$, with $R>L+1$, are uniformly bounded in $B_L$, up to the second derivatives. Therefore, by applying the theorem of Ascoli to $y_R$, via a diagonal argument, we can see that (up to subsequence) $y_R$ converges in $C^2_{\mathrm{loc}}(\R^n;\R^m)$ to a solution  $y\in C^\infty(\R^n;\R^{n-1})$ of \eqref{painhom} satisfying \eqref{equivv} and \eqref{minnn1}. Again, we point out that $y$ can be written as $y(x)= y_{\mathrm{rad}}(x_1,|z|)\frac{z}{|z|}$, where $y_{\mathrm{rad}}(x_1,\sigma)$ is a function having and odd with respect to $\sigma$ extension in $C^\infty(\R^2;\R)$.

Our next claim is that $y$ cannot be identically zero. Indeed, the minimality of $y$ implies that the second variation of the energy $E_{\mathrm{P_{II}}}$ is nonnegative in the class of $O(n-1)$-equivariant maps:
\begin{equation}\label{secondva}
\int_{\R^n} (|\nabla \phi(x)|^2+(6 |y(x)|^2+x_1)|\phi(x)|^2)\dd x \geq 0,\quad  \forall \phi\in C^1_0(\R^n;\R^m) \text{ satisfying \eqref{equivv}. }
\end{equation}
Clearly \eqref{secondva} does not hold when $y\equiv 0$, if we take $\phi(x_1,z)=\phi_0(x_1+l,z)$, with $l\to\infty$, and $\phi_0$ a smooth, $O(n-1)$-equivariant map, such that $\phi_0 \not\equiv 0$.
Now, let us check that $y(x_1,z)\cdot z>0$ holds for every $ x_1\in\R$ and $ z\neq 0$, or equivalently $x_2>0 \Rightarrow y_2(x)>0$. By construction, we have $y(x_1,z)\cdot z\geq 0$, $\forall x\in \R^n$, and in particular, $y_2\geq 0$ holds in the half-space $\Omega=\{x_2>0\}$. Since $y_2$ satisfies $\Delta y_2=(x_1+2|y|^2)y_2$, the existence of a point $p\in \Omega$ such that $y_2(p)=0$ would imply by the maximum principle that $y_2\equiv 0$, and $y\equiv 0$. This is a contradiction. Thus, we have checked that $y(x_1,z)\cdot z>0$ holds for every $ x_1\in\R$ and $ z\neq 0$, and since it is obvious that $|y(x)|\leq h(x_1)$, $\forall x\in\R^n$, the proof of (iii) is complete.

To prove (iv), we write $y(x)= y_{\mathrm{rad}}(x_1,|z|)\frac{z}{|z|}$, $\phi(x)= \phi_{\mathrm{rad}}(x_1,|z|)\frac{z}{|z|}$, and utilize \eqref{cyl}. One can see that \eqref{minnn1} is equivalent to
\begin{equation}\label{cyl3}
E_{\mathrm{P_{II},rad}}(y_{\mathrm{rad}}, \{|x_1|\leq \alpha, \sigma\in[0, \beta]\})\leq 
E_{\mathrm{P_{II},rad}}(y_{\mathrm{rad}}+\phi_{\mathrm{rad}}, \{|x_1|\leq \alpha, \sigma\in[0, \beta]\}), 
\end{equation}
provided that $\supp \phi_{\mathrm{rad}}\subset [-\alpha,\alpha]\times[-\beta,\beta]$. Now, given $\psi(x_1,z)=\chi(x_1,z)\frac{z}{|z|}$, with $\chi\in C^\infty_0(\R^n;\R)$ and $\supp \chi\subset[-\alpha,\alpha]\times \{z: 0<|z|\leq\beta\}$, we define for every $\nu\in\SF^{n-2}$, the $O(n-1)$-equivariant map $\psi_\nu(x_1,z):=\chi(x_1,|z|\nu)\frac{z}{|z|}$. Setting $\chi_\nu(x_1,\sigma):=\chi(x_1,\sigma\nu)$, we have in view of \eqref{cyl3}:
\begin{equation}\label{cyl4}
E_{\mathrm{P_{II},rad}}(y_{\mathrm{rad}},\{|x_1|\leq \alpha, \sigma\in[0, \beta]\})\leq 
E_{\mathrm{P_{II},rad}}(y_{\mathrm{rad}}+\chi_\nu, \{|x_1|\leq \alpha, \sigma\in[0, \beta]\}), \forall \nu\in\SF^{n-2}.
\end{equation}
We can also check that 
\begin{equation}\label{cyl5}
\frac{1}{2}|\nabla \chi_\nu(x_1,\sigma)|^2 +\frac{n-2}{2}\frac{|\chi_{\nu}(x_1,\sigma)|^2}{\sigma^2}\leq \frac{1}{2}|\nabla \psi(x_1,\sigma\nu)|^2 
\end{equation}
holds for every $|x_1|\leq \alpha$, $\sigma\in[0, \beta]$, $\nu\in\SF^{n-2}$. As a consequence, an integration of \eqref{cyl4} over $\nu\in\SF^{n-2}$ gives:
\begin{equation}\label{cyl6}
E_{\mathrm{P_{II}}}(y, \supp \psi)\leq E_{\mathrm{P_{II}}}(y+\psi, \supp \psi).
\end{equation}

\end{proof}
The uniqueness of the solution $y$ provided by Lemma \ref{lem1} is an open question. In what follows we establish for \emph{any} satisfying the assumptions of Lemma \ref{lem1}, the statements of Theorem \ref{corpain2} hold. We will proceed in few steps.
By particularizing Lemma \ref{ass}, we obtain the limit in \eqref{scale2} in the case where $z\neq 0$:

\begin{lemma}\label{lem44}
Let $y$ be the solution provided by Lemma \ref{lem1}, and consider the rescaled map $\tilde y(t_1,\ldots,t_n)$ as in \eqref{ytilde}, with $z\neq 0$ fixed. Then, $\lim_{l\to\-\infty}\tilde y(t_1+l,t_2,\ldots,t_n)=e_z:=\frac{z}{|z|}$ for the $C^1_{\mathrm{loc}}(\R^n;\R^{n-1})$ convergence.
\end{lemma}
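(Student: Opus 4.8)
The plan is to obtain Lemma \ref{lem44} as a direct application of Lemma \ref{ass}, followed by an identification of the limit using the equivariant structure of $y$ together with the partial minimality recorded in Lemma \ref{lem1}(iv). First I would check that the solution $y$ of Lemma \ref{lem1} satisfies the hypothesis \eqref{boundass} of Lemma \ref{ass}: this is immediate from the bound $|y(x)|\le h(x_1)$ of Lemma \ref{lem1}(iii) together with the asymptotics $h(x_1)\sim\sqrt{|x_1|/2}$ as $x_1\to-\infty$ recalled in \eqref{asy0}, so $|y(x)|/\sqrt{|x_1|}$ is bounded on $\{x_1\le -1\}$. Since the rescaling \eqref{ytilde} with $z$ fixed coincides with the rescaling \eqref{eqder} (note $(-x_1)^{1/2}=(-\tfrac32 t_1)^{1/3}$), Lemma \ref{ass} provides a subsequence $l_k\to-\infty$ along which $\tilde y(t_1+l_k,t_2,\ldots,t_n)$ converges in $C^1_{\mathrm{loc}}(\R^n;\R^{n-1})$ to a solution $u$ of \eqref{gl}. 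Moreover $|u|\le 1$, because $|\tilde y|=\frac{\sqrt2}{(-x_1)^{1/2}}|y(x_1,r+z)|\le\frac{\sqrt2}{(-x_1)^{1/2}}h(x_1)\to 1$ by \eqref{asy0}.

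Next I would identify the direction of $u$. Writing $y(x_1,r+z)=y_{\mathrm{rad}}(x_1,|r+z|)\tfrac{r+z}{|r+z|}$ and recalling $r=(-\tfrac32 t_1)^{-1/3}\tau$, for $\tau$ in a fixed compact set one has $r\to 0$ as $t_1\to-\infty$, so the direction field $\tfrac{r+z}{|r+z|}$ converges in $C^1_{\mathrm{loc}}$ to the constant $e_z$ (its $\tau$-derivatives carry a factor $(-x_1)^{-1/2}\to 0$). Combined with the $C^1_{\mathrm{loc}}$ convergence of $\tilde y$, this forces $u=\rho\,e_z$ for a scalar function $\rho$, with $\rho\ge 0$ since $y_{\mathrm{rad}}>0$ on $\{\sigma>0\}$ by Lemma \ref{lem1}(iii). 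Hence $\rho$ is a nonnegative solution of the scalar equation $\Delta\rho=\rho^3-\rho$ satisfying $0\le\rho\le 1$.

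It remains to show $\rho\equiv 1$, and for this minimality is needed. I would transfer the partial minimality of Lemma \ref{lem1}(iv) to $\rho$ by repeating the energy-comparison computation in the proof of Lemma \ref{ass}, but restricted to competitors of the form $(\rho+\chi)e_z$: since $z\neq 0$, for $k$ large the corresponding perturbations are supported away from the $x_1$-axis and are therefore admissible in \eqref{cyl6}; the radial direction $\tfrac{r+z}{|r+z|}$ on the shrinking support differs from $e_z$ by $o(1)$ in $C^1$, so in the limit \eqref{cyl6} becomes the minimality of $\rho$ for $\Delta\rho=\rho^3-\rho$ (the term $\frac{n-2}{2}\frac{(\cdot)^2}{\sigma^2}$ in \eqref{cyl2} is lower order after rescaling and drops out, exactly as the correction terms do in Lemma \ref{ass}). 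Thus $\rho$ is a nonnegative minimal solution of the Allen--Cahn equation bounded by $1$, whence $\rho\equiv 1$ by the classification recalled in the footnote to Lemma \ref{lema} (the constant $\rho\equiv 0$ being excluded as non-minimal). Since every subsequential limit equals $e_z$, a standard contradiction argument then upgrades this to convergence of the full family $\tilde y(t_1+l,t_2,\ldots,t_n)\to e_z$ as $l\to-\infty$ in $C^1_{\mathrm{loc}}$.

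I expect the last step to be the main obstacle. Lemma \ref{ass} on its own only identifies the limit as \emph{some} Ginzburg--Landau solution, and our $y$ is merely minimal for equivariant and radial perturbations, so pinning down $u=e_z$ requires carefully matching the radial-direction minimality of Lemma \ref{lem1}(iv) against the fixed-direction scalar perturbations relevant in the rescaled limit, while controlling the $\sigma^{-2}$ term; it is precisely the direction convergence $\tfrac{r+z}{|r+z|}\to e_z$ that makes these two notions of minimality coincide in the limit.
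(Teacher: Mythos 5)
Your proposal is correct and follows essentially the same route as the paper: rescale and extract a subsequential Ginzburg--Landau limit as in Lemma \ref{ass}, use the equivariant structure (the direction field $\frac{r+z}{|r+z|}\to e_z$ since $z\neq 0$) to write the limit as $v\,e_z$ with $v\geq 0$, transfer the radial minimality of Lemma \ref{lem1}(iv) through the rescaled energy comparison (the direction-field and $\sigma^{-2}$ corrections being lower order), conclude $v\equiv 1$ from the classification of nonnegative minimal Allen--Cahn solutions in \cite[Corollary 5.2]{book}, and upgrade to full convergence by uniqueness of the limit. This is exactly the paper's argument, including the key observation you flag at the end — that admissibility of the radial perturbations away from the $x_1$-axis is what allows the equivariant/radial minimality to become genuine scalar minimality in the limit.
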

\begin{proof}
We proceed as in the proof of Lemma \ref{ass}. Let $(e_1,\ldots,e_n)$ be the canonical basis of $\R^n$, and let $\tilde \chi(t_1,\ldots,t_n)\in C^\infty_0(\R^n;\R)$ be a test function such that
$\tilde S:=\supp \tilde \phi \subset\{(t_1,\ldots,t_n): c-d\leq t_1\leq c, |(t_2, \ldots ,t_n)|\leq d\}$, for some constants $c \in\R$ and $d>0$.
Given $l\in\R$, we consider the maps $\tilde \chi^{-l}(t_1,\ldots,t_n):=\tilde \chi(t_1-l,t_2,\ldots,t_n)$, and
$\tilde y^l(t_1,\ldots,t_n):=\tilde y(t_1+l,t_2,\ldots,t_n)$.
Note that $\tilde S^{l}:=\supp \tilde \chi^{-l}=\tilde S+l e_1$, and
$\supp\tilde  \chi^{-l} \subset\{(t_1,\ldots,t_n):  t_1<-1\} $ when $l<-1-c$. Furthermore, for $l<l_0:=\min\big(-1-c, -\frac{2}{3}\big(\frac{d}{|z|}\big)^3-c\big)$, we can define $\phi^{-l}\in C^\infty_0(\R^n;\R^{n-1})$ as in \eqref{eqder}, by
$$\phi^{-l}(x_1, r+z)=\frac{(-x_1)^{\frac{1}{2}}}{\sqrt{2}}\tilde \chi^{-l}(t_1,\tau)\frac{r+z}{|r+z|},$$
since we have $S^{l}:=\{(x_1(t_1),r(t_1,\ldots,t_n)+z): \, (t_1,\ldots,t_n)\in \tilde S^{l}\}\subset (-\infty,0)\times (\R^{n-1}\setminus\{0\})$ for $l<l_0$.
As a consequence of Lemma \ref{lem1} (iv), it follows that
\begin{equation}\label{weakaa}
E_{\mathrm{P_{II}}}( y,  S^l)\leq E_{\mathrm{P_{II}}}( y+ \phi^{-l},S^l), \ \forall l<l_0.
\end{equation}
Now, we compute
\begin{subequations}\label{eqderder1}
\begin{equation}\label{eqder1b1}
\sqrt{2}\phi^{-l}_{x_i}(x_1, r+z)=(-x_1)\tilde \chi^{-l}_{t_i}(t_1,\tau)\frac{r+z}{|r+z|}+(-x_1)^{\frac{1}{2}}\tilde \chi^{-l}(t_1,\tau)\Big(\frac{e_i}{|r+z|}-\frac{r_i(r+z)}{|r+z|^3}\Big), \ \forall i=2,\ldots,n,
\end{equation}
\begin{equation}\label{eqder3b1}
 \sqrt{2}\phi^{-l}_{x_1}(x_1, r+z)=-\frac{(-x_1)^{-\frac{1}{2}}}{2}\tilde \chi^{-l}(t_1,\tau)\frac{r+z}{|r+z|}+(-x_1) \tilde \chi^{-l}_{t_1}(t_1,\tau)\frac{r+z}{|r+z|}-\sum_{i=2}^n\frac{r_i}{2}\tilde \chi^{-l}_{t_i}(t_1,\tau)\frac{r+z}{|r+z|},
\end{equation}
\end{subequations}
and we set
\begin{equation}\label{ksi}
\xi(t):=\frac{r+z}{|r+z|}=\frac{(-\frac{3}{2}t_1)^{-\frac{1}{3}}\tau+z}{|(-\frac{3}{2}t_1)^{-\frac{1}{3}}\tau+z|}=e_z+O((-t_1)^{-\frac{1}{3}}), \text{ provided that $|\tau|$ remains bounded}.
\end{equation}
After changing variables (cf. also \eqref{eqderder}), \eqref{weakaa} becomes:
\begin{equation}\label{weakaab}
\int_{\tilde S^l}\frac{1}{2}\big(-\frac{3}{2}t_1\big)^{\frac{4-n}{3}}[B( \tilde y)+C(\tilde y)]\leq  \int_{\tilde S^l}\frac{1}{2}\big(-\frac{3}{2}t_1\big)^{\frac{4-n}{3}}[G(\tilde y,\tilde \chi^{-l})+R(\tilde y,\tilde \chi^{-l})], \ \forall l<l_0,
\end{equation}
with $B$ (resp. $C$) as in \eqref{bdef} (resp. \eqref{cdef}), and
$$G(\tilde y,\tilde \chi^{-l})=\Big[\frac{1}{2}\sum_{i=1}^n| \tilde y_{t_i}+ \tilde \chi^{-l}_{t_i}\xi|^2-\frac{|\tilde y +\tilde \chi^{-l}\xi |^2}{2}+\frac{|\tilde y +\tilde \chi^{-l}\xi |^4}{4}\Big],$$ 
\begin{align*}
R(\tilde y,\tilde \chi^{-l})&=\big(-\frac{3}{2}t_1\big)^{-\frac{1}{3}}\tilde \chi^{-l}\sum_{i=2}^n (\tilde y_{t_i}+\tilde\chi^{-l}_{t_i}\xi)\cdot\Big(\frac{e_i}{|(-\frac{3}{2}t_1)^{-\frac{1}{3}}\tau+z|}-\frac{t_i(-\frac{3}{2}t_1)^{-\frac{1}{3}}((-\frac{3}{2}t_1)^{-\frac{1}{3}}\tau+z)}{|(-\frac{3}{2}t_1)^{-\frac{1}{3}}\tau+z|^3}\Big)\\
&+\big(-\frac{3}{2}t_1\big)^{-\frac{2}{3}}\frac{|\tilde \chi^{-l}|^2}{2}\sum_{i=2}^n\Big|\frac{e_i}{|(-\frac{3}{2}t_1)^{-\frac{1}{3}}\tau+z|}-\frac{t_i(-\frac{3}{2}t_1)^{-\frac{1}{3}}((-\frac{3}{2}t_1)^{-\frac{1}{3}}\tau+z)}{|(-\frac{3}{2}t_1)^{-\frac{1}{3}}\tau+z|^3}\Big|^2\\
&-\frac{1}{2}\big(-\frac{3}{2}t_1\big)^{-1}(\tilde y_{t_1}+\tilde\chi^{-l}_{t_1}\xi)\cdot\big(\tilde y+\tilde\chi^{-l}\xi+\sum_{i=2}^nt_i(\tilde y_{t_i}+\tilde \chi^{-l}_{t_i}\xi)\big)\\
&+\frac{1}{8}\big(-\frac{3}{2}t_1\big)^{-2}\Big|\tilde y+\tilde\chi^{-l}\xi-\sum_{i=2}^nt_i(\tilde y_{t_i}+\tilde \chi^{-l}_{t_i}\xi)\Big|^2
\end{align*}
Since estimates \eqref{eqder8} hold in view of the bound $|y(x)|\leq h(x_1)$ provided by Lemma \ref{lem1} (iii), we have
$R(\tilde y,\tilde \chi^{-l})=O((-t_1)^{-\frac{1}{3}})$. 
On the other hand, \eqref{ksi} and \eqref{eqder8} imply that
\begin{equation}\label{eqq1}
G(\tilde y,\tilde \chi^{-l})=\Big[\frac{1}{2}\sum_{i=1}^n| \tilde y_{t_i}+ \tilde \chi^{-l}_{t_i}e_z|^2-\frac{|\tilde y +\tilde \chi^{-l}e_z |^2}{2}+\frac{|\tilde y +\tilde \chi^{-l}e_z |^4}{4}\Big]+O((-t_1)^{-\frac{1}{3}}),
\end{equation}
or equivalently
\begin{equation}\label{eqq2}
G(\tilde y,\tilde \chi^{-l})=B(\tilde y+\tilde \chi^{-l}e_z)+O((-t_1)^{-\frac{1}{3}}).
\end{equation}
Finally, given a sequence $l_k\to -\infty$, one can show as in the proof of Lemma \ref{ass}, that up to subsequence, $\tilde y_k(t_1,t_2,\ldots,t_n):=\tilde y(t_1+l_k,t_2,\ldots,t_n)$ converges in $C^1_{\mathrm{ loc}}(\R^n;\R^{n-1})$ to a solution $u(t_1,t_2,\ldots,t_n)$, $u:\R^n\to\R^{n-1}$, of $\Delta u=|u|^2-u$. Moreover, in view of Lemma \ref{lem1}, we have $u(t)=v(t)e_z$, with $v\geq 0$, since the vectors $\tilde y_k(t)$ and $f_k(t):=z+\big(-\frac{3}{2}(t_1+l_k)\big)^{-\frac{1}{3}}\tau\in\R^{n-1}$ have the same direction, and $\lim_{k\to\infty}f_k(t)=z$.
To conclude, we reproduce the argument at the end of the proof of Lemma \ref{ass}, and obtain that
\begin{align*}
\int_{\tilde S^{l_k}}\frac{1}{2}\big(-\frac{3}{2}t_1\big)^{\frac{4-n}{3}}[B( \tilde y)+C(\tilde y)]&=
(-c-l_k)^{\frac{4-n}{3}}\int_{\tilde S^{l_k}}B( \tilde y)+O((-c-l_k)^{\frac{1-n}{3}})\\
&=(-c-l_k)^{\frac{4-n}{3}}\int_{\tilde S}B( \tilde y_k)+O((-c-l_k)^{\frac{1-n}{3}}),
\end{align*}
while
\begin{align*}
\int_{\tilde{ S}^{l_k}}\frac{1}{2}\big(-\frac{3}{2}t_1\big)^{\frac{4-n}{3}}[G(\tilde{ y},\tilde{ \chi}^{-l_k})+
R(\tilde{ y},\tilde{ \chi}^{-l_k})]&=(-c-l_k)^{\frac{4-n}{3}}\int_{\tilde{ S}^{l_k}}B( \tilde{ y}+\tilde{ \psi}^{-l_k})+O((-c-l_k)^{\frac{3-n}{3}})\\
&=(-c-l_k)^{\frac{4-n}{3}}\int_{\tilde{ S}}B( \tilde{ y}_k+\tilde{ \chi} e_z)+O((-c-l_k)^{\frac{3-n}{3}}),
\end{align*}
As a consequence, it follows from \eqref{weakaab} that
\begin{equation}\label{weq3}
\int_{\tilde S}B( \tilde y_k)+O((-c-l_k)^{-1})\leq\int_{\tilde S}B( \tilde y_k+\tilde \chi e_z)+O((-c-l_k)^{-\frac{1}{3}}),
\end{equation}
holds for $k$ large enough. Next, passing to the limit, we get
\begin{equation}\label{weq3}
E_{\mathrm{GL}}(v e_z, \tilde S)\leq E_{\mathrm{GL}}(v e_z+\tilde \chi e_z, \tilde S),
\end{equation}
i.e. $v:\R^n\to\R$ is a nonnegative minimal solution of $\Delta v=v^3-v$. Therefore, in view of  \cite[Corollary 5.2]{book}, we deduce that $v \equiv 1$, and since the limit of $\tilde y_k$ is independent of the sequence $l_k\to-\infty$, we have established that $\lim_{l\to\-\infty}\tilde y(t_1+l,t_2,\ldots,t_n)=e_z$. This completes the proof of Lemma \ref{lem44}.
\end{proof}

Next, we examine the asymptotic convergence of $y(x_1,z)$, as $|z|\to\infty$.

\begin{lemma}\label{ass22}
Let $y(x_1,z)=y_{\mathrm{rad}}(x_1,|z|)\frac{z}{|z|}$ be the solution provided by Lemma \ref{lem1}, and let $\{z_k\}\subset\R^{n-1}$ be a sequence such that  $\lim_{k\to\infty}|z_k|=\infty$, and $\lim_{k\to\infty}\frac{z_k}{|z_k|}=\n_0$.
Then,  $y_k(x_1,z):=y(x_1,z+z_k)$ converges as $k\to\infty$, to $h(x_1)\n_0$ in $C^1_{\mathrm{loc}}(\R^n;\R^{n-1})$. In particular, $\lim_{l\to \infty}  y_{\mathrm{rad}}(x_1,\sigma+l)=h(x_1)$ for the $C^1_{\mathrm{loc}}(\R^2;\R)$ convergence.
\end{lemma}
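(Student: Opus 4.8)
The plan is to pass to the limit $k\to\infty$ in the translated maps $y_k(x_1,z):=y(x_1,z+z_k)$, exploiting that equation \eqref{painhom} is invariant under translations in $z$, so that each $y_k$ is again a solution of \eqref{painhom}, and that as $|z_k|\to\infty$ the vortex core recedes to infinity while the angular structure of $y$ flattens out. First I would establish compactness: by Lemma \ref{lem1} (iii) we have $|y_k(x)|=|y(x_1,z+z_k)|\leq h(x_1)$, so on every compact $K\subset\R^n$ the maps $y_k$ are bounded, uniformly in $k$, by the maximum of $h$ over the range of $x_1$ on $K$. Since $\Delta y_k=x_1y_k+2|y_k|^2y_k$ with locally uniformly bounded right-hand side, standard elliptic estimates give uniform local bounds up to the second derivatives. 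Ascoli together with a diagonal argument then yields, up to a subsequence, convergence $y_k\to y_\infty$ in $C^1_{\mathrm{loc}}(\R^n;\R^{n-1})$, where $y_\infty\in C^\infty(\R^n;\R^{n-1})$ solves \eqref{painhom} and satisfies $|y_\infty(x)|\leq h(x_1)$.

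Next I would identify the limit. Since $z$ is fixed while $|z_k|\to\infty$ with $z_k/|z_k|\to\n_0$, the unit field $e_k(x_1,z):=\frac{z+z_k}{|z+z_k|}$ converges to the constant $\n_0$ in $C^1_{\mathrm{loc}}$, with $|\nabla e_k|=O(1/|z_k|)$. Writing $y_k=y_{\mathrm{rad}}(x_1,|z+z_k|)\,e_k$ and taking the inner product with $\n_0$ shows that $y_{\mathrm{rad}}(x_1,|z+z_k|)$ converges to $v:=y_\infty\cdot\n_0$, so that $y_\infty=v\,\n_0$ with $0\leq v\leq h(x_1)$ (nonnegativity and the upper bound both coming from Lemma \ref{lem1} (iii)). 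Substituting $y_\infty=v\,\n_0$ into \eqref{painhom} and using that $\n_0$ is a constant unit vector gives $\Delta v-x_1v-2v^3=0$, i.e. $v$ solves the scalar equation \eqref{scalarpde}.

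The crux is to show that $v$ is a \emph{minimal} solution of \eqref{scalarpde}, so that Lemma \ref{lema} forces $v(x)=h(x_1)$. Here I would invoke the minimality property Lemma \ref{lem1} (iv). Given a scalar test function $\phi\in C^\infty_0(\R^n;\R)$ with fixed compact support, I consider the perturbations $\psi_k(x_1,z):=\phi(x_1,z-z_k)\frac{z}{|z|}$; for $k$ large their support lies in $\R\times(\R^{n-1}\setminus\{0\})$, hence they are admissible in \eqref{minnn1} by Lemma \ref{lem1} (iv). Translating by $z_k$ and using the $z$-translation invariance of $E_{\mathrm{P_{II}}}$, the inequality $E_{\mathrm{P_{II}}}(y,\supp\psi_k)\leq E_{\mathrm{P_{II}}}(y+\psi_k,\supp\psi_k)$ becomes $E_{\mathrm{P_{II}}}(y_k,\supp\phi)\leq E_{\mathrm{P_{II}}}(y_k+\phi\,e_k,\supp\phi)$. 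Passing to the limit with the $C^1_{\mathrm{loc}}$ convergences $y_k\to v\,\n_0$ and $e_k\to\n_0$ (whose gradient vanishes), and using that $E_{\mathrm{P_{II}}}(w\,\n_0,\Omega)=\int_\Omega[\tfrac12|\nabla w|^2+\tfrac12 x_1w^2+\tfrac12 w^4]$ for a constant unit vector $\n_0$, I obtain $E_{\mathrm{P_{II}}}(v,\supp\phi)\leq E_{\mathrm{P_{II}}}(v+\phi,\supp\phi)$ for every scalar $\phi$. Thus $v$ is a nonnegative minimal solution of \eqref{scalarpde} bounded above by $h(x_1)$, and Lemma \ref{lema} yields $v(x)=h(x_1)$, i.e. $y_\infty=h(x_1)\,\n_0$. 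I expect this minimality passage to be the main obstacle: one must check the admissibility of $\psi_k$ and control the gradient of the receding direction field $e_k$. The vanishing of the angular contribution $\tfrac{n-2}{2}|y|^2/|z+z_k|^2$ as $|z_k|\to\infty$ is exactly what makes the limiting energy scalar, so that Lemma \ref{lema} becomes applicable.

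Finally, since the limit $h(x_1)\,\n_0$ is independent of the extracted subsequence, the full sequence $y_k$ converges to $h(x_1)\,\n_0$ in $C^1_{\mathrm{loc}}(\R^n;\R^{n-1})$. The radial statement follows by specializing $z_k=l\nu$ for a fixed unit vector $\nu\in\R^{n-1}$ and $l\to\infty$: restricting the convergence $y(x_1,z+l\nu)\to h(x_1)\nu$ to the ray $z=s\nu$ gives $|z+l\nu|=s+l$ and direction $\nu$, whence $y_{\mathrm{rad}}(x_1,s+l)\to h(x_1)$, with the radial derivative controlled by the $C^1_{\mathrm{loc}}(\R^n)$ convergence, yielding the claimed $C^1_{\mathrm{loc}}(\R^2;\R)$ limit.
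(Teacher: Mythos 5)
Your proposal is correct and follows essentially the same route as the paper's own proof: compactness from the bound $|y|\leq h(x_1)$ plus elliptic estimates and Ascoli, identification of the limit as $v\,\n_0$ with $v$ a nonnegative scalar solution, transfer of minimality via Lemma \ref{lem1} (iv) using exactly the translated test fields $\chi(x_1,z-z_k)\frac{z}{|z|}$, conclusion $v=h(x_1)$ from Lemma \ref{lema}, and full-sequence convergence by uniqueness of the limit. The only cosmetic difference is that the paper reads off the radial $C^1_{\mathrm{loc}}(\R^2;\R)$ statement from the identities $y_{\mathrm{rad}}(x_1,\sigma)=y_2(x_1,\sigma,0,\ldots,0)$ and its derivatives, whereas you specialize $z_k=l\nu$; both are equivalent.
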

\begin{proof}
In view of the bound $|y(x_1,z)|\leq h(x_1)$ provided by Lemma \ref{lem1} (iii), we obtain by the theorem of Ascoli that (up to subsequence) $ y_k$ converges in $C^1_{\mathrm{loc}}(  \R^n;\R^{n-1})$ to a solution $y_\infty:\R^n\to\R^{n-1}$ of \eqref{painhom}. In addition, since $\ y_k(x_1,z)=y_{\mathrm{rad}}(x_1,|z+z_k|)\frac{z+z_k}{|z+z_k|}$, and $\lim_{k\to\infty} \frac{z+z_k}{|z+z_k|}=\n_0$, we deduce that $ y_\infty(x_1,z)=\tilde y(x)\n_0$, with $\tilde y:\R^n\to\R$ a solution of \eqref{scalarpde}. 
By construction, $\tilde y$ is nonnegative. We are going to show that $\tilde y$ is also minimal. Indeed, given $\chi\in C^\infty_0(\R^n;\R)$, let $\psi_k(x_1,z):=\chi(x_1,z-z_k)\frac{z}{|z|}$ and $\phi_k(x_1,z):=\chi(x_1,z)\frac{z+z_k}{|z+z_k|}$. In view of Lemma \ref{lem1} (iv), we have 
$E_{\mathrm{P_{II}}}(y, \supp \psi_k)\leq E_{\mathrm{P_{II}}}(y+\psi_k, \supp \psi_k)$, or equivalently $E_{\mathrm{P_{II}}}(y_k, \supp \chi)\leq E_{\mathrm{P_{II}}}(y_k+\phi_k, \supp \chi)$. Next, passing to the limit, we obtain 
$E_{\mathrm{P_{II}}}(\tilde y \n_0, \supp \chi)\leq E_{\mathrm{P_{II}}}((\tilde y +\chi )\n_0, \supp \chi)$ i.e. $\tilde y$ is a minimal solution of \eqref{scalarpde}. Thus, since $\tilde y$ clearly satisfies $\tilde y(x)\leq h(x_1)$, we deduce from Lemma \ref{lema} that $\tilde y(x)=h(x_1)$, $\forall x\in\R^n$. Moreover, since this limit is uniquely determined, it is independent of the subsequence extracted from $\{y_k\}$. Finally, setting $y=(y_2,\ldots,y_n)\in\R^{n-1}$, we have 
$y_{\mathrm{rad}}(x_1,\sigma)=y_2(x_1,\sigma,0,\ldots,0)$, 
$\frac{\partial y_{\mathrm{rad}}}{\partial x_1}(x_1,\sigma)=\frac{\partial y_2}{\partial x_1}(x_1,\sigma,0,\ldots,0)$, and
$\frac{\partial y_{\mathrm{rad}}}{\partial \sigma}(x_1,\sigma)=\frac{\partial y_2}{\partial x_2}(x_1,\sigma,0,\ldots,0)$.
Therefore, $\lim_{l\to \infty}  y_{\mathrm{rad}}(x_1,\sigma+l)=h(x_1)$ holds in $C^1_{\mathrm{loc}}(\R^2;\R)$, according to what precedes.
\end{proof}

To establish the monotonicity properties stated in Theorem \ref{corpain2} (ii), we shall work with the projection $y_2$ of  the solution $y=(y_2,\ldots, y_{n})\in\R^{n-1}$ provided by Lemma \ref{lem1}. We shall first compute in Lemmas \ref{lll1} and \ref{lll2}, bounds for $\frac{\partial y_2}{\partial x_1}$ and $\frac{\partial y_2}{\partial x_2}$ when $x_1$ is large enough and $x_2>0$. Our main tool is a version of the maximum principle in unbounded domains \cite[Lemma 2.1]{beres}. We also utilize the asymptotic behaviour of $y$ provided by Lemmas \ref{lem44} and \ref{ass22}. Next, these bounds are extended to the whole space by applying the moving plane method (cf. Lemma \ref{movingplane}).

\begin{lemma}\label{lll1}
Let $y$ be the solution provided by Lemma \ref{lem1}. Then, we have $\frac{\partial y_{\mathrm{rad}}}{\partial x_1}(x_1,|z|)<0$, $\forall x_1\geq 0$, $\forall z\neq 0$. In addition, for every $d>0$, there holds $\sup_{|z|\geq d}  \frac{\partial y_{\mathrm{rad}}}{\partial x_1}(1,|z|)<0$, and  $\inf_{|z|\geq d}  y_{\mathrm{rad}}(1,|z|)>0$.
\end{lemma}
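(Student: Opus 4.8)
The plan is to differentiate the scalar equation satisfied by the first component $y_2$ of $y=(y_2,\dots,y_n)$ with respect to $x_1$ and to read off the sign of $P:=\partial_{x_1}y_2$ from a maximum principle. Writing $y_j=y_{\mathrm{rad}}(x_1,|z|)\frac{x_j}{|z|}$, the $O(n-1)$-equivariance of $y$ (Lemma \ref{lem1}) gives $|y|^2=y_{\mathrm{rad}}^2$, $\partial_{x_1}y_j=\partial_{x_1}y_{\mathrm{rad}}\,\frac{x_j}{|z|}$, and hence $\partial_{x_1}(|y|^2)\,y_2=2y_{\mathrm{rad}}^2\,\partial_{x_1}y_2$. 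Since each component solves $\Delta y_2=(x_1+2|y|^2)y_2$, differentiating in $x_1$ yields, on $\{z\neq0\}$,
\begin{equation}
\Delta P=(x_1+6\,y_{\mathrm{rad}}^2)\,P+y_2 .
\end{equation}
On the slice $z=(\sigma,0,\dots,0)$ with $\sigma>0$ one has $P=\partial_{x_1}y_{\mathrm{rad}}$, and since $y_{\mathrm{rad}}$ depends only on $(x_1,\sigma)$ it suffices to prove $P<0$ in the quarter-space $\Omega:=\{x_1>0,\ x_2>0\}$. There the zeroth order coefficient $x_1+6y_{\mathrm{rad}}^2\geq0$ and the forcing $y_2=y_{\mathrm{rad}}\frac{x_2}{|z|}>0$ are both nonnegative, so $P$ is a bounded subsolution $\Delta P\geq(x_1+6y_{\mathrm{rad}}^2)P$ of an operator with nonnegative potential.

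The next step is to collect the boundary and asymptotic information needed to run \cite[Lemma 2.1]{beres}. On $\{x_2=0\}$ we have $y_2\equiv0$ and $P\equiv0$; Lemma \ref{expcvv} together with interior elliptic estimates gives $P\to0$ as $x_1\to+\infty$ (uniformly in $z$); and Lemma \ref{ass22}, which yields $y_{\mathrm{rad}}(x_1,\sigma)\to h(x_1)$ in $C^1_{\mathrm{loc}}$ as $\sigma\to\infty$, gives $\partial_{x_1}y_{\mathrm{rad}}(x_1,\sigma)\to h'(x_1)<0$, so that $\limsup P\leq0$ as $|z|\to\infty$. With these ingredients the unbounded-domain maximum principle forces $P\leq0$ in $\Omega$, and the strong maximum principle (using the strict positivity of the forcing $y_2$, equivalently Hopf's lemma) upgrades this to $P<0$, i.e. $\partial_{x_1}y_{\mathrm{rad}}<0$ for $x_1\geq0$ and $\sigma>0$.

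I expect the delicate point to be the control of $P$ on the face $\{x_1=0\}$ of $\Omega$: a priori nothing prevents $P$ from being positive there, whereas the clean sign of the coefficient $x_1+6y_{\mathrm{rad}}^2$ is only available for $x_1\geq0$ (this is precisely why the statement is limited to $x_1\geq0$, the range $x_1<0$ being recovered afterwards by the moving plane method of Lemma \ref{movingplane}). The plan is to rule out a positive maximum on $\{x_1=0\}$ by exploiting the strictly positive forcing term together with Hopf's lemma and the growth of the potential as $x_1\to+\infty$, which provides an exponential barrier of the type already used in Lemma \ref{expcvv}. Finally, the uniform estimates at $x_1=1$ follow by combining the strict inequalities just obtained with compactness: on each compact interval $\sigma\in[d,M]$, continuity and the pointwise bounds $\partial_{x_1}y_{\mathrm{rad}}(1,\sigma)<0$ and $y_{\mathrm{rad}}(1,\sigma)>0$ (the latter from Lemma \ref{lem1}(iii)) give strict bounds, while for $\sigma\geq M$ large the convergences $\partial_{x_1}y_{\mathrm{rad}}(1,\sigma)\to h'(1)<0$ and $y_{\mathrm{rad}}(1,\sigma)\to h(1)>0$ from Lemma \ref{ass22} keep both quantities bounded away from $0$; hence $\sup_{|z|\geq d}\partial_{x_1}y_{\mathrm{rad}}(1,|z|)<0$ and $\inf_{|z|\geq d}y_{\mathrm{rad}}(1,|z|)>0$.
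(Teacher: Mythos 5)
Your linearized equation $\Delta P=(x_1+6y_{\mathrm{rad}}^2)P+y_2$ for $P:=\partial_{x_1}y_2$ is correct (it is exactly the relation \eqref{eqqq11b} that the paper uses later, in Lemma \ref{lll2}), and your treatment of the second half of the statement (the uniform bounds at $x_1=1$ via Lemma \ref{ass22} plus compactness on bounded ranges of $\sigma$) coincides with the paper's. But the core of your argument has a genuine gap, which you flag yourself and then do not close: to run the maximum principle of \cite[Lemma 2.1]{beres} in the quarter-space $\Omega=\{x_1>0,\ x_2>0\}$ you must know beforehand that $P\leq 0$ on the face $\{x_1=0,\ x_2\geq 0\}$, and this is precisely the case $x_1=0$ of the conclusion you are proving --- the argument is circular. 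The remedies you sketch do not supply this sign: Hopf's lemma requires a solution whose sign is already known in a neighborhood of the boundary point, and exponential barriers of the type used in Lemma \ref{expcvv} control the \emph{magnitude} of $y$ and its derivatives as $x_1\to+\infty$, not the \emph{sign} of $P$ on $\{x_1=0\}$. Nor can you enlarge the domain to the half-space $\{x_2>0\}$ to avoid this face: for $x_1<0$ the zeroth-order coefficient $x_1+6y_{\mathrm{rad}}^2$ loses its sign (for instance near the axis $z=0$, where $y_{\mathrm{rad}}$ is small), so the hypotheses of the unbounded-domain maximum principle fail there.

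The paper's proof sidesteps the issue entirely by a reflection (moving-plane) argument in the $x_1$-direction instead of differentiating the equation. For each $\lambda\geq 0$ it sets $\psi_\lambda(x):=y_2(x_1,z)-y_2(2\lambda-x_1,z)$ on $D_\lambda=\{x_1>\lambda,\ x_2>0\}$. This function vanishes on the \emph{entire} boundary of $D_\lambda$ (on $\{x_1=\lambda\}$ by construction, on $\{x_2=0\}$ because $y_2$ does), is bounded above, and satisfies $\Delta\psi_\lambda-c(x)\psi_\lambda=2(x_1-\lambda)\,y_2(2\lambda-x_1,z)\geq 0$ with $c(x)=x_1+2\bigl(|y(x)|^2+|y(x)||y(2\lambda-x_1,z)|+|y(2\lambda-x_1,z)|^2\bigr)\geq 0$ --- this is where the restriction $\lambda\geq 0$ enters. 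Hence no boundary data on any vertical face needs to be known a priori: \cite[Lemma 2.1]{beres} gives $\psi_\lambda<0$ in $D_\lambda$, and Hopf's lemma applied on the reflection hyperplane yields $\partial_{x_1}\psi_\lambda(\lambda,z)=2\,\partial_{x_1}y_2(\lambda,z)<0$ for $x_2>0$, which is the desired strict sign at $x_1=\lambda$, for every $\lambda\geq 0$. If you wish to keep your linearization strategy, you would need an independent argument producing the sign of $P$ on $\{x_1=0\}$; the reflection device is exactly the tool that makes such an argument unnecessary, and your linearized inequality is the right tool only at the next stage (Lemma \ref{lll2}), once the sign of $\partial_{x_1}y_2$ is already in hand.
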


\begin{proof}
Given $\lambda\geq 0$, we define the function $\psi_\lambda(x_1,z):=y_2(x_1,z)-y_2(-x_1+2\lambda,z)$ for $x\in D_\lambda:=\{ (x_1,\ldots,x_n): x_1 > \lambda , x_2 >0\}$.  One can check that
$\psi_\lambda=0$ on $\partial D_\lambda$, and $$\Delta \psi_\lambda-c(x)\psi_\lambda= 2(x_1-\lambda)y_2(-x_1+2\lambda,z)\geq 0 \text{ on } D_\lambda,$$ with $c(x)=x_1+2(|y(x)|^2+|y(x)||y(-x_1+2\lambda,z)|+|y(-x_1+2\lambda,z)|^2)\geq 0$.
Furthermore, $\psi_\lambda$ is bounded above and not identically zero (cf. Lemma \ref{lem1} (iii) and Lemma \ref{lem44}).
As a consequence of the maximum principle (cf. \cite[Lemma 2.1]{beres}), it follows that
$\psi_{\lambda}(x)< 0$, $\forall x_1> \lambda$, $\forall x_2> 0$, $\forall (x_3,\ldots,x_n)\in\R^{n-2}$, and thus by Hopf's Lemma we have $\frac{\partial \psi_\lambda}{\partial x_1}(\lambda,z)=2\frac{\partial y_2}{\partial x_1}(\lambda,x_2,\ldots,x_n)<0$, provided that $ x_2>0$. This proves that $\frac{\partial y_{\mathrm{rad}}}{\partial x_1}(x_1,|z|)<0$, $\forall x_1\geq 0$, $\forall z\neq 0$. Finally, Lemma \ref{ass22} implies that $\lim_{\sigma\to\infty}y_{\mathrm{rad}}(1,\sigma)=h(1)$, and $\lim_{\sigma\to\infty}\frac{\partial y_{\mathrm{rad}}}{\partial x_1}(1,\sigma)=h'(1)$. Therefore, it is clear that $\sup_{|z|\geq d}  \frac{\partial y_{\mathrm{rad}}}{\partial x_1}(1,|z|)<0$, as well as $\inf_{|z|\geq d}  y_{\mathrm{rad}}(1,|z|)>0$ hold.
\end{proof}

\begin{lemma}\label{lll2}
Let $y=(y_2,\ldots,y_m)\in\R^{n-1}$ be the solution provided by Lemma \ref{lem1}. Then, for every vector $\n=(\cos(\theta+\frac{\pi}{2}), \sin(\theta+\frac{\pi}{2}),0,\ldots,0)\in\R^n$, with $\theta \in (0,\frac{\pi}{2})$, there exists $s_\n>0$ such that $\nabla y_2(x)\cdot \n>0$ holds, provided that $ x_1 >s_\n$, and  $x_2 >0$.
\end{lemma}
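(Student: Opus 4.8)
The plan is to turn the statement into a scalar maximum-principle argument for the component $y_2$, and to use the super-exponential decay of $y$ as $x_1\to+\infty$ (Lemma \ref{expcvv}) to dominate the nonlinear term. Since $y=y_{\mathrm{rad}}(x_1,|z|)\frac{z}{|z|}$, the function $y_2$ solves the scalar linear equation $\Delta y_2=c(x)y_2$ with the fixed coefficient $c(x):=x_1+2|y(x)|^2=x_1+2y_{\mathrm{rad}}^2$, and in particular $c\geq s_\n>0$ on $\{x_1>s_\n\}$. Writing $\n=(-\sin\theta,\cos\theta,0,\dots,0)$ and $\partial_\n=\n\cdot\nabla$, I would imitate the reflection argument of Lemma \ref{lll1}, but across the \emph{tilted} hyperplane $T_\mu=\{\n\cdot x=\mu\}$: letting $x^\mu$ be the reflection of $x$ across $T_\mu$ and $\psi_\mu(x):=y_2(x)-y_2(x^\mu)$, the fact that reflections are isometries gives $\Delta\psi_\mu-c(x)\psi_\mu=(c(x)-c(x^\mu))\,y_2(x^\mu)$.

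The crux is the sign of this source. On $\Sigma_\mu^+=\{\n\cdot x>\mu\}$ one has $(x^\mu)_1-x_1=2\sin\theta\,(\n\cdot x-\mu)>0$, so the linear part of $c(x)-c(x^\mu)$ is negative, while the nonlinear part $2(y_{\mathrm{rad}}(x)^2-y_{\mathrm{rad}}(x^\mu)^2)$ is, by Lemma \ref{expcvv} and the interior gradient estimates, of order $e^{-\frac43 x_1^{3/2}}|x-x^\mu|$ and hence negligible against $2\sin\theta\,(\n\cdot x-\mu)$ once $x_1$ exceeds a threshold $s_\n$ depending on $\theta$; thus $c(x)<c(x^\mu)$ there. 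I would then work in the tilted wedge $\Omega_\mu:=\{\n\cdot x>\mu,\ (x^\mu)_2>0\}$, on which $x_2>(x^\mu)_2>0$ and $y_2(x^\mu)\geq0$, so that $\psi_\mu$ is a supersolution of $\Delta-c$. Its boundary data come for free: $\psi_\mu=0$ on $T_\mu$, and $\psi_\mu=y_2\geq0$ on the face $\{(x^\mu)_2=0\}$. For $\mu\leq-s_\n\sin\theta$ the wedge satisfies $\Omega_\mu\subset\{x_1>s_\n\}$ — so it never touches the artificial hyperplane $\{x_1=s_\n\}$ — and the maximum principle in unbounded domains (\cite[Lemma 2.1]{beres}, made robust here by $c\geq s_\n>0$ and the boundedness of $y$) yields $\psi_\mu\geq0$. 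Since $\psi_\mu\not\equiv0$, Hopf's lemma on $T_\mu$ gives $\partial_\n\psi_\mu=2\,\n\cdot\nabla y_2>0$; as $\mu$ ranges over $(-\infty,-s_\n\sin\theta]$ the points $p\in T_\mu$ exhaust the sub-wedge $\{x_1>s_\n,\ 0<x_2\leq\tan\theta\,(x_1-s_\n)\}$.

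It remains to cover the complementary region $\{x_1>s_\n,\ x_2>\tan\theta(x_1-s_\n)\}$, where the reflected wedge would dip below $x_1=s_\n$. There I would argue directly on $v:=\n\cdot\nabla y_2$, which solves $\Delta v-cv=(\partial_\n c)\,y_2$ with $\partial_\n c=-\sin\theta+4y_{\mathrm{rad}}\,\partial_\n y_{\mathrm{rad}}<0$ for $x_1>s_\n$ (again by the decay), so that $v$ is a supersolution; on $\{x_2=0\}$ one computes $v=\cos\theta\,y_{\mathrm{rad}}/|z|\geq0$, along the ray $\{x_2=\tan\theta(x_1-s_\n)\}$ one already has $v>0$ from the wedge step, and as $|z|\to\infty$ Lemma \ref{ass22} gives $y\to h(x_1)\n_0$ in $C^1_{\mathrm{loc}}$, whence $v\to-\sin\theta\,h'(x_1)(\n_0)_2\geq0$ because $h'<0$ on $(0,\infty)$. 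Feeding these boundary and asymptotic signs into the maximum principle (with $c\geq s_\n>0$) gives $v\geq0$, and the strong maximum principle then upgrades this to $v>0$.

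The step I expect to be the main obstacle is precisely this region management. The slanted reflection, unlike the vertical one in Lemma \ref{lll1}, does not leave the linear term $x_1y$ invariant; this non-invariance is what forces the restriction to large $x_1$ and confines the clean reflection argument to the wedge near the $x_1$-axis. Patching it with the far-field behaviour of Lemma \ref{ass22} without leaving an intermediate gap, and controlling the borderline escape directions $z\to\infty$ with $(\n_0)_2=0$ (where the asymptotic value of $v$ degenerates to $0$), while pinning down a single threshold $s_\n$ compatible with all these requirements, is the delicate quantitative heart of the argument.
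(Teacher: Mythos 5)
Your tilted-reflection strategy has two concrete breakdowns, and they are fatal to the proposal as written. First, the containment $\Omega_\mu\subset\{x_1>s_\n\}$, on which your source-sign argument and your use of \cite[Lemma 2.1]{beres} both rest, is false for $\theta\in(\frac{\pi}{4},\frac{\pi}{2})$. In the $(x_1,x_2)$-plane, $\Omega_\mu=\{\n\cdot x>\mu,\ (x^\mu)_2>0\}$ is the wedge with vertex $(\lambda,0)$, $\lambda=-\mu/\sin\theta$, spanned by the directions of angle $\phi\in(\theta,2\theta)$ (its second boundary face is the reflection of $\{x_2=0\}$ across $T_\mu$, a line of angle $2\theta$). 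As soon as $2\theta>\frac{\pi}{2}$, this wedge contains points with $x_1\to-\infty$. There the decay of Lemma \ref{expcvv} is unavailable (by Lemma \ref{ass22} one has $|y|\approx h(x_1)\approx\sqrt{|x_1|/2}$ for large $|z|$), so the nonlinear part of $c(x)-c(x^\mu)$ is not negligible against $2\sin\theta\,(\n\cdot x-\mu)$; moreover $c=x_1+2|y|^2$ is not bounded below by a positive constant (near the $x_1$-axis $c\approx x_1<0$), so the maximum principle in the unbounded wedge does not apply. This is not a technicality: it is exactly the difficulty that forces the paper, in Lemma \ref{movingplane}, to treat $\theta\in[\frac{\pi}{4},\frac{\pi}{2})$ by a separate argument based on far-field asymptotics — and that lemma can afford to do so only because Lemma \ref{lll2} is already available as the starting point of the moving-plane scheme.

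Second, and independently of $\theta$, your patching step does not close. The complementary region $U=\{x_1>s_\n,\ x_2>\tan\theta(x_1-s_\n)\}$ has boundary $\{x_1=s_\n,\ x_2\geq0\}$ together with the tilted ray; the hyperplane $\{x_2=0\}$, on which you compute $v=\cos\theta\,y_{\mathrm{rad}}/|z|\geq0$, is not part of $\partial U$. On the face $\{x_1=s_\n,\ x_2>0\}$ you have no sign information for $v=\n\cdot\nabla y_2$ — that sign is precisely what the lemma asserts, and nothing proved before gives it there — so the maximum principle cannot be run on $U$. If instead you enlarge the region to $\{x_2>\max(0,\tan\theta(x_1-s_\n))\}$, so that its boundary really is $\{x_2=0\}$ plus the ray, then it reaches $x_1\to-\infty$, where again $c\geq0$ fails and, worse, your claim $\partial_\n c<0$ fails: there $\partial_{x_1}|y|^2\to-\tfrac12$, so $-\sin\theta\,(1+2\partial_{x_1}|y|^2)\to0$, while $2\cos\theta\,\partial_{x_2}|y|^2\geq0$, so the source $(\partial_\n c)\,y_2$ loses its favourable sign. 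The paper avoids both problems by never tilting a reflection before the threshold is found: on the fixed domain $\{x_1>1,\ x_2>0\}$, whose data on $\{x_1=1\}$ are supplied by Lemma \ref{lll1}, it proves the two comparison inequalities $k_1\partial_{x_1}y_2\leq-\sqrt{x_1}\,y_2$ and $\partial_{x_2}y_2\geq-k_2\,y_2$ via explicit supersolutions and \cite[Lemma 2.1]{beres}, and then concludes purely algebraically: $\nabla y_2\cdot\n\geq\big(\frac{\sqrt{x_1}}{k_1}\sin\theta-k_2\cos\theta\big)y_2>0$ once $x_1>(k_1k_2/\tan\theta)^2$. The $\sqrt{x_1}$ gain in the first inequality is the mechanism that manufactures the threshold $s_\n$; your scheme has nothing playing this role, which is why the "region management" you flagged as the delicate heart of the argument cannot be repaired within it.
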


\begin{proof}
Let $(e_2,\ldots,e_n)$ be the canonical basis of $\R^{n-1}$.
Our first claim is that there is a constant $k_1>0$, such that $k_1 \frac{\partial y_2}{\partial x_1} (x)\leq -\sqrt{x_1} y_2(x)$, provided that $x_1\geq 1$, and $x_2\geq 0$. Indeed, let
\begin{equation}\label{psi9}
\psi(x)=k_1 \frac{\partial y_2}{\partial x_1} (x)+\sqrt{x_1}  y_2( x)=\Big(k_1\frac{\partial y_{\mathrm{rad}}}{\partial x_1}(x_1,|z|)+\sqrt{x_1} y_{\mathrm{rad}}(x_1,|z|)\Big)\frac{x_2}{|z|}, 
\end{equation}
for $x\in D:=\{x:\R^n:x_1>1, x_2>0\}$, where the constant $k_1>0$ will be adjusted later. It is clear that $\psi$ vanishes on the hyperplane $x_2=0$. We also notice that $\frac{\partial^2 y_2}{\partial x_1\partial x_2}(1,0,x_3,\ldots,x_n)<0$ by Hopf's Lemma, since the function $\frac{\partial y_2}{\partial x_1}$ vanishes on $\{x_1=1,x_2=0\}$, is negative on $\{ x_1> 0,  x_2> 0\}$, and satisfies 
\begin{equation}\label{eqqq11b}
\Delta \frac{\partial y_2}{\partial x_1}= y_2 +(x_1+6 | y|^2)\frac{\partial y_2}{\partial x_1}\geq(x_1+6|y|^2)\frac{\partial y_2}{\partial x_1} \text{ on } D.
\end{equation}
As a consequence, when $k_1$ is large enough, there exists $d>0$ such that $\psi(1,z)\leq 0$, provided that $|z|\leq d$ and $x_2\geq 0$. In addition, \eqref{psi9} and $\sup_{|z|\geq d}  \frac{\partial y_{\mathrm{rad}}}{\partial x_1}(1,|z|)<0$, $\forall d>0$, imply that when $k_1$ is large enough, we have $\psi(1,x_2,\ldots,x_n)\leq 0$, $\forall x_2\geq 0$, $\forall (x_3,\ldots,x_n)\in\R^{n-2}$. Next, we compute
\begin{align*}
\Delta \psi&=\Big(x_1+6|y|^2+\frac{1}{k_1\sqrt{x_1}}\Big)k_1\frac{\partial y_2}{\partial x_1}+\Big(x_1+2|y|^2+\frac{k_1}{\sqrt{x_1}}-\frac{1}{4x_1^2}\Big) \sqrt{x_1}y_2\\
&=\Big(x_1+2|y|^2+\frac{k_1}{\sqrt{x_1}}-\frac{1}{4x_1^2}\Big) \psi+\Big(4|y|^2+\frac{1}{k_1\sqrt{x_1}}-\frac{k_1}{\sqrt{x_1}}+\frac{1}{4x_1^2}\Big) k_1 \frac{\partial y_2}{\partial x_1}.
\end{align*}
By choosing $k_1$ large enough we can ensure that $\big(x_1+2|y|^2+\frac{k_1}{\sqrt{x_1}}-\frac{1}{4x_1^2}\big) \geq 0$ and $\big(4|y|^2+\frac{1}{k_1\sqrt{x_1}}-\frac{k_1}{\sqrt{x_1}}+\frac{1}{4x_1^2}\big)\leq 0$, when $x_1\geq 1$, and $x_2\geq 0$. Thus, our claim follows from the maximum principle (cf. \cite[Lemma 2.1]{beres}).

Similarly, we are going to establish that there is a constant $k_2>0$, such that $\frac{\partial y_2}{\partial x_2} (x)\geq -k_2 y_2(x)$, provided that $x_1\geq 1$, and $x_2\geq 0$. 
To do this we let
\begin{equation}\label{psipsi}
\psi(x)=- \frac{\partial y_2}{\partial x_2}(x)-k_2 y_2(x) =-\frac{x_2}{|z|}\Big(\frac{\partial y_{\mathrm{rad}}}{\partial \sigma}(x_1,|z|)\frac{x_2}{|z|}+k_2y_{\mathrm{rad}}(x_1,|z|)\Big) -y_{\mathrm{rad}}(x_1,|z|)\Big(\frac{|z|^2-x_2^2}{|z|^3}\Big)\text{ for $x\in D$},
\end{equation}
where the constant $k_2$ will again be adjusted later. We first notice that
$\frac{\partial y_2}{\partial x_2}(x_1,0,x_3,\ldots,x_n)>0$ holds on the hyperplane $x_2=0$, since the function $y_2$ vanishes on the hyperplane $x_2=0$, is positive in $\{x_2> 0\}$, and satisfies $\Delta y_2=(x_1+2|y|^2)y_2$. 
As a consequence, when $k_2$ is large enough, there exists $d>0$ such that $\psi(1,z)\leq 0$, provided that $|z|\leq d$ and $x_2\geq 0$. In addition, \eqref{psipsi} and $\inf_{|z|\geq d}  y_{\mathrm{rad}}(1,|z|)>0$, $\forall d>0$, imply that when $k_1$ is large enough, we have $\psi(1,x_2,\ldots,x_n)\leq 0$, provided that $x_2\geq 0$. On the other hand, it is clear that $\psi(x_1,0,x_3,\ldots,x_n)<0$, $\forall x_1\geq 1$, $\forall (x_3,\ldots,x_n)\in\R^{n-2}$. Next, we compute successively for $x\in D$:
$$\frac{\partial y}{\partial x_2}(x)=\frac{\partial y_{\mathrm{rad}}}{\partial \sigma}(x_1,|z|)\frac{x_2}{|z|}\frac{z}{|z|}+y_{\mathrm{rad}}(x_1,|z|) \Big(\frac{e_2}{|z|}-\frac{x_2z}{|z|^3}\Big),$$
$$y(x)\cdot \frac{\partial y}{\partial x_2}(x)=\frac{\partial y_{\mathrm{rad}}}{\partial \sigma}(x_1,|z|) y_{\mathrm{rad}}(x_1,|z|)       \frac{x_2}{|z|} ,$$
$$\Big(y(x)\cdot \frac{\partial y}{\partial x_2}(x)\Big)y_2(x)=\frac{\partial y_{\mathrm{rad}}}{\partial \sigma}(x_1,|z|) y_{\mathrm{rad}}^2(x_1,|z|)       \frac{x_2^2}{|z|^2} \leq |y(x)|^2\frac{\partial y_2}{\partial x_2}(x),$$
$$\Delta \frac{\partial y_2}{\partial x_2}(x)=(x_1+2|y(x)|^2) \frac{\partial y_2}{\partial x_2}(x) +4 \Big( y(x)\cdot \frac{\partial y}{\partial x_2}(x)\Big)y_2(x)\leq (x_1+6|y(x)|^2) \frac{\partial y_2}{\partial x_2}(x),$$
$$\Delta y_2(x)=(x_1+2|y(x)|^2)y_2(x)\leq (x_1+6|y(x)|^2)y_2(x),$$
\begin{align*}
\Delta \psi&\geq (x_1+6|y|^2)\psi.
\end{align*}
Thus, it follows from the maximum principle that $\psi\leq 0$ in $D$. 
Finally, given $\theta\in (0,\pi/2)$, we have
\[\nabla y_2(x)\cdot \n=-\frac{\partial y_2}{\partial x_1}(x)\sin\theta+\frac{\partial y_2}{\partial x_2}(x) \cos\theta\geq\Big(\frac{\sqrt{x_1}}{k_1}\sin\theta-k_2  \cos\theta\Big) y_2(x), \ \forall x \in [1,\infty)\times[ 0,\infty)\times\R^{n-2},\]
and therefore $\nabla y_2(x)\cdot \n>0$ provided that $x_1>s_\n:=\big(\frac{k_1k_2}{\tan \theta}\big)^2$, and $x_2>0$.
\end{proof}

\begin{lemma}\label{movingplane}
Let $y=(y_2,\ldots,y_m)\in\R^{n-1}$ be the solution provided by Lemma \ref{lem1}, and let $\theta\in (0,\frac{\pi}{2})$ be fixed. For every $\lambda \in \R$, we consider the reflection $\rho_\lambda$ with respect to the hyperplane $\Gamma_\lambda:=\{x\in\R^n: x_2=\tan\theta (x_1-\lambda)\}$, and the domain
$D_\lambda:=\{x\in\R^n: 0<x_2<\tan\theta (x_1-\lambda)\}$.
Then, the function $\psi_\lambda(x):=y_2(x)-y_2(\rho_\lambda(x))$
is negative in $D_\lambda$, for every $\lambda\in\R$.
\end{lemma}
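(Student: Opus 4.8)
The plan is to run the method of moving tilted hyperplanes with the family $\Gamma_\lambda$, using as the central device that the component $y_2$ is itself a positive solution of the linearized operator. Writing $\tilde x:=\rho_\lambda(x)$ and $c(x):=x_1+2|y(x)|^2$, the relation $\Delta y_2=c\,y_2$ and the fact that $\rho_\lambda$ is an isometry give
\[
\Delta\psi_\lambda-c(x)\psi_\lambda=[c(x)-c(\tilde x)]\,y_2(\tilde x)\qquad\text{in }D_\lambda .
\]
I would first record the boundary and growth data. On $\Gamma_\lambda$ one has $\rho_\lambda=\mathrm{id}$, so $\psi_\lambda=0$; on $\{x_2=0\}\cap\partial D_\lambda$ one has $y_2=0$ while $(\rho_\lambda x)_2>x_2\ge 0$ forces $y_2(\tilde x)>0$, hence $\psi_\lambda<0$; and since $|y|\le h(x_1)\le h(\lambda)$ on $D_\lambda$, the function $\psi_\lambda$ is bounded above. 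Thus $\psi_\lambda\le0$ on $\partial D_\lambda$ and $\psi_\lambda$ is bounded above, which are exactly the hypotheses of the maximum principle of \cite[Lemma 2.1]{beres}.

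The key point that makes this maximum principle available despite the indefinite zeroth-order coefficient is that $y_2>0$ on $\{x_2>0\}\supset D_\lambda$ and $\Delta y_2-c\,y_2=0$; that is, $\mathcal L:=\Delta-c(x)$ admits a positive supersolution on $D_\lambda$. I would therefore pass to $v:=\psi_\lambda/y_2$, which by the standard computation solves the divergence-form equation
\[
\operatorname{div}\big(y_2^2\,\nabla v\big)=y_2\,[c(x)-c(\tilde x)]\,y_2(\tilde x)\qquad\text{in }D_\lambda ,
\]
with no zeroth-order term. In these variables the sign of $\psi_\lambda$ is dictated solely by the sign of the right-hand side, that is by whether $c(x)\ge c(\tilde x)$ on $D_\lambda$, the boundary conditions $\psi_\lambda\le0$ on $\partial D_\lambda$ being already recorded.

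For the starting step I would take $\lambda$ large. Then $c(x)\ge x_1>\lambda>0$ on $D_\lambda$, and, using $|y|\le h(x_1)$ together with the asymptotics \eqref{asy0} of $h$, one checks that $c(x)-c(\tilde x)\ge0$ on all of $D_\lambda$: near $\Gamma_\lambda$ the positive term $x_1-\tilde x_1$ dominates the exponentially small contribution of $|y|^2-|y(\tilde x)|^2$, while near $\{x_2=0\}$ the possible growth of $2h(\tilde x_1)^2$ as $\tilde x_1\to-\infty$ is exactly balanced by $x_1-\tilde x_1$. The maximum principle then gives $\psi_\lambda\le0$, and the strong maximum principle upgrades it to $\psi_\lambda<0$. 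For the continuation I would use the continuity method: set $\mu:=\inf\{\lambda_0:\ \psi_\lambda<0\text{ in }D_\lambda\text{ for all }\lambda\ge\lambda_0\}$, assume $\mu>-\infty$, deduce $\psi_\mu\le0$ by continuity and $\psi_\mu<0$ by the strong maximum principle, and push the plane slightly below $\mu$. Here strict negativity persists on compact subsets by continuity, in the far field it is supplied by Lemmas \ref{lll2} and \ref{ass22}, and near $\partial D_\mu$ the relevant region is a thin slab on which a narrow-domain maximum principle applies; this contradicts the minimality of $\mu$ and forces $\mu=-\infty$, i.e. $\psi_\lambda<0$ in $D_\lambda$ for every $\lambda$.

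I expect the main obstacle to be precisely the sign of the source $[c(x)-c(\tilde x)]y_2(\tilde x)$. The coefficient $c(x)=x_1+2|y|^2$ is negative for $x_1\ll0$ (since $2h(x_1)^2\sim|x_1|$), and near $\Gamma_\lambda$, where $\tilde x\to x$, the difference $c(x)-c(\tilde x)$ degenerates and cannot be signed pointwise once $\lambda$ is very negative. This is exactly why the plane has to be moved rather than the maximum principle invoked once, and why the a priori bound $|y|<h(x_1)$ — the positive Painlev\'e barrier $h$ — together with the positive supersolution $y_2$ are indispensable: the former controls the cubic term in the far field and the latter neutralizes the indefinite coefficient, so that the continuation can be carried out with the narrow-domain maximum principle near the boundary.
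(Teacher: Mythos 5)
Your differential identity $\Delta\psi_\lambda-c(x)\psi_\lambda=[c(x)-c(\tilde x)]\,y_2(\tilde x)$, $c(x)=x_1+2|y(x)|^2$, is correct, but the strategy built on it --- sign the right-hand side, then apply a maximum principle --- has a genuine gap, and it is not located where your last paragraph places it. The failure is not the degeneration of $c(x)-c(\tilde x)$ near $\Gamma_\lambda$ (there the source tends to $0$, which is harmless); it is at the vortex core. Fix $\lambda<0$ with $|\lambda|$ large and take $x=(\lambda+1,\epsilon,0,\ldots,0)\in D_\lambda$ with $\epsilon>0$ small. Then $|y(x)|=y_{\mathrm{rad}}(\lambda+1,\epsilon)\to0$ as $\epsilon\to0$, while the reflected point $\tilde x\approx(\lambda+\cos2\theta,\,\sin2\theta,\,0,\ldots,0)$ lies at distance $\approx\sin2\theta$ from the core, where Lemma \ref{lem44} gives $|y(\tilde x)|^2\approx|\lambda|/2$. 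Hence
\begin{equation*}
c(x)-c(\tilde x)=(x_1-\tilde x_1)+2|y(x)|^2-2|y(\tilde x)|^2\approx 2\sin^2\theta-|\lambda|<0,
\end{equation*}
so your source term is strictly negative, of size $|\lambda|$, at interior points of $D_\lambda$, while $y_2(\tilde x)>0$. Consequently $\Delta\psi_\lambda-c\,\psi_\lambda<0$ somewhere in $D_\lambda$ for every sufficiently negative $\lambda$, and none of the principles you invoke (the comparison of \cite[Lemma 2.1]{beres}, the strong maximum principle at the critical position $\mu$, the narrow-domain principle in a slab near $\partial D_\mu$) applies to your formulation: each requires the inhomogeneous term to be nonnegative. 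Since the lemma is claimed for \emph{every} $\lambda\in\R$, the continuation must pass through this regime; moving the plane does not cure an unsigned source, so the gap is fatal rather than technical.

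The missing ingredient is the structural inequality \eqref{eqa4} that the paper extracts from the $O(n-1)$-equivariance of $y$, and which your purely scalar decomposition cannot produce. Writing $y_2(p,q,\zeta)=|y(p,q,\zeta)|\,q/\sqrt{q^2+|\zeta|^2}$ as in \eqref{eqa1}--\eqref{eqa2}, and using that the angular factor \emph{increases} under the reflection (inequality \eqref{eqa3}, since $q<q'$), one absorbs the dangerous terms into the coefficient by taking $c=p+2\bigl(|y(x)|^2+|y(x)||y(\tilde x)|+|y(\tilde x)|^2\bigr)$: the would-be source then reorganizes as $(p-p')\,y_2(\tilde x)$ plus a multiple of the nonnegative angular difference, giving the homogeneous inequality $\Delta\psi_\lambda-c\,\psi_\lambda\geq0$ on all of $D_\lambda$ for \emph{every} $\lambda\in\R$. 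With this, the strong maximum principle and Hopf's lemma do apply at the critical position (for $\psi_\lambda\le0$ one may replace $c$ by $c^+$). Note also two further discrepancies with the actual argument: the starting step in the paper comes directly from the directional monotonicity of Lemma \ref{lll2} (integrating $\nabla y_2\cdot\n>0$ along the reflection segment), not from signing $c(x)-c(\tilde x)$, whose verification near $\Gamma_\lambda$ and near the corner would anyway require derivative information on $|y|^2$ that the barrier $|y|\leq h(x_1)$ alone does not provide; and the continuation is not a narrow-domain argument but a compactness argument on sequences of touching points, split into the cases $\zeta_k$ bounded and $|\zeta_k|\to\infty$ (and $p_k\to\infty$ when $\theta\geq\pi/4$), the escaping cases being resolved through the ratio identities \eqref{eqa1}--\eqref{eqa2}, the asymptotics of Lemmas \ref{lem44} and \ref{ass22}, and a mean value argument along $\n$ --- a part of the proof for which your sketch has no counterpart.
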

\begin{proof}
We set $\n=(\cos(\theta+\frac{\pi}{2}), \sin(\theta+\frac{\pi}{2}),0,\ldots,0)\in\R^n$, 
and denote by $(p',q',\zeta)\in\R\times\R\times\R^{n-2}$ the image by $\rho_\lambda$ of a point $(p,q,\zeta)\in D_\lambda$, and by $D'_\lambda$ the set $\rho_\lambda(D_\lambda)$.
It is obvious that $\psi_\lambda(x_1,0,\zeta)< 0$, $\forall x_1> \lambda$, $\forall \zeta\in\R^{n-2}$, and that $\psi_\lambda(x)= 0$, $\forall x\in \Gamma_\lambda$. 
We can also check that for $(p,q,\zeta)\in D_\lambda$, we have $p>p'$ and $q<q'$, as well as:
\begin{subequations}
\begin{equation}\label{eqa1}
y_2(p,q,\zeta)=|y(p,q,\zeta)|\frac{q}{\sqrt{q^2+|\zeta|^2}},
\end{equation}
\begin{equation}\label{eqa2}
y_2(p',q',\zeta)=|y(p',q',\zeta)|\frac{q'}{\sqrt{(q')^2+|\zeta|^2}},
\end{equation}
\begin{equation}\label{eqa3}
\frac{q}{\sqrt{q^2+|\zeta|^2}}\leq\frac{q'}{\sqrt{(q')^2+|\zeta|^2}},
\end{equation}
\begin{equation}\label{eqa4}
\Delta \psi_\lambda(p,q,\zeta)-c(p,q,\zeta)\psi_\lambda(p,q,\zeta)\geq 0,
\end{equation}
\end{subequations}
with 
$$c(p,q,\zeta)=
p+2     (|y(p,q,\zeta)|^2+|y(p,q,\zeta)||y(p',q',\zeta)|+|y(p',q',\zeta)|^2).$$
Next, for each $\lambda\in\R$ we consider the statement
\begin{equation}\label{statem}
\psi_\lambda(p,q,\zeta) <0, \quad \forall (p,q,\zeta)\in D_\lambda.
\end{equation}

We shall first establish Lemma \ref{movingplane} in the case where $\theta\in (0,\frac{\pi}{4})$.
According to Lemma \ref{lll2}, \eqref{statem} is valid for each $\lambda> s_\n$.
Set $\lambda_0=\inf\{\lambda\in \R: \psi_\mu<0 \text{ holds in $ D_\mu$, for each $\mu\geq \lambda$} \}$. We will prove $\lambda_0=-\infty$.
Assume instead $\lambda_0\in\R$. Then, there exist a sequence $\lambda_k<\lambda_0$ such that $\lim_{k\to\infty}\lambda_k=\lambda_0$, and a sequence
$(p_k,q_k,\zeta_k)\in D_{\lambda_k}$, such that 
\begin{equation}\label{assume}
y_2(p_k,q_k,\zeta_k)\geq y_2(p'_k,q'_k,\zeta_k), \forall k.
\end{equation}
According to Lemma \ref{lll2}, we have $p'_k\leq s_\n$, thus the sequence $(p_k,q_k)\subset\R^2$ is bounded, since by assumption $\theta\in(0,\pi/4)$.
Up to subsequence we may assume that $\lim_{k\to\infty}(p_k,q_k)=(p_0,q_0)$, with $p'_0\leq s_\n$.

We first examine the case where up to subsequence $\lim_{k\to\infty}\zeta_k=\zeta_0\in\R^{n-2}$. Note that $(p_0,q_0,\zeta_0)\in\overline{D_{\lambda_0}}$.
By definition of $\lambda_0$, we have $\psi_{\lambda_0}\leq 0$ in $D_{\lambda_0}$, and $\psi_{\lambda_0}(p_0,q_0,\zeta_0)=0$ i.e. $y_2(p_0,q_0,\zeta_0)=y_2(p'_0,q'_0,\zeta_0)$.
Now we distinguish the following cases. If $(p_0,q_0,\zeta_0)\in D_{\lambda_0}$, the maximum principle implies that $\psi_{\lambda_0}\equiv 0$ in $D_{\lambda_0}$. Clearly, this situation is excluded, since $y_2$ is positive
in the half-space $\{x_2>0\}$.
On the other hand, the maximum principle also implies that $\frac{\partial \psi_{\lambda_0}}{\partial \n}(p,q,\zeta)=2 \frac{\partial y_2}{\partial \n}(p,q,\zeta)>0$, provided that $(p,q,\zeta)\in \Gamma_{\lambda_0}$ and $q>0$.
Furthermore, the previous inequality still holds on the subspace $\{p=\lambda_0\}\cap\{q=0\}$, since $\frac{\partial y_{2}}{\partial x_2}(x_1,0,\zeta)>0$ and $\frac{\partial y_2}{\partial x_1}(x_1,0,\zeta)=0$ hold, for every $ x_1\in\R$, and $ \zeta\in\R^{n-2}$ (cf. the proof of Lemma \ref{lll2}).
As a consequence,$(p_0,q_0,\zeta_0)$ 
cannot belong to $\Gamma_{\lambda_0}$. Finally, since the case where $p_0>\lambda_0$ and $q_0=0$ is ruled out (because $y_2$ is positive in the half-plane $\{x_2>0\}$), we have reached a contradiction.

On the other hand, when $\lim_{k\to\infty}|\zeta_k|=\infty$, we have in view of \eqref{eqa1} and \eqref{eqa2}:
\begin{equation}
\frac{  y_2(p_k,q_k,\zeta_k)               }{y_2(p'_k,q'_k,\zeta_k)}=\frac{|y(p_k,q_k,\zeta_k)|q_k\sqrt{(q'_k)^2+|\zeta_k|^2}}{|y(p'_k,q'_k,\zeta_k)|q'_k\sqrt{q^2_k+|\zeta_k|^2}}\sim\frac{|y(p_k,q_k,\zeta_k)|q_k}{|y(p'_k,q'_k,\zeta_k)|q'_k},\text{ as $k\to\infty$}. 
\end{equation}
In addition, it follows from Lemma \ref{ass22}, that $\lim_{k\to\infty}\frac{|y(p_k,q_k,\zeta_k)|}{|y(p'_k,q'_k,\zeta_k)|}=\frac{h(p_0)}{h(p'_0)}$. Thus, since $p_0\geq p'_0$, $h(p'_0)\geq h(p_0)$, and $q'_0\geq q_0$ hold, the assumption $y_2(p_k,q_k,\zeta_k)\geq y_2(p'_k,q'_k,\zeta_k)$ implies that $p_0=p'_0$ and $q_0=q'_0$. Then, we notice that, $$ y_2(p'_k,q'_k,\zeta_k)- y_2(p_k,q_k,\zeta_k)=\sqrt{(p_k-p'_k)^2+(q_k-q'_k)^2}\frac{\partial y_2}{\partial \n}((p_k,q_k,\zeta_k)+t_k \n),$$ with $t_k\in(0,\sqrt{(p_k-p'_k)^2+(q_k-q'_k)^2})$, and 
$\frac{\partial y_2}{\partial \n}(x)\geq \frac{x_2}{|z|}\big(-\sin\theta\frac{\partial y_{\mathrm{rad}}}{\partial x_1}(x_1,|z|)+\cos\theta\frac{x_2}{|z|}\frac{\partial y_{\mathrm{rad}}}{\partial \sigma}(x_1,|z|)\big)$.
Setting $(p_k,q_k,\zeta_k)+t_k \n=:(\tilde p_k,\tilde q_k,\tilde \zeta_k)$, we deduce again from Lemma \ref{ass22} that 
\begin{align*}
\frac{\partial y_2}{\partial \n}(\tilde p_k,\tilde q_k,\tilde \zeta_k)\geq \frac{\tilde q_k}{\sqrt{\tilde q_k^2+|\tilde\zeta_k|^2}}(-h'(p_0)\sin\theta +o(1)).
\end{align*}
Therefore, we obtain $ y_2(p'_k,q'_k,\zeta_k)- y_2(p_k,q_k,\zeta_k)>0$, for $k$ large enough, which is a contradiction.
\begin{figure}[h]
\includegraphics{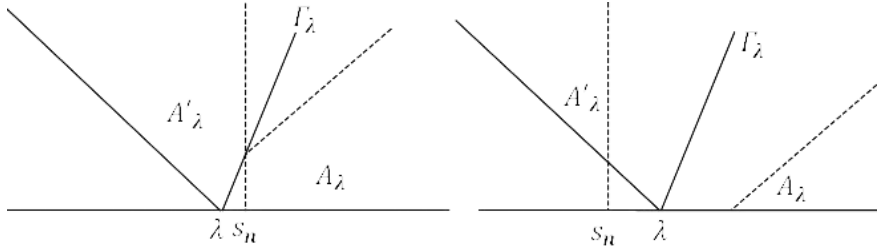}
\caption{The sets $A_\lambda$, $A'_\lambda$, and the subspace $\Gamma_\lambda$, in the cases where $\lambda<s_\n$ and $\lambda>s_\n$. }
\label{fig}
\end{figure}

Next, we establish Lemma \ref{movingplane} in the case where $\theta \in [\frac{\pi}{4},\frac{\pi}{2})$.  When $\theta=\frac{\pi}{4}$, it is clear that
\eqref{statem} is valid for each $\lambda>s_\n$. Otherwise, when $\theta \in (\frac{\pi}{4},\frac{\pi}{2})$, let $A'_\lambda:=\{(p',q',\zeta)\in D'_\lambda: p'\leq s_\n\}$, and let $A_\lambda=\rho_\lambda(A'_\lambda)$. Our first claim is that $m:=\inf_{A'_{s_\n+1}}|y| >0$. Indeed, proceeding as in the proof of Lemma \ref{lem44}, one can see that
\[
\lim_{x \in A'_{s_\n+1},x_1\to-\infty}\frac{\sqrt{2}}{\sqrt{-x_1}}|y(x)|=1.
\]
In addition, according to Lemma \ref{ass22}, we obtain that $\inf\{|y(x)|: x\in A'_{s_\n+1}, \, s_\n-l\leq x_1\leq s_\n\}>0$, for every constant $l>0$. Thus, $m>0$. On the other hand, we have $\lim_{\lambda\to\infty}\sup\{| y(x)|: x\in A_\lambda\}=0$, since $\lim_{\lambda\to\infty}\inf\{ x_1: x\in A_\lambda\}=0$. As a consequence when $\lambda\geq s_\n+1$ is large enough, we have 
\begin{equation}
\frac{  y_2(p,q,\zeta)               }{y_2(p',q',\zeta)}\leq \frac{|y(p,q,\zeta)|}{|y(p',q',\zeta)|}<1, \ \forall (p,q,\zeta)\in A_\lambda,
\end{equation}
and also $y_2(p',q',\zeta)<y_2(p,q,\zeta)$, $\forall (p,q,\zeta)\in D_\lambda\setminus A_\lambda$, by definition of $s_\n$. This establishes that \eqref{statem} holds for $\lambda$ large enough.
Then, defining $\lambda_0$ as previously, we assume by contradiction that $\lambda_0\in\R$, and there exist sequences $\lambda_k\to\lambda_0$ and $(p_k,q_k,\zeta_k)\in D_{\lambda_k}$ satisfying \eqref{assume}. We need to show that $(p_k,q_k)$ is bounded in $\R^2$. 
Indeed, if $\lim_{k\to\infty}p_k=\infty$, then we also have $\lim_{k\to\infty}q'_k=\infty$, as well as $p'_k\leq s_\n$, in view of \eqref{assume} and the definition of $s_n$. In particular, it follows from Lemma \ref{lem44} and Lemma \ref{ass22} (resp. from the bound $|y(x)|\leq h(x_1)$) that
$\liminf_{k\to\infty}|y(p'_k,q'_k,\zeta_k)|\geq h(s_\n)$ (resp. $\lim_{k\to\infty}|y(p_k,q_k,\zeta_k)|=0$). 
As a consequence, we obtain
\begin{equation}
\frac{  y_2(p_k,q_k,\zeta_k)               }{y_2(p'_k,q'_k,\zeta_k)}\leq \frac{|y(p_k,q_k,\zeta_k)|}{|y(p'_k,q'_k,\zeta_k)|}\to 0, \text{ as $k\to\infty$},
\end{equation}
which contradicts \eqref{assume}. Now that the boundedness of the sequence $(p_k,q_k)$ is established, to complete the proof we reproduce the arguments detailed in the case where $\theta\in (0,\frac{\pi}{4})$.

\end{proof}

Lemma \ref{movingplane} implies that $\forall \theta \in (0,\frac{\pi}{2})$, $\forall \lambda \in \R$, and $(p,q,\zeta)\in \Gamma_{\lambda}$ with $q>0$, we have
$\frac{\partial \psi_{\lambda}}{\partial \n}(p,q,\zeta)=2\frac{\partial y_2}{\partial \n}(p,q,\zeta)>0$, where $\n=(\cos(\theta+\frac{\pi}{2}), \sin(\theta+\frac{\pi}{2}),0,\ldots,0)$.
It follows that $\frac{\partial y_2}{\partial x_1}(x)\leq 0$, and $\frac{\partial y_2}{\partial x_2}(x)\geq 0$, provided that $x_2\geq 0$.
Moreover, in the half-space $x_2\geq 0$, $\frac{\partial y_2}{\partial x_1}$ and $\frac{\partial y_2}{\partial x_2}$ satisfy respectively $\Delta \frac{\partial y_2}{\partial x_1}\geq (x_1+6|y|^2)\frac{\partial y_2}{\partial x_1}$, and $\Delta \frac{\partial y_2}{\partial x_2}\leq (x_1+6|y|^2)\frac{\partial y_2}{\partial x_2}$, thus $\frac{\partial y_2}{\partial x_1}$ (resp. $\frac{\partial y_2}{\partial x_2}$) cannot vanish in the open half-space $x_2>0$, since otherwise we would obtain by the maximum principle $\frac{\partial y_2}{\partial x_1}\equiv 0$ (resp. $\frac{\partial y_2}{\partial x_2}\equiv 0$). These situations are excluded by Lemma \ref{lll1} and the fact that $y_2>0$ in the half-space $x_2>0$. Therefore we have proved that
$\frac{\partial y_{\mathrm{rad}}}{\partial x_1}(x_1,\sigma)=\frac{\partial y_2}{\partial x_1}(x_1,\sigma,0,\ldots,0)<0$, and $\frac{\partial y_{\mathrm{rad}}}{\partial \sigma}(x_1,\sigma)=\frac{\partial y_2}{\partial x_2}(x_1,\sigma,0,\ldots,0)>0$, provided that $\sigma>0$. 

Finally, we consider the rescaled map $\tilde y(t_1,\ldots,t_n)$ as in \eqref{ytilde}, with $z=0$, and proceed as in the proof of Lemma \ref{lem44}. Given a sequence $l_k\to -\infty$, one can see, that up to subsequence, $\tilde y_k(t_1,t_2,\ldots,t_n):=\tilde y(t_1+l_k,t_2,\ldots,t_n)$ converges in $C^1_{\mathrm{ loc}}(\R^n;\R^{n-1})$ to a solution $u(t_1,t_2,\ldots,t_n)$, $u:\R^n\to\R^{n-1}$, of $\Delta u=|u|^2-u$. Moreover, $u$ is by construction $O(n-1)$-equivariant with respect to $\tau:=(t_2,\ldots,t_n)$ (cf. \eqref{equivv}), and minimal for $O(n-1)$-equivariant perturbations. We also notice that \eqref{eqder3} and $y_{x_1}(x_1,r)\cdot \frac{r}{|r|}<0$, $\forall x_1\in\R$, $\forall r \in \R^{n-1}\setminus\{0\}$, imply that
$|t_1+l_k|^{\frac{2}{3}}\tilde y_{t_1}(t_1+l_k,t_2,\ldots,t_n)\cdot\frac{\tau}{|\tau|}+O(|t_1+l_k|^{-\frac{1}{3}})\leq 0$, $\forall t_1\in\R$, $\forall \tau\in\R^{n-1}\setminus\{0\}$. Passing to the limit as $k\to\infty$, we deduce that $u_{t_1}(t_1,\ldots,t_n)\cdot\frac{\tau}{|\tau|}\leq 0$, $\forall t_1\in\R$, $\forall \tau\in\R^{n-1}\setminus\{0\}$. As a consequence the limits $\lim_{t_1\to\pm\infty}u(t_1,t_2,\ldots,t_n)=:v^\pm(t_2,\ldots,t_n)$ exist, and one can see that $v^\pm:\R^{n-1}\to\R^{n-1}$ is an $O(n-1)$-equivariant solution of $\Delta v^\pm=|v^\pm|^2v^\pm-v^\pm$, which is minimal for $O(n-1)$-equivariant perturbations. That is, $v^\pm\equiv \eta$, where $\eta:\R^{n-1}\to\R^{n-1}$ is the standard vortex solution of the Ginzburg-Landau system \eqref{gl}. In addition, in view of the monotonicity of $u$ along the $t_1$ direction, we obtain that $u(t_1,\ldots,t_n)=\eta(t_2,\ldots,t_n)$, and since this limit is independent of the sequence $\{l_k\}$, we have established that \eqref{scale2} holds in the case where $z=0$. This completes the proof of Theorem \ref{corpain2}.

\section*{Acknowledgments}

The author would like to thank Micha{\l } Kowalczyk for several fruitful discusssions during a visit at the University of Chile. He was partially supported by the National Science Centre, Poland (Grant No. 2017/26/E/ST1/00817)


\begin{thebibliography}{10}

\bibitem{MR1149378}
Ablowitz, M.~J., Clarkson, P.~A.: Solitons, nonlinear evolution
  equations and inverse scattering. London Mathematical Society Lecture Note
  Series, vol. 149, Cambridge University Press, Cambridge, 1991. 

\bibitem{MR2062641}
Aftalion, A., Blanc, X.: Existence of vortex-free solutions in
  the {P}ainlev\'e boundary layer of a {B}ose-{E}instein condensate. J. Math.
  Pures Appl. (9) \textbf{83} (2004), no.~6, 765--801. 

\bibitem{MR2772375}
Aftalion, A., Jerrard, R.~L., Royo-Letelier, J.: Non-existence of vortices in the small density region of a condensate. J. Funct. Anal. \textbf{260} (2011), no.~8, 2387--2406. 







\bibitem{alikoakos_1}
Alikakos, N. D., Bates, P. W., Cahn, J. W., Fife, P.~C., Fusco, G., Tanoglu, G.~B.: Analysis of a corner
  layer problem in anisotropic interfaces. Discrete Contin. Dyn. Syst. Ser. B
  \textbf{6} (2006), no.~2, 237--255.

\bibitem{book}
Alikakos, N. D., Fusco, G., Smyrnelis, P.: Elliptic systems of phase transition type. Progress in Nonlinear Differential Equations and Their Applications \textbf{91}, Springer-Birkh{\"a}user (2018).



\bibitem{antonop}
Antonopoulos, P., Smyrnelis, P.: On minimizers of the Hamiltonian system $u''=\nabla W(u)$, and on the existence of heteroclinic, homoclinic and periodic orbits.
Indiana Univ.\ Math.\ J. \textbf{65} No.~5, 1503--1524 (2016)

\bibitem{clerc2}
Barboza, R., Bortolozzo, U.,  Davila, J.~D., Kowalczyk, M., Residori, S., Vidal Henriquez, E.: Light-matter interaction
  induces a shadow vortex. Phys. Rev. E \textbf{93} (2016), no.~5, 050201.


\bibitem{beres}
Berestycki, H., Caffarelli, L., Nirenberg,  L.: Monotonicity for elliptic equations in an unbounded Lipschitz domain. Comm. Pure Appl. Math. \textbf{50}, 1089-1112 (1997)



\bibitem{bethuel1}
Bethuel, F., Brezis, H., Helein, F.: Ginzburg-Landau vortices.
Progress in Nonlinear Differential Equations and Their Appl. \textbf{13}, Birkh\"{a}user, Basel and Boston (1994)

\bibitem{2005math.ph...8062C}
Claeys, T., Kuijlaars, A.~B.~J., Vanlessen, M.: Multi-critical
  unitary random matrix ensembles and the general Painleve II equation. Ann.
  of Math. (2) \textbf{167} (2008), 601--641.



\bibitem{Clerc2017}
Clerc, M.,  D{\'a}vila, J.~D., Kowalczyk, M., Smyrnelis, P., Vidal-Henriquez, E.: Theory of light-matter
  interaction in nematic liquid crystals and the second Painlev{\'e} equation.
  Calculus of Variations and Partial Differential Equations \textbf{56} (2017),
  no.~4, 93.

\bibitem{Clerc2018}
Clerc, M., Kowalczyk, M., Smyrnelis, P.: Symmetry
  breaking and restoration in the ginzburg--landau model of nematic liquid
  crystals. Journal of Nonlinear Science \textbf{28} (2018), no.~3,
  1079--1107.

\bibitem{panayotis_4}
Clerc, M., Kowalczyk, M., Smyrnelis, P.: Gradient theory of domain walls in thin, nematic liquid crystals films. To appear in Comm. in Contemporary Mathematics, arXiv:1809.01034

\bibitem{panos405}
Clerc, M., Kowalczyk, M., Smyrnelis, P.: The connecting solution of the Painlev\'e phase transition model. To appear in Annali della Scuola Normale Superiore di Pisa, Classe di Scienze, DOI 10.2422/2036-2145.201901012, arXiv:1807.05580

\bibitem{2006math.ph...3038D}
Deift, P.: Universality for mathematical and physical systems, ArXiv
  Mathematical Physics e-prints (2006).




\bibitem{evans}
Evans, L. C.: 
Partial differential equations.
Graduate Studies in Mathematics \textbf{19}, American Mathematical Society, second edition (2010)

\bibitem{farina}
Farina, A.: On the classification of entire local minimizers of the Ginzburg-Landau equation. In Recent Trends in Nonlinear Partial Differential Equations II: Stationary Problems. James B. Serrin, Enzo L. Mitidieri, and Vicentiu D. Radulescu, Editors.
Contemporary Mathematics Series (AMS) \textbf{595}, 231--236 (2013)



\bibitem{Flaschka1980}
Flaschka, H., Newell, A.~C.: Monodromy and spectrum-preserving
  deformations I, Communications in Mathematical Physics \textbf{76} (1980),
  no.~1, 65--116.

\bibitem{1987130}
Gilbarg, D., Trudinger, N.~S.: Elliptic partial differential equations of second order. Grundlehren der mathematischen Wissenschaften {\bf 224}, Springer-Verlag, Berlin, revised second edition, (1998).

\bibitem{MR555581}
Hastings, S.~P., McLeod, J.~B.: A boundary value problem associated with
  the second {P}ainlev\'e transcendent and the {K}orteweg-de\thinspace {V}ries
  equation. Arch. Rational Mech. Anal. \textbf{73} (1980), no.~1, 31--51.

\bibitem{helffer1998}
Helffer, B., Weissler, F.~B.: On a family of solutions of the
  second Painlev{\'e} equation related to superconductivity. European Journal
  of Applied Mathematics \textbf{9} (1998), no.~3, 223--243.

\bibitem{ignat}
Ignat, R., Millot, V.: The critical velocity for vortex existence in a two-dimensional rotating Bose-Einstein condensate. J. Funct. Anal. \textbf{233} no.~1, 260--306 (2006)


\bibitem{MR3355003}
Karali, G., Sourdis, C.: The ground state of a
  {G}ross-{P}itaevskii energy with general potential in the {T}homas-{F}ermi
  limit. Arch. Ration. Mech. Anal. \textbf{217} (2015), no.~2, 439--523.

\bibitem{KUDRYASHOV1997397}
Kudryashov, N.~A.: The second painlev{\'e} equation as a model for the electric field in a semiconductor, Physics Letters A \textbf{233} (1997), no.~4, 397 -- 400.



\bibitem{mironescu}
{P}etru {M}ironescu, \emph{{L}es minimiseurs locaux pour l'\'equation de
  Ginzburg-Landau sont \`a sym\'etrie radiale}, C. R. Math. Acad. Sci. Paris
  \textbf{323, S\'erie I} (1996), 593--598.

\bibitem{palais}
Palais, R. S.: The principle of symmetric criticality. Commun. Math. Phys. \textbf{69} no.~1, 19--30 (1979)



\bibitem{pisante}
Pisante, A.: Two results on the equivariant Ginzburg-Landau vortex in arbitrary dimension.
Journal of Functional Analysis \textbf{260} 892--905 (2011)




\bibitem{SS}
Sandier, E., Serfaty, S.:
Vortices in the magnetic Ginzburg-Landau model.
Progress in Nonlinear Differential Equations and their Applications \textbf{70}, Birkh\"{a}user (2007)





\bibitem{MR1267609} Itai Shafrir, \emph{Remarks on solutions of {$-\Delta u=(1-|u|^2)u$} in {${\bf
  R}^2$}}, C. R. Acad. Sci. Paris S\'er. I Math. \textbf{318} (1994), no.~4,
  327--331. \MR{1267609}

\bibitem{sourdis0}
{S}ourdis, {C}., Fife, P.~C.: Existence of heteroclinic orbits for
  a corner layer problem in anisotropic interfaces, Adv. Differential
  Equations \textbf{12} (2007), no.~623--668.

\bibitem {sourdis2}
{S}ourdis, {C}.: Painlev\'e-II profile of the shadow kink in the theory of light-matter interaction in nematic liquid crystals. Preprint arXiv:1811.07801.

\bibitem {sour}
{S}ourdis, {C}.: Energy minimality property of the connecting solution of the Painlevé phase transition model. Preprint arXiv:1905.04846.

\bibitem{troy1} {T}roy, W.: {T}he role of {P}ainlev\'e {I}{I} in predicting new liquid crystal self-assembly mechanism, Arch Rational Mech Anal (2018) \textbf{227}:367.

\end{thebibliography}
\end{document}